\theoremstyle{plain}
\newtheorem{theorem}{Theorem}
\theoremstyle{definition}
\newtheorem{example}[theorem]{Example}
\newtheorem{remark}[theorem]{Remark}
\newcommand{\dd}{\mathrm{d}}
\newcommand{\cotanh}{\mathrm{cotanh}}
\newcommand{\FF}{\mathbf{F}} 
\newcommand{\PP}{\mathbf{P}} 
\newcommand{\EE}{\mathbf{E}} 
\newcommand{\Order}{\mathscr{O}}
\newcommand{\threeasterisks}{\begin{center}$\ast\quad \ast$\\ $\ast$\end{center}}
\title[Curvature Functionals for Curves in the Equi-Affine Plane]{Curvature Functionals\\ \rule{0pt}{15pt}{\tiny for}\\ \rule{0pt}{17pt}Curves in the Equi-Affine Plane.}
\author{Steven Verpoort}
\dedicatory{(K.U.Leuven, Belgium, and  Masaryk University\,/\,Eduard \v{C}ech Center, Czech Republic.)}
\thanks{
The author, who was employed at K.U.Leuven during the commencement of this work, while he was supported by Masaryk University (Brno) during its conclusion, is thankful to both these institutions. 
This research was partially supported by 
the Research Foundation Flanders (project G.0432.07) and the Eduard \v{C}ech Center for Algebra and Geometry (Basic Research Center no. LC505).
}
\begin{document}

\maketitle

\begin{abstract}
After having given the general variational formula for the functionals indicated in the title, the critical points of the integral of the equi-affine curvature under area constraint and the critical points of the full-affine arc-length are studied in greater detail.\\
{\footnotesize \textsc{Key Words.} Curvature Functionals, Variational Problems, Affine Curves.}\rule{0pt}{12pt} \\
{\footnotesize \textsc{AMS 2010 Classification.} 49K05, 49K15, 53A15.}
 
\end{abstract}

\section{Preface.\!\!${}^1$}

\hspace{2\textwidth}\footnote{\selectlanguage{german}\textit{"'In Vorworten pflegen die Verfasser (wie Zahn\"arzte) die L\"ucken zu bohren, die sie 
sp\"ater stopfen wollen."`}\\ ${\,}^{\,}$ \hfill ---W. Blaschke \cite{blaschke_kin}. \selectlanguage{english}}

\selectlanguage{english}
One of the many striking features of W. Blaschke's landmark book ``\textit{Vorlesungen II}'' \cite{blaschke_vorlesungenII}, being the first treatise on equi-affine differential geometry which also at present day remains in multiple aspects the best introduction to the subject, is the close analogy between the development of the main body of the equi-affine theory and the exposition of classical differential geometry  \cite{blaschke_vorlesungenI}.
Although Blaschke showed a great interest in isoperimetric and variational problems, a rare topic which breaks this similarity is precisely the question of the infinitesimal change of a planar curvature functional, for Radon's problem is indeed covered with some detail in \cite{blaschke_vorlesungenI} whereas in \cite{blaschke_vorlesungenII} only the variation of the equi-affine arc-length is considered. 
In fact, even after having asked many colleagues, to whom I extend my gratitude, I have not been able to find \textit{any} article on equi-affine curvature functionals for planar curves (although centro-affine curvature functionals for planar curves are treated in \cite{huang} whereas \cite{mihailescu_2} covers a variational problem w.r.t.\ the full-affine group).

For this reason, I propose to study some basic variational problems for curves in the equi-affine plane.


\section{The General Variational Formula.}

Let $\gamma$ be a curve in the equi-affine plane $\mathbb{A}^2$ (which is equipped with a fixed area form $\left|\cdot,\cdot\right|$). It will be assumed that $\gamma$ is parametrised by \textit{equi-affine arc-length} $s$, which means that $\left|\gamma',\gamma''\right|=1$. 
A prime will always stand for a derivative w.r.t.\ the equi-affine arc-length parameter, \textit{i.e.}, we adopt the shorthand notation $\mu' = \frac{\partial \phantom{s}}{\partial s}\mu$ for any function $\mu$.
We will denote $\{T,\,N\}$ for the equi-affine Frenet frame consisting of the tangent vector  $T=\gamma'$ and the Blaschke normal $N=\gamma''$. The \textit{equi-affine curvature} $\kappa$ is defined by means of the equation
\begin{equation}
\label{eq:defkappa}
N'=-\kappa\,T
\qquad\qquad
\textrm{(or, equivalently, $\gamma'''=-\kappa\,\gamma'$\,).}
\end{equation}

In the course of this article, some remarks concerning invariants of curves with respect to the Euclidean, the full-affine or the projective group will be made. These invariants will bear an index $\EE$, $\FF$ or $\PP$. 

The position vector field $P$ (with respect to an arbitrarily chosen origin) can be written as $P=-\rho\,N+\phi\,T$, where the function $\rho$ is called the \textit{equi-affine support function}. By expressing the fact that $T=P'$ one finds two first-order differential equations for $\rho$ and $\phi$, which can be combined to give
\begin{equation}
\label{eq:rho_ode}
\rho'' + \kappa\,\rho = 1\,.
\end{equation}

It can easily be seen that for a deformation of a curve $\gamma$ with deformation vector field $f\,N+g\,T$, the equi-affine tangent of the deformed curve $\gamma_t$ is given by
\[
T_t(s) = T(s) - \frac{t}{3}\,\big( f''(s)+\kappa(s)\, f(s) \big)\,T (s) + t\,\big(\cdots\big)\,N(s) + \Order(t^2).
\]
Because the equi-affine curvature of the deformed curve $\gamma_t$ is precisely the centro-affine curvature of its equi-affine tangent image $T_t$, the variational formula for equi-affine curvature functionals can immediately be found from eq.\ (2.6) of \cite{huang}.
It is not necessary to calculate the component along $N$ of the deformation vector field of the tangent image (which is indicated by dots in the above formula), because it merely represents an infinitesimal reparametrisation of the tangent image.
Furthermore, it will be assumed that the deformation vector field is compactly supported such that no boundary terms occur. This requirement is automatically satisfied for closed curves. In this way the following variational formula is obtained, in which both $f$ and $\kappa$ should be evaluated in $s$:
\begin{equation}
\label{eq:delta_functional}
\delta \int F(\kappa)\,\dd s =
\frac{1}{3}\int f \left[\left\lgroup
\frac{\partial^2\ }{\partial s^2} + \kappa
\right\rgroup\!\!\left\lgroup
F'''(\kappa)\,(\kappa')^2
+ F''(\kappa)\,\kappa''
+4\,F'(\kappa)\,\kappa
-2\,F(\kappa)
\right\rgroup\right ]\,\dd s\,.
\end{equation}
For every curve $\gamma=(x,y)$, the general solution of the differential equation $\left\lgroup\frac{\partial^2\ }{\partial s^2} + \kappa\right\rgroup\xi=0$ is given by $\xi=A\,x'+B\,y'$. Therefore the Euler--Lagrange equation can immediately be integrated twice, with the following result:

\begin{theorem}
\label{thm:intFkappa}
A curve $\gamma=(x,y)$ in the affine plane is a critical point of the equi-affine curvature functional $\int F(\kappa)\,\dd s$ if and only if
\[
F'''(\kappa)\,(\kappa')^2
+ F''(\kappa)\,\kappa''
+4\,F'(\kappa)\,\kappa
-2\,F(\kappa)
=
A\,x'+B\,y'
\]
for some $A$, $B$ in $\mathbb{R}$.
\end{theorem}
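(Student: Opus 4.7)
The plan is to combine the variational formula \eqref{eq:delta_functional} with the fundamental lemma of the calculus of variations, and then to invoke the description of the kernel of the operator $\frac{\partial^2}{\partial s^2} + \kappa$ asserted in the paragraph preceding the theorem.

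First I would abbreviate
\[
\Phi := F'''(\kappa)\,(\kappa')^2 + F''(\kappa)\,\kappa'' + 4F'(\kappa)\,\kappa - 2F(\kappa)
\]
and, starting from \eqref{eq:delta_functional}, observe that the critical-point condition $\delta \int F(\kappa)\,\dd s = 0$ for every compactly supported deformation is equivalent, via the standard fundamental lemma, to the pointwise Euler--Lagrange equation
\[
\left(\frac{\partial^2}{\partial s^2} + \kappa\right)\Phi = 0.
\]
The tangential component of the deformation merely reparametrises $\gamma$ and contributes nothing, which is why it was already absent from \eqref{eq:delta_functional}; so considering arbitrary test functions $f$ in the normal direction suffices.

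Next I would verify the claim that the general solution of $\xi'' + \kappa\,\xi = 0$ is $\xi = A\,x' + B\,y'$. Writing the Blaschke equation $\gamma''' = -\kappa\,\gamma'$ componentwise gives $(x')'' + \kappa\, x' = 0$ and $(y')'' + \kappa\, y' = 0$, so both $x'$ and $y'$ lie in the solution space. Their Wronskian is
\[
x'\,y'' - y'\,x'' = \left|\gamma',\gamma''\right| = 1,
\]
so they are linearly independent; since the solution space of a linear second-order ODE is two-dimensional, every solution has the form $\xi = A\,x' + B\,y'$ with $A, B \in \mathbb{R}$.

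Combining the two observations yields $\Phi = A\,x' + B\,y'$ for some constants $A, B$, which is the statement of the theorem. The substantive analytic work is already absorbed in the derivation of \eqref{eq:delta_functional}; the only remaining step is the identification of the kernel of $\partial_s^2 + \kappa$, and the main (rather minor) obstacle there is just the Wronskian computation, which is immediate from the equi-affine arc-length normalisation $\left|\gamma',\gamma''\right| = 1$.
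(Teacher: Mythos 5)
Your proposal is correct and follows essentially the same route as the paper: the paper likewise reads off the Euler--Lagrange equation $\bigl(\partial_s^2+\kappa\bigr)\Phi=0$ from \eqref{eq:delta_functional} and then integrates it using the fact that the kernel of $\partial_s^2+\kappa$ is spanned by $x'$ and $y'$. Your explicit Wronskian check $x'\,y''-y'\,x''=\left|\gamma',\gamma''\right|=1$ merely fills in the verification the paper leaves implicit.
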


We will now consider two special instances of the above variational formula, namely, the functionals 
$\int \kappa\,\dd s$  (\S\,\ref{sec:totcurv}) and $\int \sqrt{\kappa}\,\dd s$ (\S\,\ref{sec:sqrtcurv}).


\section{The Total Equi-Affine Curvature.}
\label{sec:totcurv}

Unlike in Euclidean differential geometry, the total equi-affine curvature $\int \kappa\,\dd s$ of a closed curve is not topologically 
invariant,\!\!\footnote{In fact, not any non-zero functional of the form $\int F(\kappa)\,\dd s$ is a topological invariant, as can be deduced from the fact that for no non-zero function $F$ the right-hand side of (\ref{eq:delta_functional}) vanishes identically.} 
which makes the variational problem worthy of study. 
The variational formula
\begin{equation}
\label{eq:delta_int_kappa}
\delta \int \kappa\,\dd s \,=\, \frac{2}{3} \int f\, \big(\kappa''+\kappa^2\big)\,\dd s\,,
\end{equation}
can be derived as well from formula (4.1.12) of \cite{li_etal}, p.\ 213 (see also \cite{li1988}).

\begin{theorem}
\label{thm:intkappa}
A curve $\gamma=(x,y)$ in the affine plane is a critical point of $\int \kappa\,\dd s$ if and only if
\begin{equation}
\label{eq:AxByCrho}
\kappa = A\,x'+B\,y'
\end{equation}
for appropriate constants $A$, and $B$, \textit{i.e.}, the centro-affine curvature of the tangent image described by $T$ is a linear function of its position vector field.
\end{theorem}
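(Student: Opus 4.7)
The plan is to apply Theorem~\ref{thm:intFkappa} with the choice $F(\kappa)=\kappa$. Then $F'(\kappa)=1$ while $F''(\kappa)=F'''(\kappa)=0$, so the bracketed Euler--Lagrange expression in Theorem~\ref{thm:intFkappa} collapses to $4\kappa-2\kappa=2\kappa$, and the criticality condition becomes $2\kappa=Ax'+By'$. Absorbing the factor $2$ into the constants yields the desired identity $\kappa=Ax'+By'$.

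An equally natural route starts directly from the specialised variational formula (\ref{eq:delta_int_kappa}). Requiring $\delta\int\kappa\,\dd s=\tfrac{2}{3}\int f(\kappa''+\kappa^2)\,\dd s$ to vanish for every compactly supported normal variation $f$ gives the pointwise Euler--Lagrange equation $\kappa''+\kappa^2=0$, which amounts to the statement that $\kappa$ itself lies in the kernel of the operator $\partial_s^2+\kappa$. By equation (\ref{eq:defkappa}), each Cartesian coordinate of $\gamma$ satisfies $\mu'''+\kappa\mu'=0$, so $x'$ and $y'$ are two solutions of $\xi''+\kappa\xi=0$; they are linearly independent (otherwise $\gamma$ would degenerate, contradicting $|\gamma',\gamma''|=1$) and therefore span the two-dimensional solution space of this linear ODE. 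Hence $\kappa=Ax'+By'$ for suitable constants $A,B\in\mathbb{R}$.

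No substantive obstacle is expected, as the theorem is essentially an immediate corollary of Theorem~\ref{thm:intFkappa}; the mildest point is only to observe that the two derivations above yield the same conclusion up to a harmless renaming of constants. The geometric reading appended to the statement -- that the centro-affine curvature of the tangent image is a linear function of its position vector field -- is then merely a restatement of the identification, already made in the passage preceding formula (\ref{eq:delta_functional}), of the equi-affine curvature of $\gamma$ with the centro-affine curvature of the tangent image $T=\gamma'=(x',y')$, whose position vector in the plane is precisely $(x',y')$.
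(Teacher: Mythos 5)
Your proposal is correct and matches the paper's own (implicit) argument: the paper states this theorem as the immediate specialisation $F(\kappa)=\kappa$ of Theorem~\ref{thm:intFkappa}, whose proof rests on exactly the fact you invoke, namely that the kernel of $\partial_s^2+\kappa$ is spanned by $x'$ and $y'$ (their Wronskian being $\left|\gamma',\gamma''\right|=1$). Your two routes are the same computation presented twice, and the absorption of the factor $2$ into the constants is harmless, so nothing further is needed.
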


\begin{theorem}
\label{thm:intkappaarea}
A curve $\gamma=(x,y)$ in the affine plane is a critical point of $\int \kappa\,\dd s$ under area constraint, without being an unconstrained critical point of the total equi-affine curvature, if and only if the origin can be chosen in such a way that the equi-affine support function becomes a non-zero multiple of the equi-affine curvature.
\end{theorem}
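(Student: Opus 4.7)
The plan is to treat this as a Lagrange multiplier problem against the area functional. The only new ingredient needed beyond (\ref{eq:delta_int_kappa}) is the first variation of the enclosed area under a deformation $fN+gT$. I would write $A = \tfrac{1}{2}\int |\gamma,\gamma'|\,\dd s$, differentiate under the integral sign, and integrate by parts (boundary terms vanishing because the deformation is compactly supported or the curve is closed) to obtain $\delta A = \int|fN+gT,T|\,\dd s = -\int f\,\dd s$, using $|T,N|=-1$.

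Combining this with (\ref{eq:delta_int_kappa}), the constrained criticality condition $\delta\int\kappa\,\dd s + \lambda\,\delta A = 0$ for every admissible $f$ collapses to $\kappa''+\kappa^2 = \mu$ for some real constant $\mu$ proportional to $\lambda$. Since the unconstrained Euler--Lagrange equation underlying Theorem~\ref{thm:intkappa} is exactly $\kappa''+\kappa^2 = 0$, the hypothesis ``constrained critical point, but not an unconstrained one'' is equivalent to $\mu\ne 0$. What remains is to translate $\kappa''+\kappa^2 = \mu\ne 0$ into the stated property of the support function.

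I expect this translation to be the conceptual core of the argument. Setting $c = 1/\mu$, a direct computation shows that $c\kappa$ satisfies the same inhomogeneous equation (\ref{eq:rho_ode}) as $\rho$, so $\rho - c\kappa$ lies in the kernel of $\partial_s^2 + \kappa$ and, by the characterisation of solutions of $(\partial_s^2+\kappa)\xi=0$ already invoked in the excerpt, must equal $Ax'+By'$ for some constants $A,B$. Since a shift of the origin by $v = (v_1,v_2)$ alters $\rho$ by $-v_1 y' + v_2 x'$, the choice $v = (B,-A)$ relocates the origin so that $\rho = c\kappa$, a non-zero multiple of $\kappa$. The converse is immediate: plugging $\rho = c\kappa$ with $c\ne 0$ into (\ref{eq:rho_ode}) yields $\kappa''+\kappa^2 = 1/c$, certifying that $\gamma$ is a constrained-but-not-unconstrained critical point. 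The only real subtlety I foresee is keeping signs straight in $\delta A$ and in the change-of-origin formula; everything substantive rides on the same kernel description of $(\partial_s^2+\kappa)\xi=0$ that powered Theorem~\ref{thm:intkappa}.
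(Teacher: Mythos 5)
Your argument is correct, and it reaches the conclusion by a route that genuinely differs from the paper's in its key step, although the two are closely parallel. Both proofs start the same way: $\delta\,\mathrm{Area}=-\int f\,\dd s$ combined with (\ref{eq:delta_int_kappa}) yields the Euler--Lagrange equation $\kappa''+\kappa^2=C$, with $C\neq 0$ precisely when the curve is not an unconstrained critical point. To convert this into the statement about the support function, the paper introduces the vector field $M=\kappa\,N-\kappa'\,T$ along the curve, notes that $M'=-(\kappa''+\kappa^2)\,T=-C\,P'$, so that $M+C\,P$ is a constant vector, and translates the origin so that this vector vanishes; then $\kappa=-\left|M,T\right|=C\left|P,T\right|=C\,\rho$ follows at once. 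You instead observe that $\kappa/C$ solves the same inhomogeneous equation $\xi''+\kappa\,\xi=1$ as $\rho$ in (\ref{eq:rho_ode}), so that $\rho-\kappa/C$ lies in the kernel of $\partial_s^2+\kappa$, which is spanned by $x'$ and $y'$ (the same lemma that powers Theorems \ref{thm:intFkappa} and \ref{thm:intkappa}), and that an origin shift changes $\rho$ by exactly such a combination. The two arguments are equivalent---the constant vector $M+C\,P$ is, up to the factor $C$, the translation encoded by your coefficients $A,B$---but yours reuses machinery already established and makes the converse, hence the full ``if and only if,'' explicit, which the paper leaves implicit, while the paper's conserved-vector formulation pins down the distinguished origin more geometrically. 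One immaterial slip: with the paper's normalisation $\left|\gamma',\gamma''\right|=1$ one has $\left|T,N\right|=+1$, not $-1$; the formula $\delta\,\mathrm{Area}=-\int f\,\dd s$ you arrive at is nonetheless the one the paper uses, and any residual sign would in any case be absorbed by the Lagrange multiplier.
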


\begin{proof}
We recall that, under a deformation with deformation vector field $f\,N+g\,T$, the area bounded by the curve changes according to $\delta\,\textrm{Area}=-\int f \, \dd s$. Consequently, the Euler--Lagrange equation which is satisfied by the curve reads
\begin{equation}
\label{eq:odekappa}
\kappa''+ \kappa^2 = C
\end{equation}
for some non-zero constant $C$. Now consider the vector field 
\[
M=\kappa\,N - \kappa'\,T
\]
along the curve. It follows from (\ref{eq:odekappa}) that $M+C\,P$ is a constant vector field, which becomes zero after a suitable translation has been applied. We then have 
$\kappa=-\left|M,T\right|=C\left|P,T\right|=C\,\rho$.
\end{proof}

The Theorem~\ref{thm:intkappaarealength} below, and perhaps Theorems~\ref{thm:intFkappa},~\ref{thm:intkappa},~\ref{thm:intkappaarea} and \ref{thm:intsqrt} of this article as well, are reminiscent to the known result that a curve in $\mathbb{E}^2$ is a critical point of the Bernoulli--Euler bending energy $\int  (\kappa_{\mathbf{E}})^2\,\dd s_{\mathbf{E}}$ under constrained area and arc-length  (without being a critical point of this variational problem if the area constraint is relaxed) if and only if the Euclidean curvature can be written as $\kappa_{\mathbf{E}}=A+B\,\|\gamma\|^2$ for some choice of origin and some $A,B$ with $B\neq 0$ (\cite{arr_capo_chry_guven}, eq.\ (38)).

\begin{theorem}
\label{thm:intkappaarealength}
A curve $\gamma=(x,y)$ in the affine plane is a critical point of $\int \kappa\,\dd s$ under constrained area and total equi-affine length (without being a critical point of $\int \kappa\,\dd s$ under arc-length constraint only) if and only there holds $\kappa = C + A\,\rho$ for some $A,C$ (with $C\neq 0$) and some choice of origin.
\end{theorem}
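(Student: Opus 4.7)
The plan is to derive the Euler--Lagrange equation for the doubly constrained problem, rewrite it as an inhomogeneous linear ODE for a suitably normalised shift of $\kappa$, and then choose the origin so that this function coincides with the equi-affine support function $\rho$.

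First I would obtain the missing variational formula by applying (\ref{eq:delta_functional}) to $F\equiv 1$, which yields
\[
\delta\int \dd s \,=\, -\tfrac{2}{3}\int f\,\kappa\,\dd s\,.
\]
Combined with (\ref{eq:delta_int_kappa}) and the area variation $\delta\,\textrm{Area}=-\int f\,\dd s$ recalled in the proof of Theorem~\ref{thm:intkappaarea}, introducing Lagrange multipliers $\lambda$ for the area and $\mu$ for the length produces the Euler--Lagrange equation
\[
\kappa'' + \kappa^2 - \mu\,\kappa \,=\, \tfrac{3}{2}\,\lambda\,.
\]
The parenthetical exclusion in the statement forces the area multiplier to be non-zero.

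Next I would bridge this ODE to the support function. Writing $a=\mu$, $b=\tfrac{3}{2}\lambda$ and setting $\sigma := (\kappa-a)/b$, a one-line substitution shows
\[
\sigma'' + \kappa\,\sigma \,=\, \frac{\kappa''+\kappa^2-a\kappa}{b} \,=\, 1\,,
\]
so $\sigma$ satisfies the same inhomogeneous equation (\ref{eq:rho_ode}) that characterises every equi-affine support function.

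The main obstacle is to verify that $\sigma$ is actually realised as the support function of $\gamma$ with respect to \emph{some} origin. To do so I would fix a provisional origin with support function $\rho_0$; the difference $\sigma-\rho_0$ is then annihilated by $\frac{\partial^2}{\partial s^2}+\kappa$, and by the observation preceding Theorem~\ref{thm:intFkappa} must take the form $\alpha\,x'+\beta\,y'$ for some $\alpha,\beta\in\mathbb{R}$. A short calculation based on the identity $\rho = y'x-x'y$ shows that a translation of the origin by $(-\beta,\alpha)$ adds precisely $\alpha\,x'+\beta\,y'$ to $\rho_0$; in the translated frame the support function coincides with $\sigma$, giving $\kappa = a + b\,\rho = C+A\rho$. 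The converse is immediate: differentiating $\kappa=C+A\rho$ twice and substituting (\ref{eq:rho_ode}) recovers the Euler--Lagrange equation displayed above, with the non-vanishing constant corresponding to a non-trivial area multiplier.
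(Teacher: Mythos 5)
Your proposal is correct, and it arrives at the same Euler--Lagrange equation $\kappa''+\kappa^2=C+A\,\kappa$ (with the area multiplier forced to be non-zero by the parenthetical exclusion), but it integrates that equation by a different mechanism than the paper. The paper's proof is the one-line geometric argument already used for Theorem~\ref{thm:intkappaarea}: it introduces the vector field $M=\kappa\,N-\kappa'\,T-A\,N$ along the curve, observes from the Frenet equations that $M'=-C\,T=-C\,P'$, so that $M+C\,P$ is a constant vector field which a translation annihilates, and then reads off $\kappa-A=C\,\left|P,T\right|=C\,\rho$ from the area form. You instead work on the level of the scalar ODE: you note that $\sigma=(\kappa-a)/b$ satisfies the same inhomogeneous equation $\sigma''+\kappa\,\sigma=1$ as the support function (\ref{eq:rho_ode}), that the difference from a provisional support function lies in the kernel of $\frac{\partial^2}{\partial s^2}+\kappa$, hence equals $\alpha\,x'+\beta\,y'$ by the observation preceding Theorem~\ref{thm:intFkappa}, and that a translation of the origin shifts $\rho$ by exactly such a combination (your computation $\rho=x\,y'-y\,x'$ and the shift by $(-\beta,\alpha)$ are correct). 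The two routes are essentially equivalent in content --- the paper's conserved vector field $M+C\,P$ encodes the same two constants of integration as your $(\alpha,\beta)$ --- but yours makes explicit the translation law of the support function, which the paper leaves implicit, at the cost of having to derive $\delta\int\dd s=-\tfrac{2}{3}\int f\,\kappa\,\dd s$ from (\ref{eq:delta_functional}) with $F\equiv 1$ (which is fine, and consistent with the proportionality quoted in \S\,\ref{sec:sqrtcurv}). One small caveat: in your last line the constants are named so that the non-vanishing one appears as the constant term, whereas both your derivation and the paper's proof actually place the non-vanishing condition on the coefficient multiplying $\rho$ (the area multiplier); this mismatch is inherited from the wording of the theorem itself, whose proof yields $\kappa=A+C\,\rho$ with $C\neq 0$, so it is a labelling infelicity rather than a gap in your argument.
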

\begin{proof}
This follows similarly from the equations $\kappa''+\kappa^2=C+A\,\kappa$ (with $C\neq 0$) and $M=\kappa\,N-\kappa'T-A\,N$.
\end{proof}

\begin{theorem}
The ellipses are the only closed curves which are a critical point of the functional $\int\kappa\,\dd s$ with respect to deformations under which the equi-affine arc-length is preserved.
\end{theorem}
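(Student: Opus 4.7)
The plan is to derive the Euler--Lagrange equation for this constrained problem, find a first integral in the spirit of Theorems~\ref{thm:intkappaarea} and~\ref{thm:intkappaarealength}, and then exploit closedness through an integral identity.

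First, specialising (\ref{eq:delta_functional}) to $F\equiv 1$ gives $\delta\int\dd s = -\tfrac{2}{3}\int f\kappa\,\dd s$, so preserving arc-length translates into the linear constraint $\int f\kappa\,\dd s = 0$ on the normal component of the deformation field. Combining this constraint with (\ref{eq:delta_int_kappa}) by Lagrange multipliers, I expect the Euler--Lagrange equation to be $\kappa''+\kappa^2 = A\kappa$ for some $A\in\mathbb{R}$. (The value $A=0$ is excluded immediately, because the phase-plane trajectories of $\kappa''+\kappa^2=0$ are unbounded and admit no periodic orbit, so such a critical curve cannot close up.) Following the pattern of Theorem~\ref{thm:intkappaarea}, I would then examine the vector field $M=\kappa N-\kappa'T$. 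A short Frenet computation yields $M'=-(\kappa''+\kappa^2)T=-A\kappa T$; the key observation is that $-\kappa T = N'$, so $M'=AN'$ and $M-AN$ is a constant vector $c$ along the curve. Pairing the identity $c=(\kappa-A)N-\kappa'T$ with $T$ through the area form gives $\kappa = A + |T,c|$.

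Closedness then forces $c=0$. For a closed curve of equi-affine length $L$, integration of $T=\gamma'$ yields $\int T\,\dd s = 0$, hence $\int\kappa\,\dd s = AL$. Integrating the Euler--Lagrange equation over the curve gives $\int\kappa^2\,\dd s = A\int\kappa\,\dd s = A^2L$. But squaring $\kappa=A+|T,c|$ and integrating directly yields
\[
\int\kappa^2\,\dd s \;=\; A^2 L + \int|T,c|^2\,\dd s,
\]
so $|T,c|^2$ has vanishing integral and must be identically zero. If $c\neq 0$ this would force $T$ to be everywhere parallel to $c$, contradicting $|T,N|=1$; hence $c=0$ and $\kappa\equiv A$. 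The standard integration of $\gamma'''=-A\gamma'$ then identifies the closed solutions (only $A>0$ yields a closed orbit) as ellipses.

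The main obstacle will be spotting the ansatz $M-AN$: unlike in Theorem~\ref{thm:intkappaarea}, where the absorbing correction is a multiple of the position vector $P$ because $P'=T$, here one must absorb the right-hand side $-A\kappa T$ by exploiting the identity $N'=-\kappa T$. Once this ansatz is in place, the remainder is routine integration by parts together with the closedness condition $\int T\,\dd s=0$.
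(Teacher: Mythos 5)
Your proof is correct, and it reaches the paper's conclusion by a somewhat different route. The Euler--Lagrange equation $\kappa''+\kappa^2=A\,\kappa$ and the concluding step --- showing in effect that $\int(\kappa-A)^2\,\dd s=0$, hence $\kappa\equiv A$ and the curve is an ellipse --- coincide with the paper's. The difference is in how the key identity $\int\kappa\,\dd s=A\int\dd s$ is obtained: the paper multiplies the support-function equation (\ref{eq:rho_ode}) by $\kappa$ and integrates by parts, never leaving the level of integral identities, whereas you first integrate the Euler--Lagrange equation pointwise via the constant vector $c=(\kappa-A)\,N-\kappa'\,T$ (the same ansatz the paper employs in the proof of Theorem~\ref{thm:intkappaarealength}), obtaining $\kappa=A+\left|T,c\right|$, and then use the closedness relations $\int T\,\dd s=0$ (and $\int\kappa''\,\dd s=0$). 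Your route avoids the support function entirely and yields, in passing, the pointwise first integral $\kappa=A+\left|T,c\right|$, with the squared-integral argument then forcing $c=0$ directly; the paper's route is a bit shorter since it needs no first integral at all. Your parenthetical exclusion of $A=0$ is unnecessary --- the final argument already handles it, since $\kappa\equiv 0$ gives a parabola, which is not closed --- but it is harmless, and your justification (the cubic $(\kappa')^2=-\tfrac{2}{3}\kappa^3+\mathrm{const}$ has no bounded branch) is sound.
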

\begin{proof}
Let $\gamma$ be a closed curve which solves the variational problem as stated in the theorem. From the Euler--Lagrange equation
\begin{equation}
\label{eq:EL}
\kappa''+\kappa^2 = A\,\kappa \qquad (A\in\mathbb{R})
\end{equation}
can be inferred that
\[
\int \kappa^2\,\dd s 
= \int \big(\kappa''+\kappa^2\big)\,\dd s = A \int \kappa\,\dd s\,.
\]
From (\ref{eq:rho_ode}) we deduce that 
\[
\int \kappa\,\dd s 
= \int \kappa \big(\rho''+\kappa\,\rho\big)\,\dd s
= \int \big(\kappa''+\kappa^2\big)\,\rho\,\dd s 
= A \int \rho\,\kappa\,\dd s
= A \int \dd s\,.
\]
A combination of the previous two equations results in
\[
\int (\kappa-A)^2\,\dd s
=
\int \kappa^2\,\dd s -2\,A \int \kappa\,\dd s + \int A^2\,\dd s
=
(A^2-2A^2+A^2)\int\dd s 
=
0\,,
\]
which means that $\kappa$ is equal to the constant $A$.
\end{proof}

\begin{remark}
It has been shown in \cite{blaschke_vorlesungenII}, \S\,27.24 that every simple closed curve in the affine plane satisfies 
\[
\int \kappa\,\dd s \,\sqrt[3]{\mathrm{Area}}\leqslant 2\,\pi^{\frac{4}{3}}\,,
\] 
and equality occurs precisely for the ellipses. By combining this equation with the Blaschke inequality, it follows that 
\begin{equation}
\label{eq:intkappaints}
\int\kappa\,\dd s  \int \dd s \leqslant  4\,\pi^{2}\,,
\end{equation}
in which equality is also characteristic to the ellipses.
\end{remark}

\threeasterisks

We now intend to give a full classification of all curves which are a critical point of $\int\kappa\,\dd s$ under area constraint. Our first observation is that the Euler--Lagrange equation (\ref{eq:odekappa}) can be integrated. Indeed, if the notation $f=\frac{-1}{6}\kappa$, $g_2=\frac{C}{3}$ is adopted, whereas $g_3$ stands for an integration constant, the following requirement on
the function $f$ and the constants $g_2$, $g_3\in\mathbb{R}$ is found:
\begin{equation}
\label{eq:odekappabis}
(f')^2 = 4\,f^3-g_2\,f-g_3
\qquad
\textrm{(or equivalently, $(\kappa')^2=-\frac{2}{3}\,\kappa^3+6\,g_2\,\kappa-36\,g_3$)\,.}  
\end{equation}

The constant $g_2$ vanishes if and only if the curve is a critical point of the total equi-affine curvature without constraint. In the assumption $g_2\neq 0$, the constant $g_3$ vanishes if and only if the curve traced out by the vector $\beta = \big({1}\big/{\sqrt{|\kappa|}}\big)P$ (the position vector field $P$ being chosen as in Theorem~\ref{thm:intkappaarea}) is a straight line. This last fact can be immediately found from the formula
$\beta''=-27\,\frac{g_3}{\kappa^2}\,\beta$.

Furthermore, it may be instructive to have in mind the picture of the phase plane $(\kappa,\kappa')$, where the equation (\ref{eq:odekappabis}) determines a cubic $\mathcal{C}$ which is symmetric w.r.t.\ the horizontal axis which it intersects in precisely three distinct points if the cubic discriminant $\Delta = (g_2)^3-27\,(g_3)^2$ is strictly positive, and in precisely one point if this discriminant is strictly negative (see Figure~\ref{fig:cubicC}). While a point in the affine plane describes a curve $\gamma$ which solves the variational problem, the corresponding point in the phase plane will describe a part of this cubic $\mathcal{C}$. 

\noindent\framebox{\textsc{Generic Case}. $g_2\neq 0$ and $\Delta\neq 0$.}\raisebox{-9pt}{\rule{0pt}{24pt}}\ \ 
Let us first consider this generic case, which will be split into Cases (A), (B) and (C) afterwards.
It is known that complex constants $\omega_1$ and $\omega_2$ can be found, which are not proportional over $\mathbb{R}$, for which
\[
g_2 =  \sum \frac{60}{(2\,m_1\,\omega_1+2\,m_2\,\omega_2)^4}
\quad
\textrm{and}
\quad
g_3 = \sum \frac{140}{(2\,m_1\,\omega_1+2\,m_2\,\omega_2)^6}\,,
\] 
the summations being taken over $m_1,m_2\in\mathbb{Z}^2\setminus\{(0,0)\}$ (see \cite{whittaker_watson}, \S\,20.22; 21.73). Moreover if $\wp$ denotes the Weierstrass elliptic function with half-periods $\omega_1$ and $\omega_2$ and invariants $g_2$, $g_3$, then the only complex functions $f$ which solve equation (\ref{eq:odekappabis}) are $z\mapsto \wp(z_0\pm z)$ (for $z_0\in\mathbb{C}$).

We remark that the restrictions of $\wp$ to the real and the imaginary line are periodic, and will denote their periods as $2\,\varpi_1$ (a strictly positive number) resp. $2\,\varpi_2$ (a strictly positive multiple of $i$)\footnote{%
Compare \cite{ab_st}, Ch. 18, and \cite{whittaker_watson}, p. 444, Ex. 1. 
If $\Delta > 0$, the grid of poles of $\wp$ is rectangular and the standard choice of half-periods is $\omega_1=\varpi_2$ and $\omega_2=\varpi_1+\varpi_2$. If the cubic discriminant $\Delta$ is strictly negative, this grid is rhombic and the standard half-periods are given by $\omega_1=\frac{1}{2}(\varpi_1+\varpi_2)$ and $\omega_2=\varpi_1$.
}\!\!.

Consider any parameter value $s_0$, and take a point $z_0\in\mathbb{C}$ for which $\wp(z_0)=f(s_0)$. Since both $f$ and $\wp$ satisfy the equation (\ref{eq:odekappabis}), we have $\wp'(z_0)=\pm f'(s_0)$. After possibly having reverted the equi-affine arc-length parameter, it can be assumed that the plus sign appears in this equation. Then, by integration of 
(\ref{eq:odekappabis}) for both $\wp$ and $f$, we find that $\wp(z_0+s)=f(s_0+s)$ for all real numbers $s$ in a neighbourhood of the origin. We conclude that
\[
f(s)=\wp(s-c_0)
\]
holds for all real $s$ in a certain interval and for a certain fixed number $c_0$, which can be assumed to be a pure imaginary.
Since $\wp(s-c_0)$ is a real number whenever $s$ is real, there are merely two different situations to consider: either $c_0=\varpi_2$ or $c_0=0$.

If $c_0=\varpi_2$ and $\Delta>0$, the curvature function will oscillate between two values (say, $q=\textrm{min}\,\kappa$ and $Q=\textrm{max}\,\kappa$) and the image in the phase-plane describes the closed branch of the cubic $\mathcal{C}$. Whenever the equi-affine curvature has the value $q$ or $Q$, the derivative $\kappa'$ will vanish, and since this derivative is given by (\ref{eq:odekappabis}), the numbers $g_2$, $g_3$ can be expressed in terms of $q$ and $Q$:
\begin{equation}
\label{eq:g2g3qQ}
g_2 = \frac{1}{9}\big(q^2+Q^2+q\,Q\big)
\qquad
\textrm{and}
\qquad
g_3 = \frac{1}{54}\big(q^2\,Q+q\,Q^2\big)\,.
\end{equation}

\newcommand{\figwid}{0.193\textwidth}
\newcommand{\unitwid}{0.0125\textwidth}
\begin{figure}
\begin{center}
\includegraphics[bb=50 0 500 150,width=0.8\textwidth]{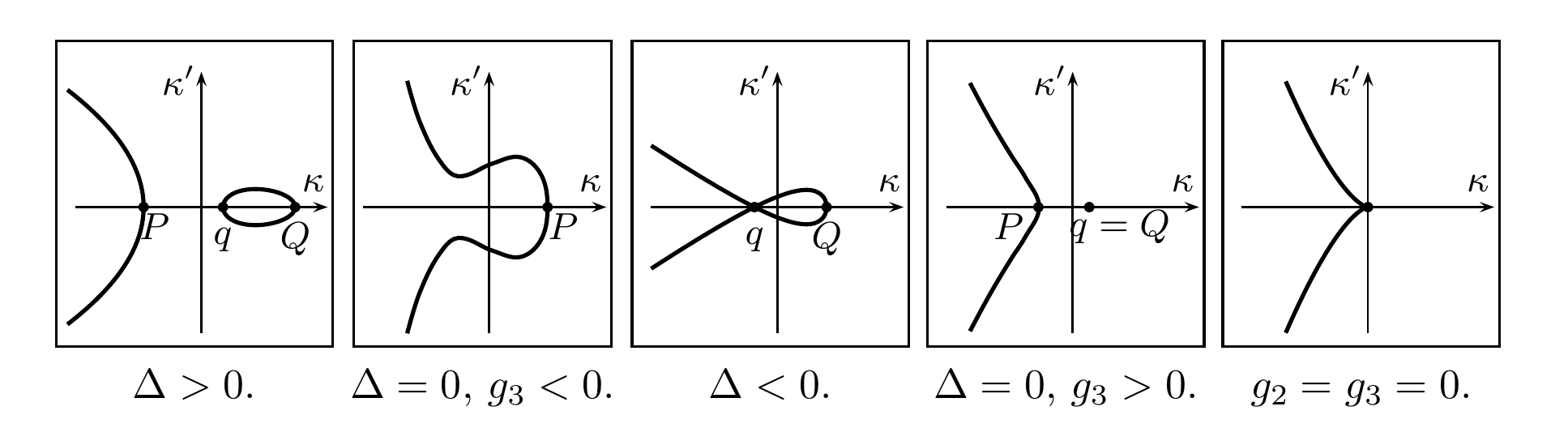}
\caption{The different possibilities for the cubic $\mathcal{C}$ in the phase plane, depending on the sign of $g_2$, $g_3$ and the cubic discriminant $\Delta=(g_2)^3-27\,(g_3)^2$.}
\label{fig:cubicC}
\end{center}
\end{figure}

If $c_0=\varpi_2$ and $\Delta<0$, then $\wp(s-c_0)=\wp(s-\varpi_1)$. After translation of the parameter $s$ by the real number $\varpi_1)$, this is already included in the next case.

If $c_0=0$, the curvature $\kappa(s)=-6\,\wp(s)$ will raise from $-\infty$ at $s=0$, have a maximum $P$ at $\varpi_1$, and fall again towards $-\infty$ for $s$ approaching $2\,\varpi_1$. 

Now that we have found the curvature function $\kappa(s)=-6\,\wp(s-c_0)$ of the curve, we have to find the co-ordinate functions $x$ and $y$, as a function of $s$, which satisfy (\ref{eq:defkappa}). The translates of these differentiated functions, which map $s$ to $x'(s+c_0)$ resp. $y'(s+s_0)$, satisfy the homogeneous second-order ordinary differential equation 
\[
F'' = 6\,\wp\,F\,,
\]
\textit{i.e.}, the equation of Lam\'e. Two independent complex solutions\footnote{Here $\varphi_2$ can be replaced by the second solution of Lam\'e's equation which is given in \cite{whittaker_watson} p.\ 459, ex.\ 29, in case this function is not a multiple of $\varphi_1$.} of this differential equation are
\begin{equation}
\label{eq:varphi}
\left\{
\begin{array}{rcl}
\varphi_1(z) & = & \displaystyle\frac{\partial\ }{\partial z} 
\left\lgroup \frac{\sigma(z+c)}{\sigma(z)}\,\mathrm{exp}\left(\left(\frac{-\wp'(c)}{2\,\wp(c)}-\zeta(c) \right)z\right) \right\rgroup\,; \\
\varphi_2(z) & = & \displaystyle\int_{z_0}^{z} \frac{1}{\big(\varphi_1(v)\big)^2 } \,\dd v\,\varphi_1(z)\,.\rule{0pt}{22pt} \\
\end{array}
\right.
\end{equation}
Here $c$ stands for a complex 
number\footnote{There are two possible ways to choose the constant $c$ in each period parallelogram in such a way that the condition $\wp(c)=\frac{-g_3}{g_2}$ is satisfied, but this choice does not influence the geometry of the curve.}
for which $\wp(c)=\frac{-g_3}{g_2}$, and which may be taken in one of the forms $\varpi_1+d\,i$, $\varpi_2+d$, $d$ or $d\,i$ (for a real number $d$) if $\Delta>0$, and in one of the forms $d$ or $d\,i$ (for a real number $d$) if $\Delta<0$.
Further, $\sigma$ and $\zeta$ have their usual meaning as functions constructed from $\wp$ (see \cite{whittaker_watson},~Ch.~\textsc{xx}). The curve $(x,y)$ is now described by
\begin{equation}
\label{eq:xy}
\left\lgroup
\begin{array}{c}
x(s)\\
y(s)\rule{0pt}{15pt}
\end{array}
\right\rgroup
=
B
\left\lgroup
\begin{array}{c}
\displaystyle \int_{s_0}^{s} \varphi_1(t-c_0) \,\dd t \\
\displaystyle \int_{s_0}^{s} \varphi_2(t-c_0) \,\dd t \rule{0pt}{23pt}
\end{array}
\right\rgroup
+
\left\lgroup
\begin{array}{c}
x_0\\
y_0\rule{0pt}{15pt}
\end{array}
\right\rgroup
\end{equation}
for some numbers $x_0,y_0,s_0\in\mathbb{R}$ and a non-degenerate complex matrix $B$. Because of the fact $\varphi_1\,\varphi_2'-\varphi_2\,\varphi_1'=1$, this curve will be parametrised by equi-affine arc-length if and only if $\det\,B=1$.

The above construction can be simplified if the functions $\mathfrak{Re}\,\varphi_1(s-c_0)$ and $\mathfrak{Im}\,\varphi_1(s-c_0)$ are independent, for then we can write simply
\begin{equation}
\label{eq:xymoresimply}
\left\lgroup
\begin{array}{c}
x(s)\\
y(s)\rule{0pt}{15pt}
\end{array}
\right\rgroup
=
B
\left\lgroup
\begin{array}{c}
\displaystyle  \mathfrak{Re} \left\lgroup \frac{\sigma(s-c_0+c)}{\sigma(s-c_0)}\,\mathrm{exp}\left(\left(\frac{-\wp'(c)}{2\,\wp(c)}-\zeta(c) \right)(s-c_0)\right) \right\rgroup \\
\displaystyle  \mathfrak{Im} \left\lgroup \frac{\sigma(s-c_0+c)}{\sigma(s-c_0)}\,\mathrm{exp}\left(\left(\frac{-\wp'(c)}{2\,\wp(c)}-\zeta(c) \right)(s-c_0)\right) \right\rgroup \rule{0pt}{26pt}
\end{array}
\right\rgroup
+
\left\lgroup
\begin{array}{c}
x_0\\
y_0\rule{0pt}{15pt}
\end{array}
\right\rgroup
\end{equation}
where the matrix $B$ has now real entries, and should have a determinant appropriately chosen so as to obtain the equi-affine normalisation for the curve $(x,y)$.

\noindent\framebox{\textsc{Case} (A). $g_2\neq 0$, $\Delta> 0$, and the closed component of $\mathcal{C}$ is described.}\raisebox{-9pt}{\rule{0pt}{24pt}}\ \ 
Since the image of the curve $\gamma$ travels along the closed branch of $\mathcal{C}$ in the phase plane in 
Fig.~\ref{fig:cubicC} (left), there necessarily holds $c_0=\varpi_2$. We consider three subcases, depending on the sign of $q=\min\, \kappa$.

\noindent
\textbf{(A.1).} $q>0$. ---In this case it is no restriction to write $q=1$, which can always be achieved by a rescaling. Due to (\ref{eq:g2g3qQ}), the constant $Q>1$ determines the invariants $g_2,g_3$, which will be strictly positive in our case, and consequently the numbers $\varpi_1\in\mathbb{R}$ and $\varpi_2\in i\,\mathbb{R}$ can be found as well. There holds $c=\varpi_1+d\,i$, and formula (\ref{eq:xymoresimply}) is valid as well. 

Let us now search for a periodicity condition on $\gamma$. The period of $\gamma$ has to be a multiple of $2\,\varpi_1$ which is the period of $\kappa$. From the quasi-periodicity of $\sigma$ (see \cite{whittaker_watson},~\S\,22.421) can be deduced that $X(s)=x(s)+i\,y(s)$ satisfies
\begin{equation}
\label{eq:4momega1}
X(s+4\,m\,\varpi_1) = 
\textrm{exp}\left( -4\,m
\left\lgroup\left(\frac{\wp'(c)}{2\,\wp(c)} + \zeta(c) \right)\varpi_1
-\zeta(\varpi_1)\,c \right\rgroup
\right)\,X(s)\,.
\end{equation}
Thus the curve closes up if and only if 
\begin{equation}
\label{eq:closednesscondition}
\left\lgroup
\left(\frac{\wp'(c)}{2\,\wp(c)} + \zeta(c) \right)\varpi_1
-\zeta(\varpi_1)\,c
\right\rgroup
\frac{2\,i}{\pi}
=\frac{n}{m}
\end{equation}
for some natural numbers $n,m$. It should be noticed that imaginary part of the left-hand side of  (\ref{eq:closednesscondition}) vanishes automatically. A period for the curve is given by $4\,m\,\varpi_1$ but for $m$ even, $2\,m\,\varpi_1$ will also be a period of the curve.

\begin{figure}
\begin{center}
\includegraphics[angle=0,width=0.48\textwidth]{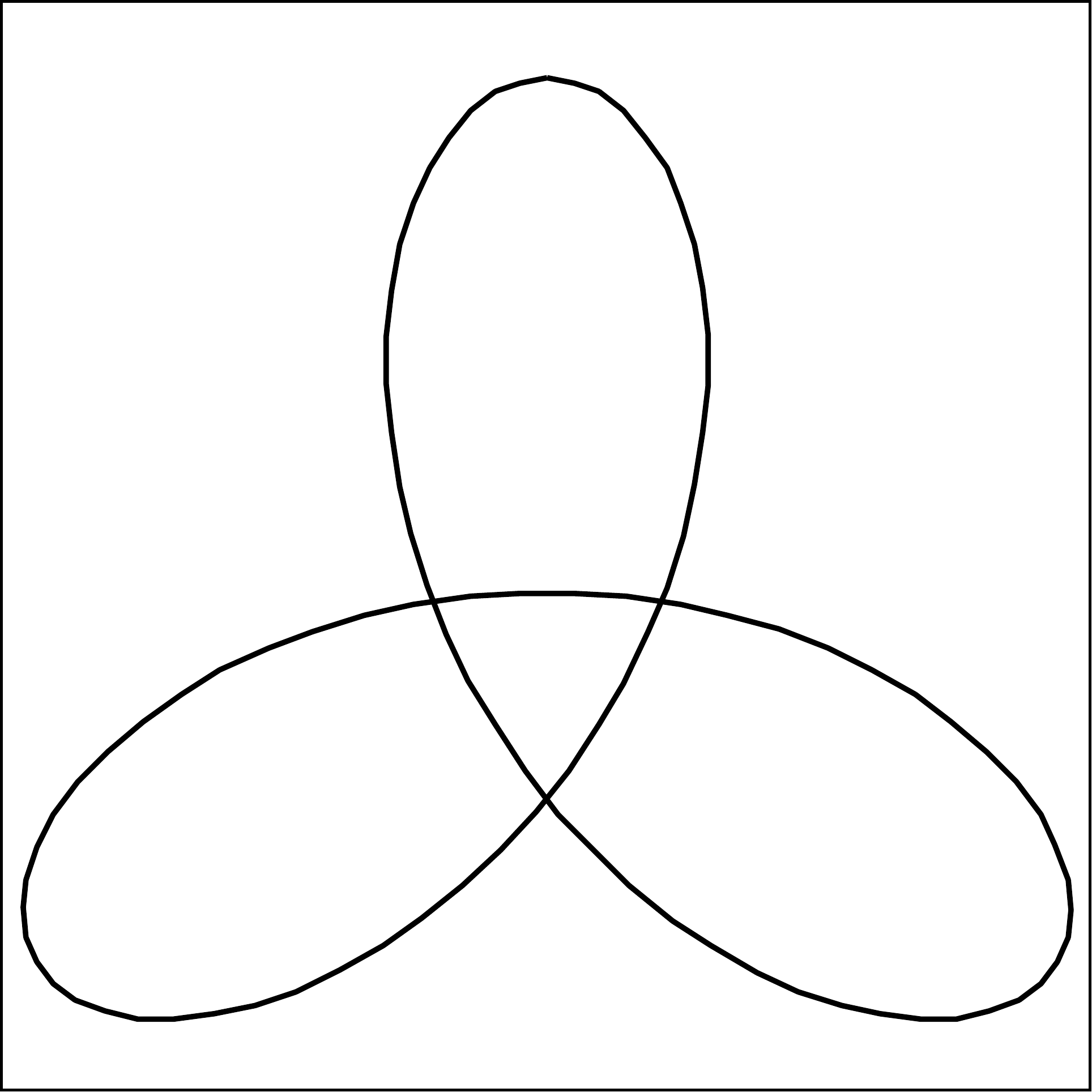}
\includegraphics[angle=0,width=0.48\textwidth]{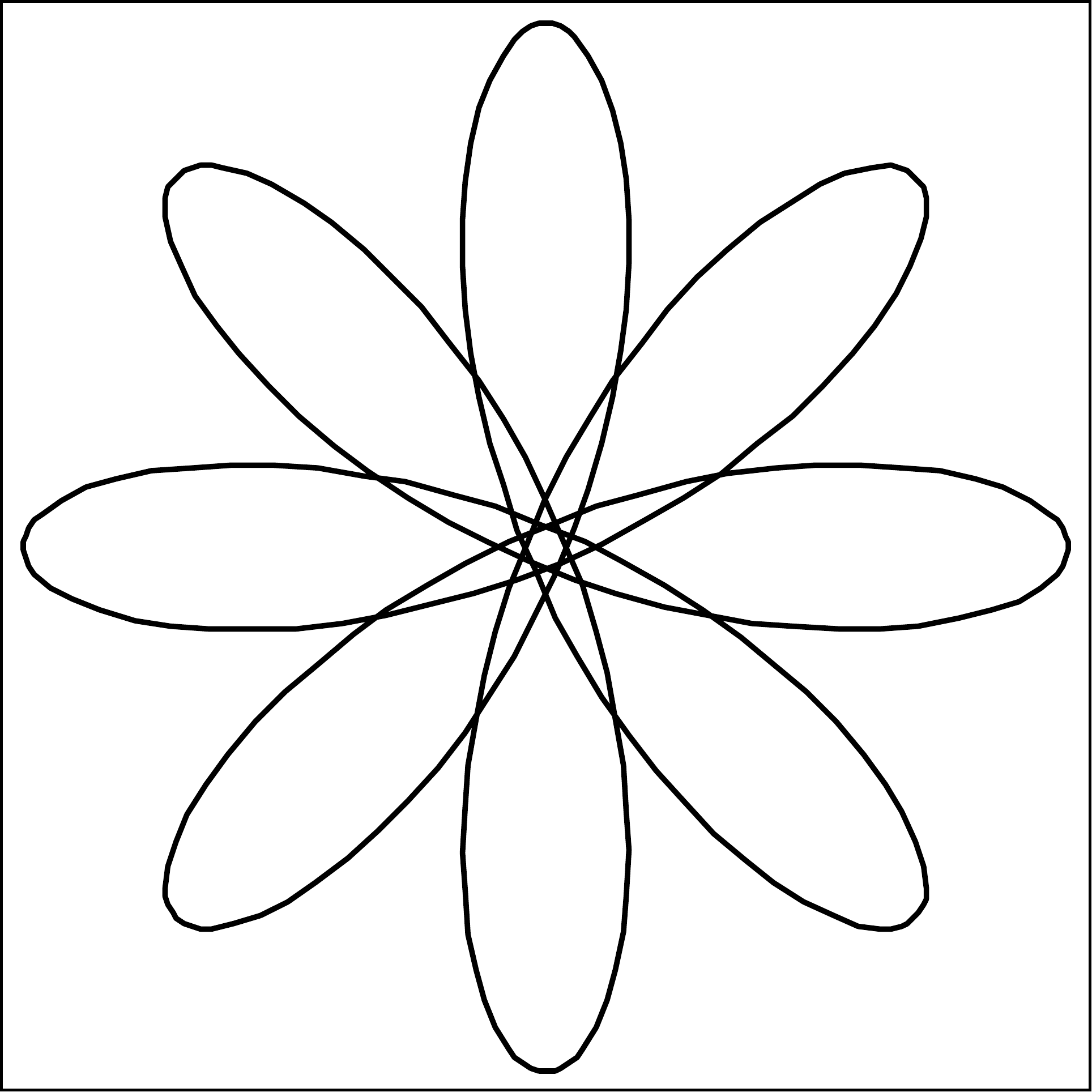}
\\
\vspace{1mm}
\includegraphics[angle=0,width=0.48\textwidth]{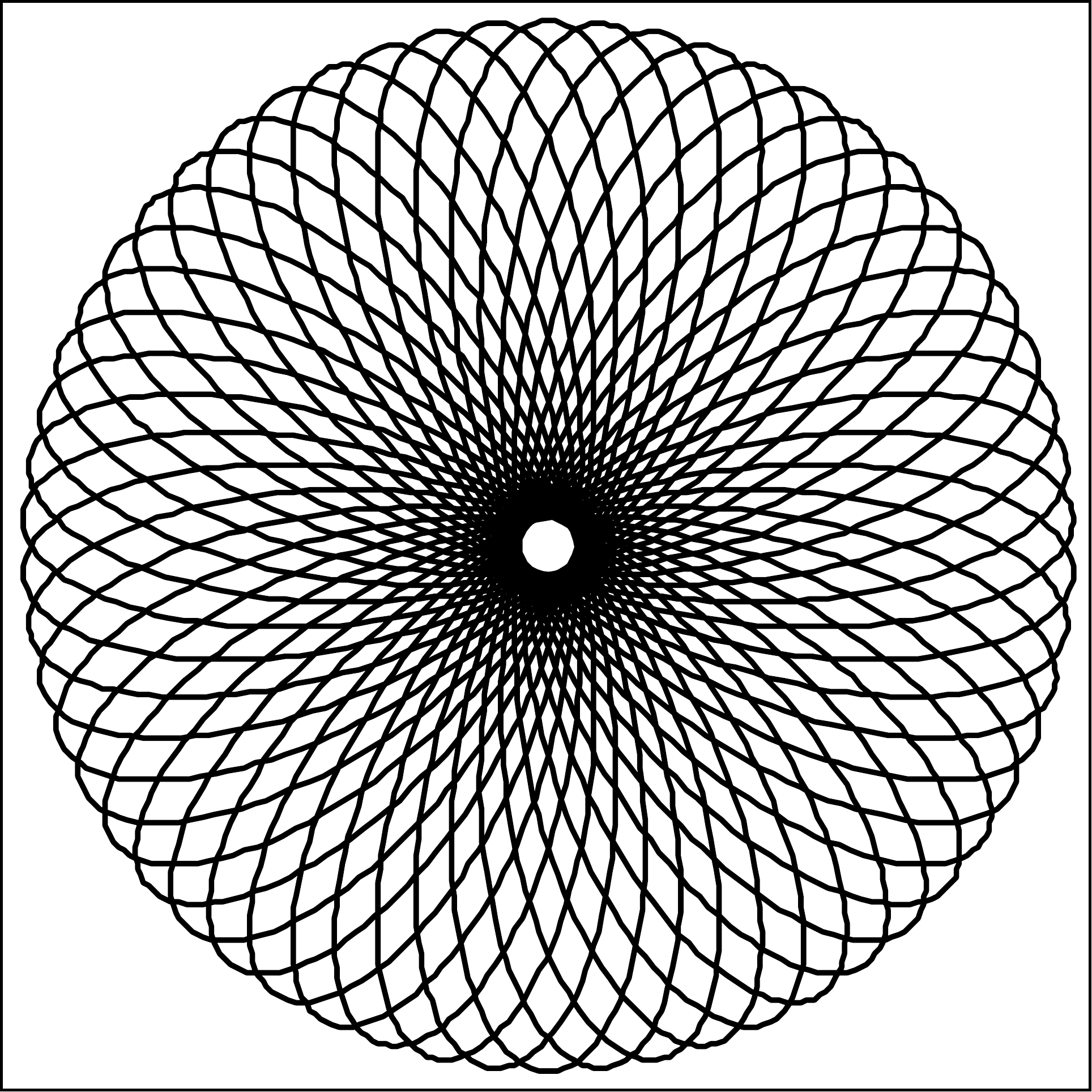}
\includegraphics[angle=0,width=0.48\textwidth]{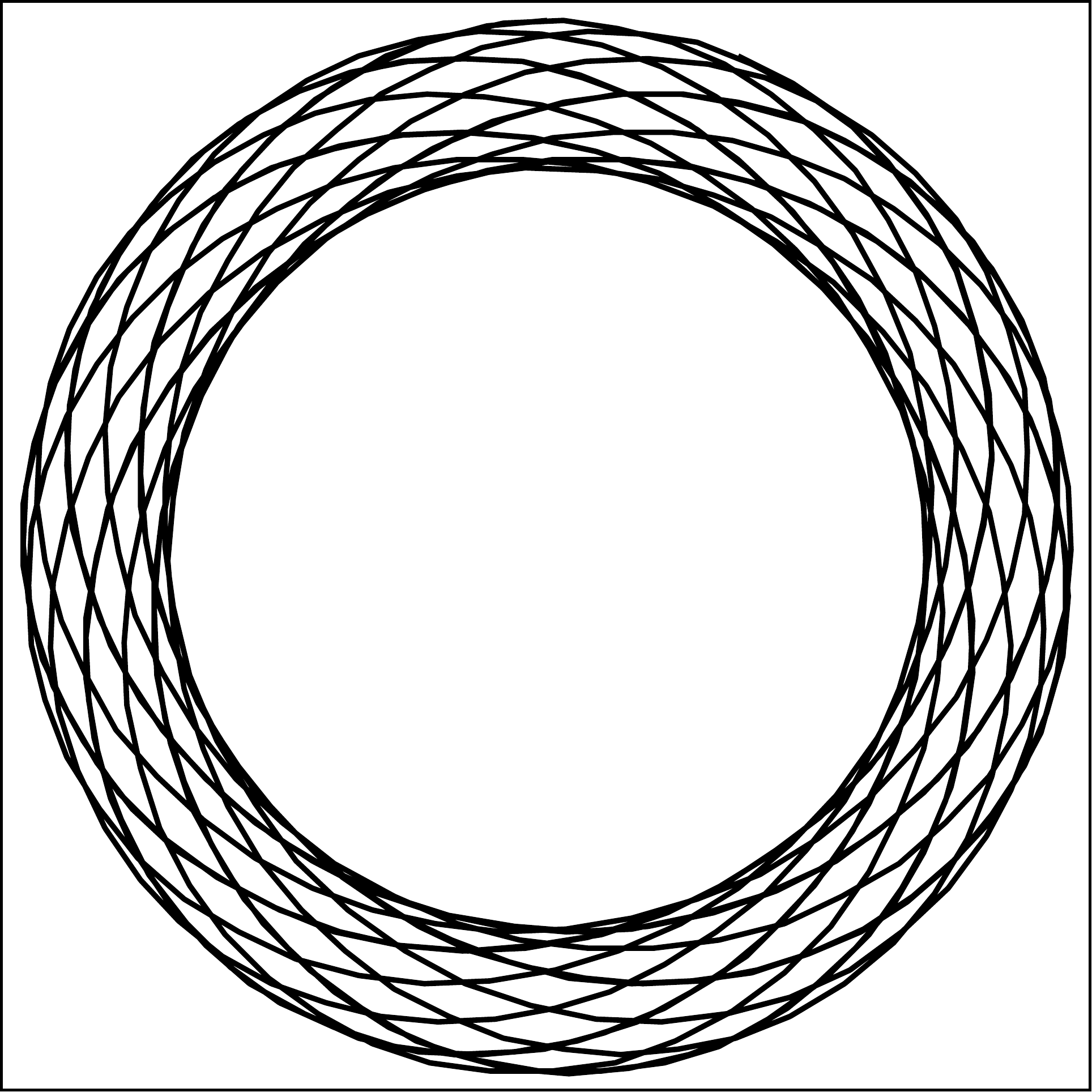}
\\
\vspace{1mm}
\begin{tabular}{|@{\ }l@{\ }|@{\ }l@{\ }|@{\ }l@{\ }|@{\ }l@{\ }|@{\ }l@{\ }|@{\ }l@{\ }|@{\ }l@{\ }|}
\hline
\hfill{}$m$\hfill{} & 
\hfill{}$n$\hfill{} & 
\hfill{}$Q$\hfill{} & 
\hfill{}$\varpi_1$\hfill{} & 
\hfill{}$\varpi_2$\hfill{} & 
\hfill{}$c$\hfill{} &
\hfill{}Figure\hfill{} 
\rule{0pt}{13pt} \\
\hline
\phantom{0}3 & \phantom{0}4 & \phantom{0}3.940\,854\,279 & 1.424\,009\,578
& $(1.670\,043\,233)\,i$ & $\varpi_1-(1.540\,700\,057)\,i$ & Top, left.\rule{0pt}{13pt} \\
\phantom{0}4 & \phantom{0}5 & \phantom{0}8.947\,959\,902  & 
1.009\,840\,213 & $(1.086\,362\,374)\,i$ & 
$\varpi_1-(1.058\,686\,673)\,i $ & Top, right. \\
29 & 37 & \phantom{0}6.926\,542\,623 &
1.129\,312\,548 & $(1.239\,778\,028)\,i$ &
$\varpi_1-(1.194\,744\,029)\,i $
& Bottom, left. \\
17 & 24 & \phantom{0}1.244\,192\,459 &
2.097\,620\,948 & $(3.602\,731\,724)\,i$ &
$\varpi_1-(2.351\,154\,225)\,i$ & Bottom, right.\\
\hline
\end{tabular}
\caption{Some closed curves from Case (A.1) and the corresponding data.}
\label{fig:closed1}
\end{center}
\end{figure}
\begin{figure}
\begin{center}
\includegraphics[angle=0,width=0.48\textwidth]{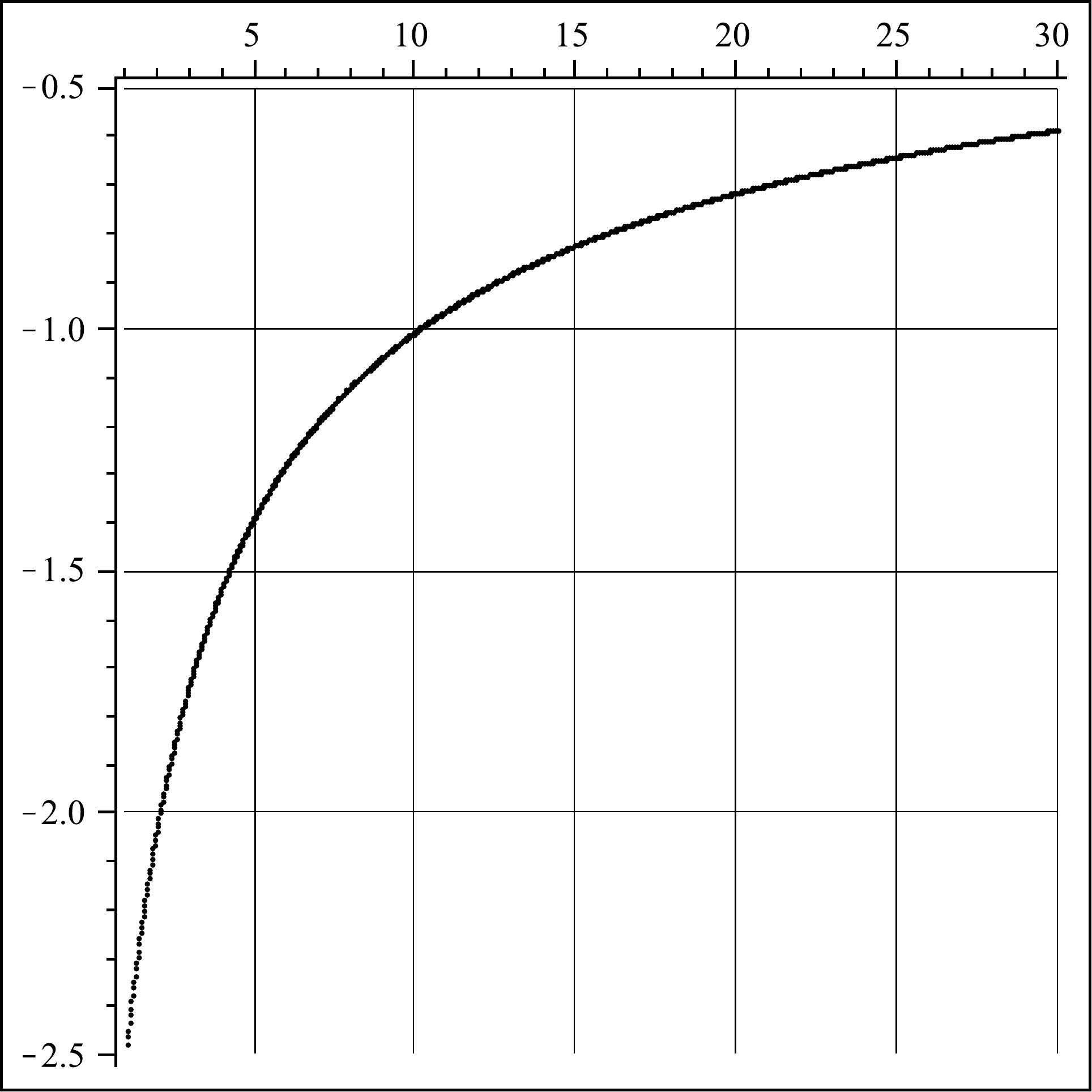}
\includegraphics[angle=0,width=0.48\textwidth]{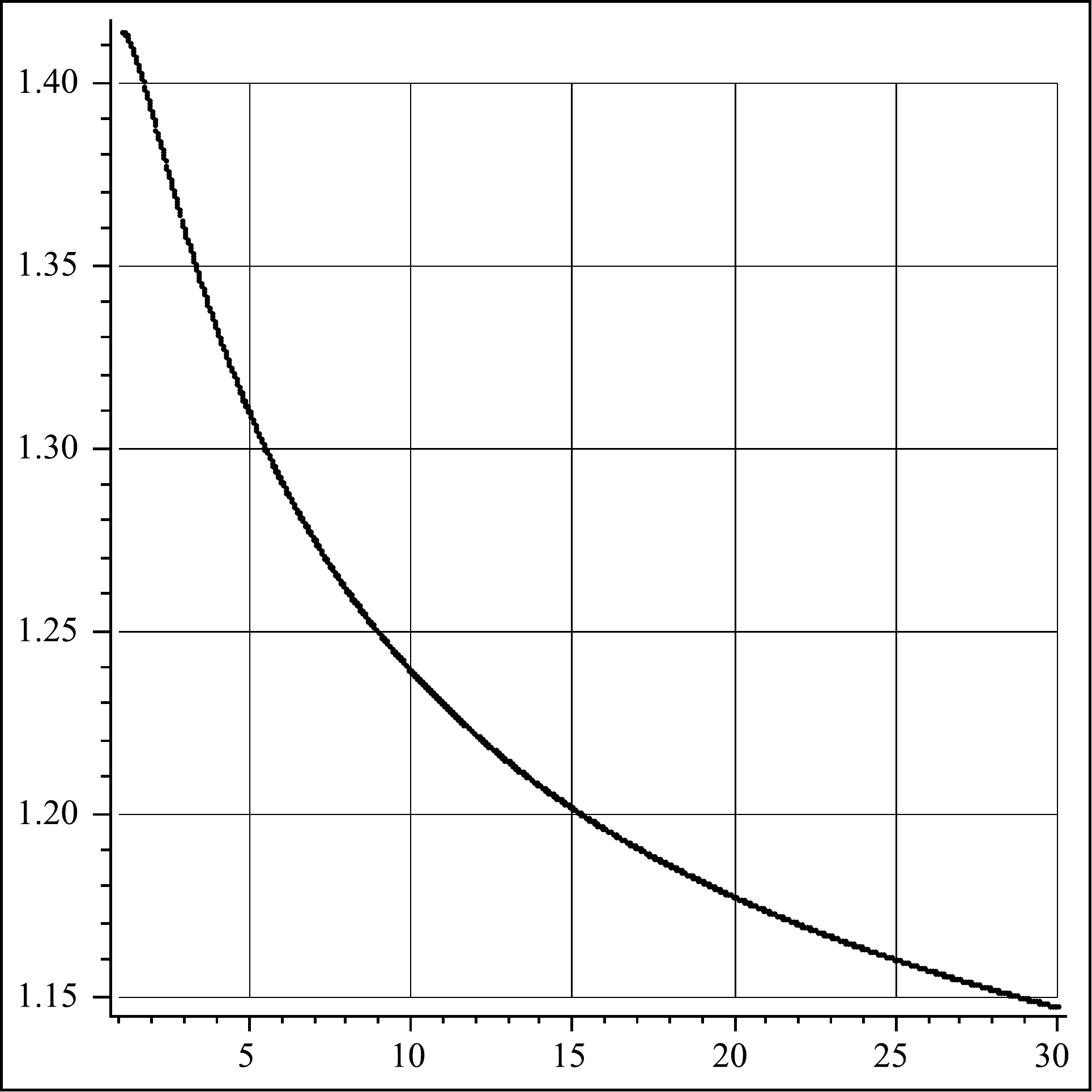}
\caption{The closedness condition for Case (A.1).
The horizontal axis represents the value $Q=\max\kappa$ for a curve with $q=1$. For given $q$ and $Q$, the values $g_2$, $g_3$, $\varpi_1$, and $\varpi_2$ are known. The value $d$ which can be read off from the left figure determines a complex number $c=\varpi_1+d\,i$ which satisfies $\wp(c)=\frac{-g_3}{g_2}$. In the right picture, the left-hand side of equation (\ref{eq:closednesscondition}) is plotted as a function of $Q$. If this value is a rational number, the curve will be closed.
}
\label{fig:QMN}
\end{center}
\end{figure}
\begin{figure}
\begin{center}
\framebox{
      \includegraphics[angle=90,height=0.37\textwidth,width=0.98\textwidth]{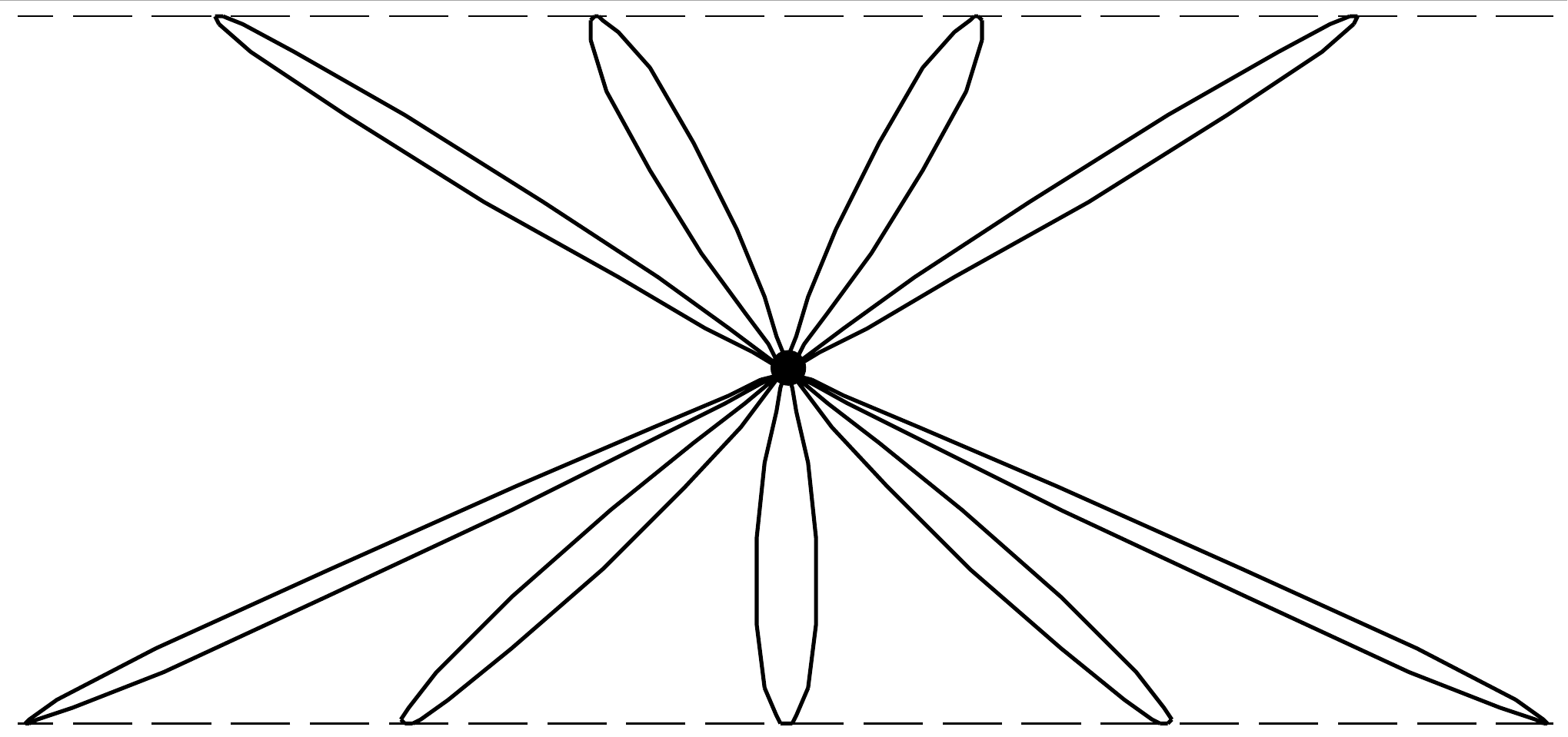}
}
\caption{A part of the curve, arising from $q=0$ and $Q>0$, as in Case (A.2). Parts of the two dashed lines are described by $\beta$, and the dot in the middle represents the origin.
}
\label{fig:qnul}
\end{center}
\end{figure}

\begin{figure}
\begin{center}
\includegraphics[height=0.31\textwidth,width=0.31\textwidth]{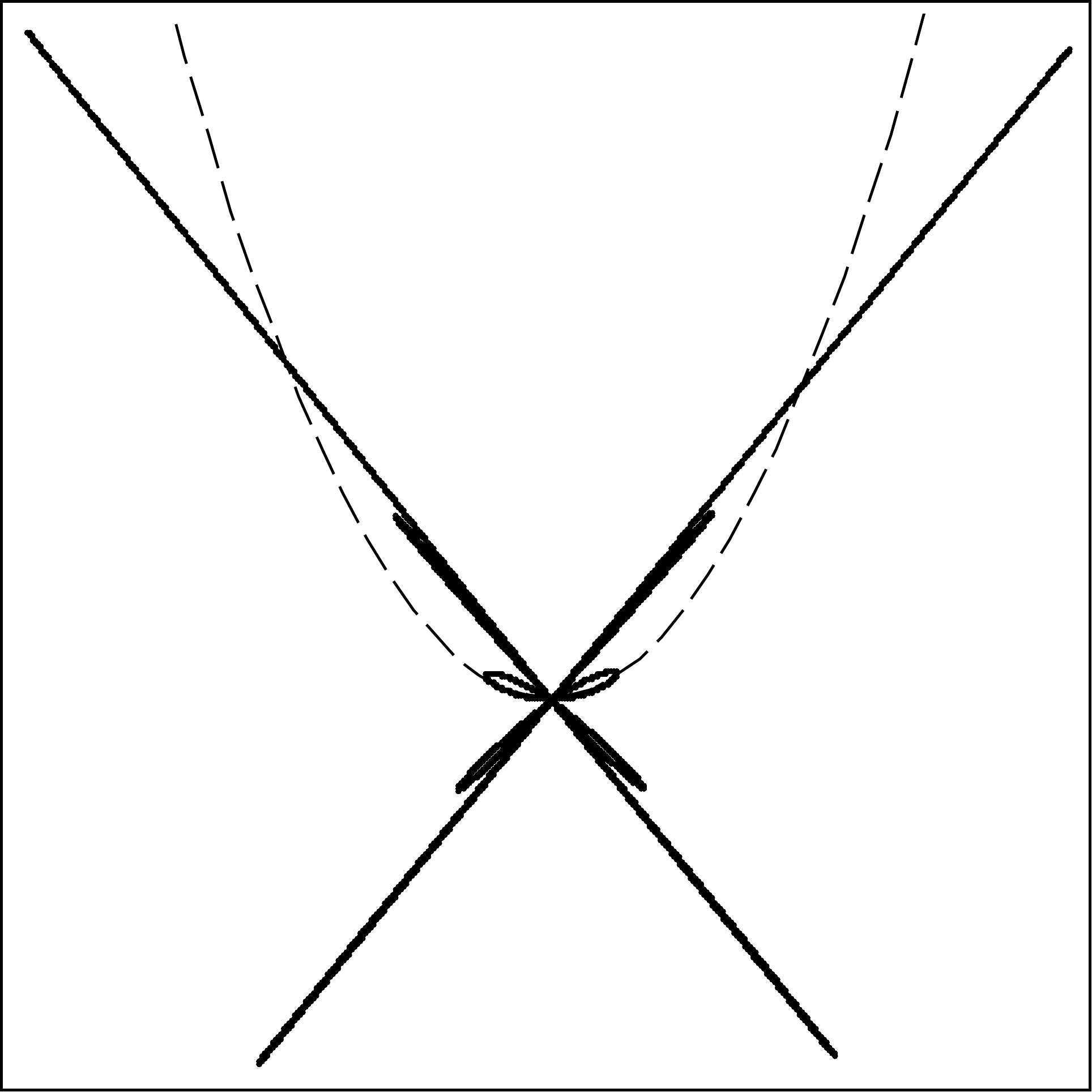}
\includegraphics[height=0.31\textwidth,width=0.31\textwidth]{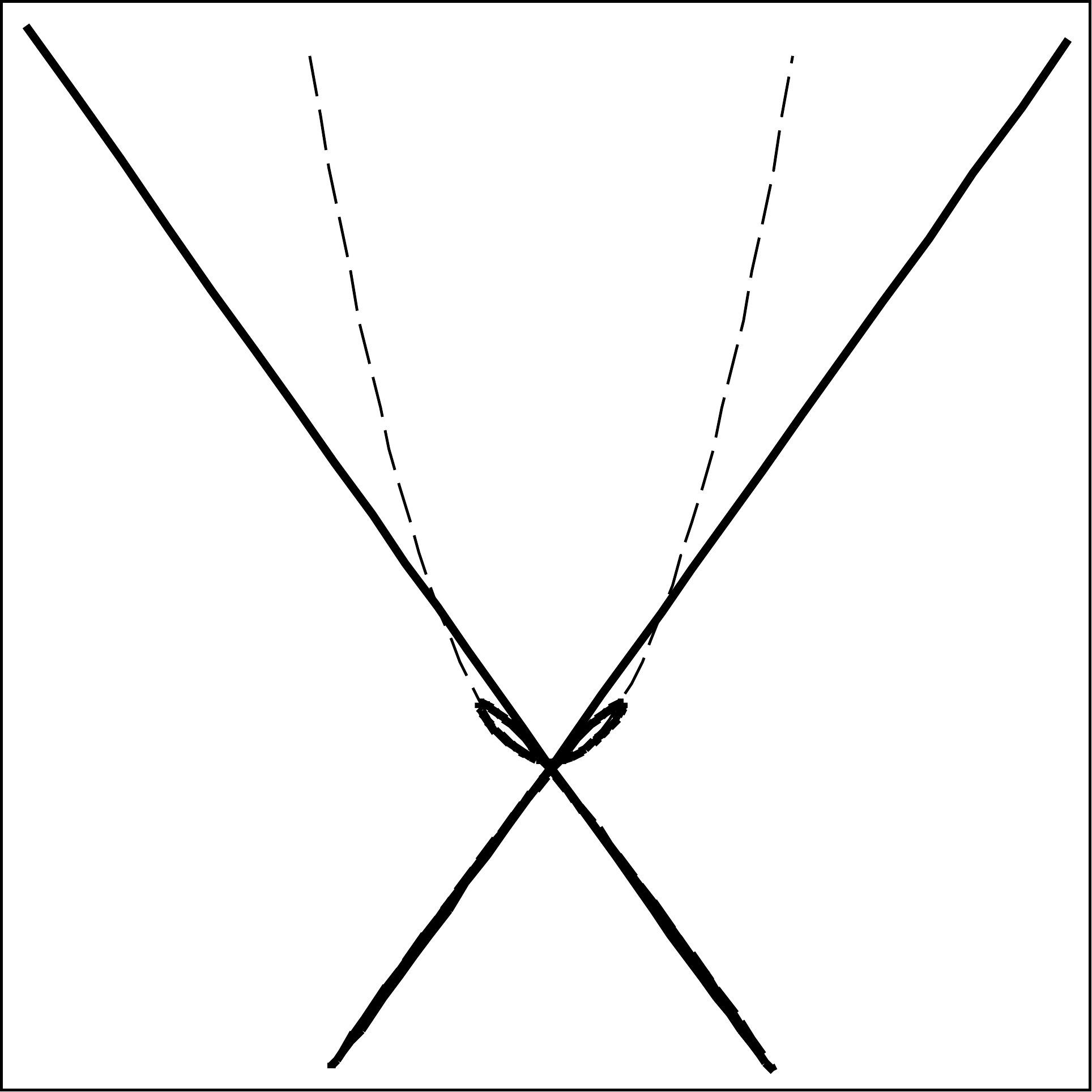}
\includegraphics[height=0.31\textwidth,width=0.31\textwidth]{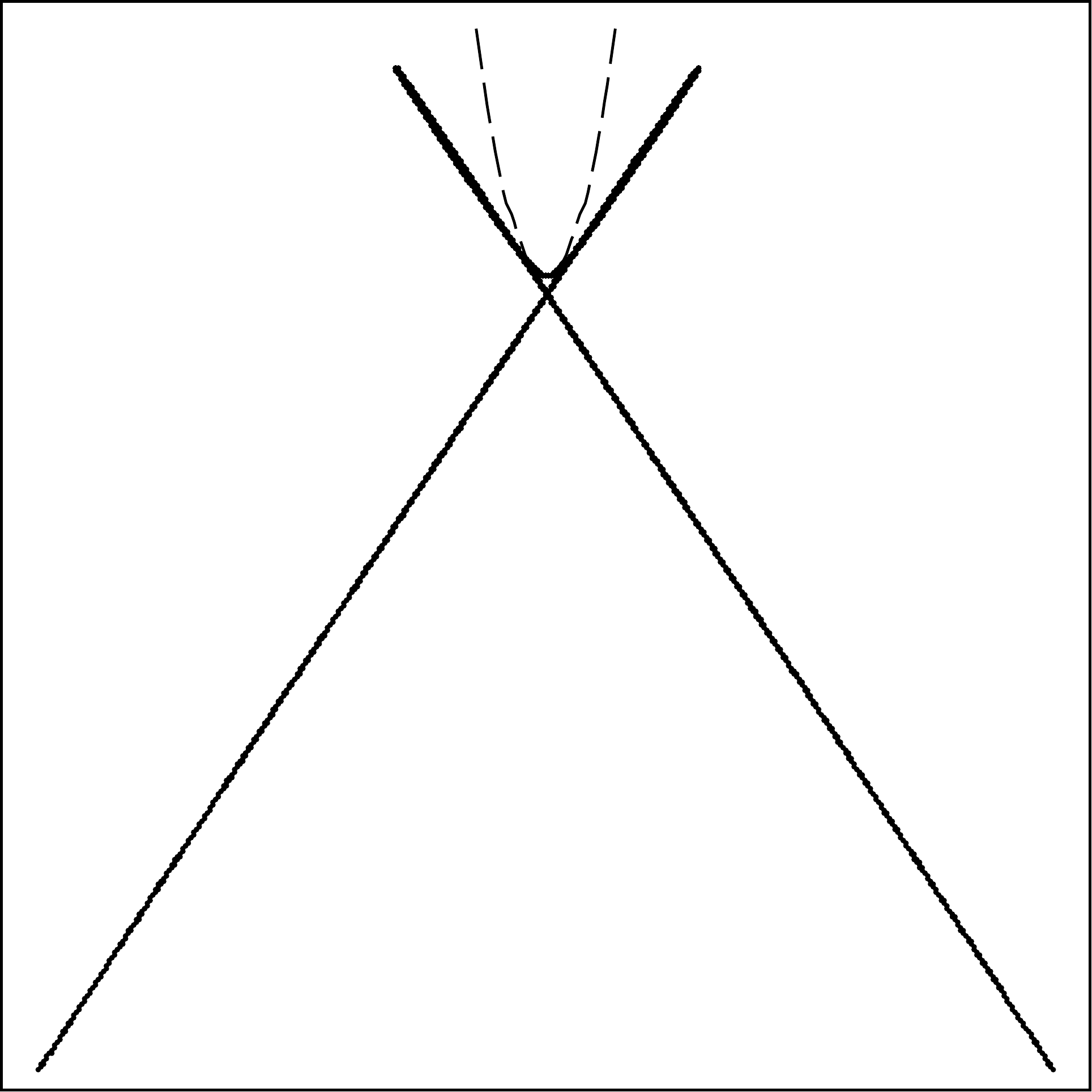}\\
\rule{0pt}{0.313\textwidth}
\includegraphics[height=0.31\textwidth,width=0.31\textwidth]{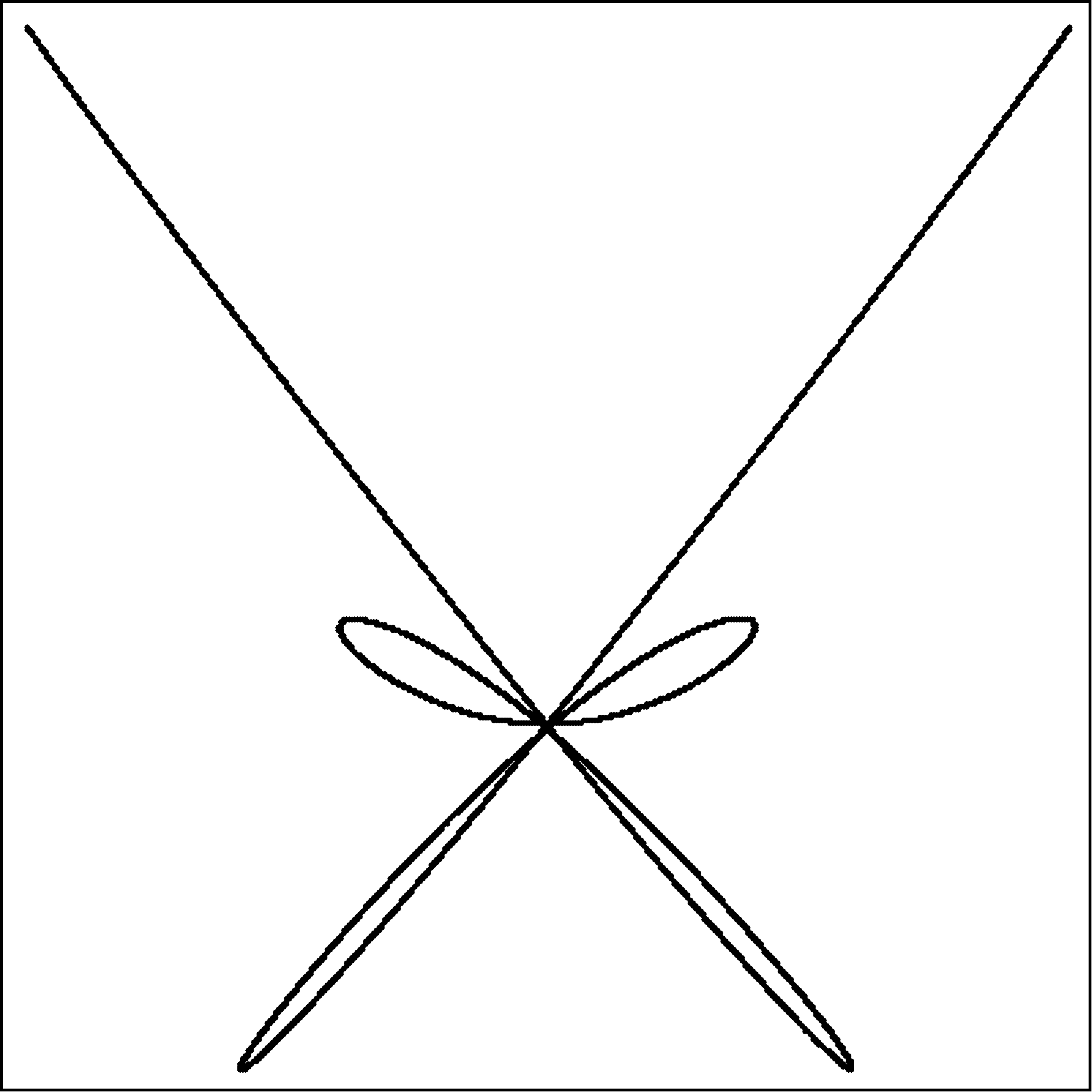}
\includegraphics[height=0.31\textwidth,width=0.31\textwidth]{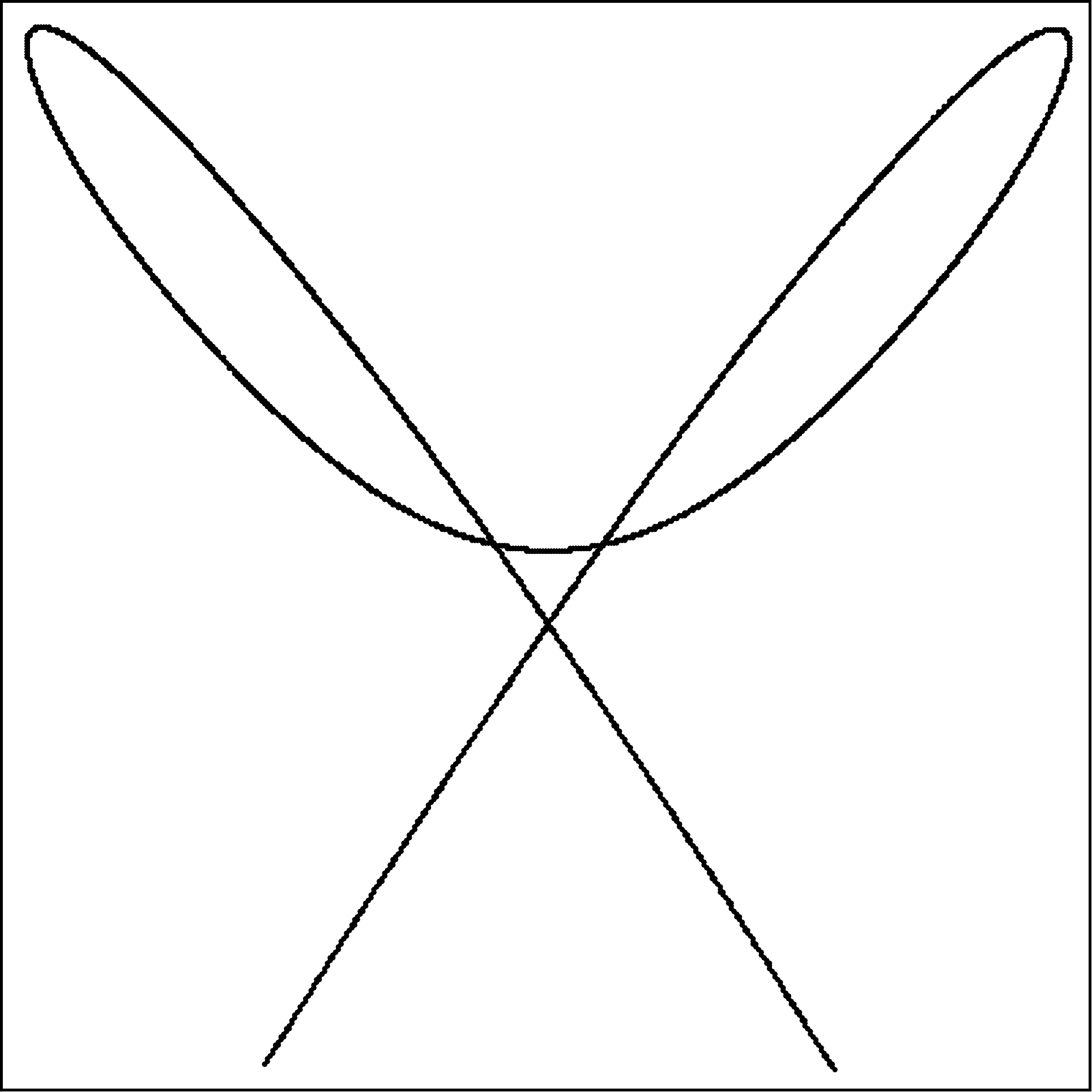}
\includegraphics[height=0.31\textwidth,width=0.31\textwidth]{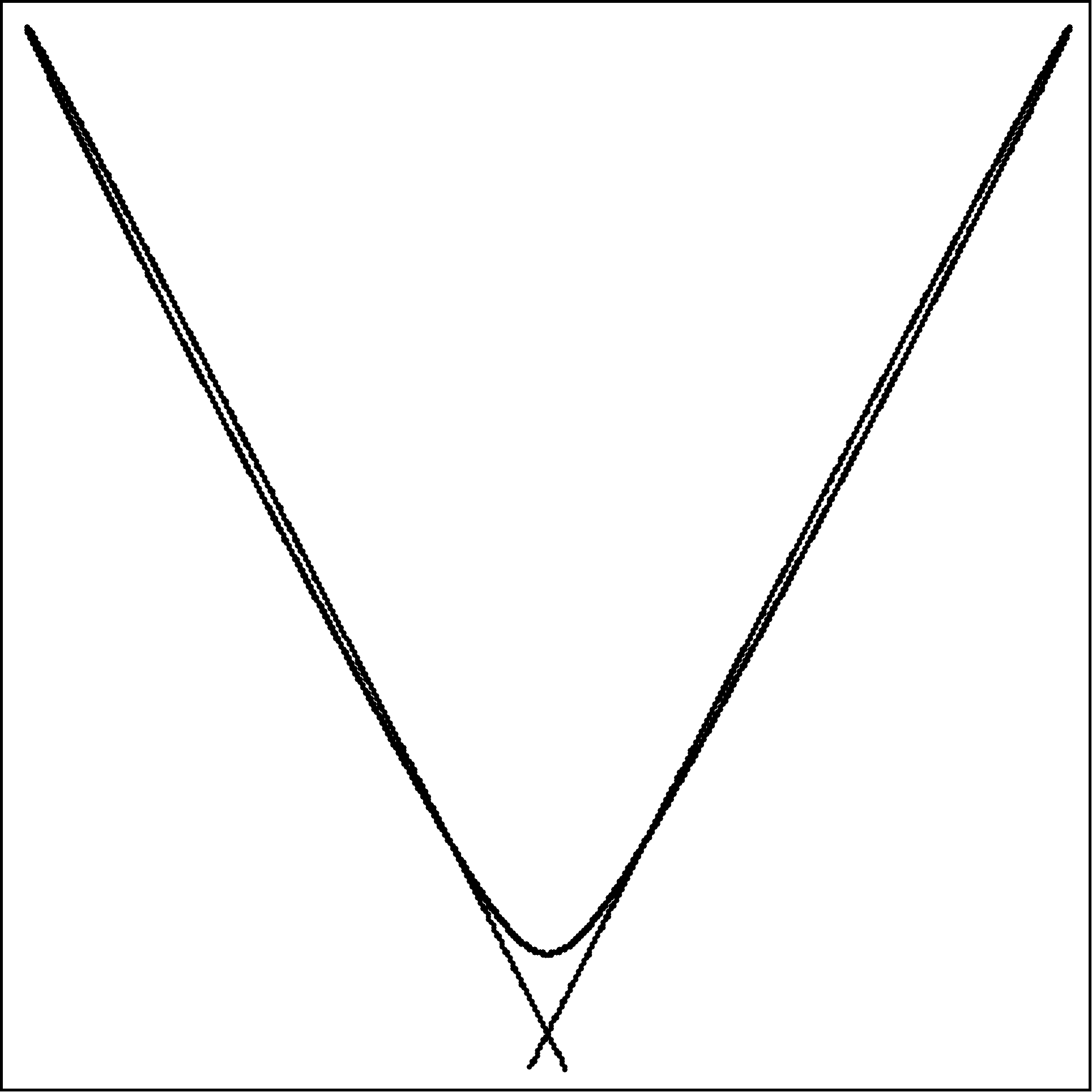}
\rule{0pt}{0.313\textwidth}
\caption{Case (A.3). Three instances of a curve obtained by choosing $q=-1$: the pictures on the second row are always enlarged parts of the picture on the first row, where the osculating parabola has been drawn as well at a point where $\kappa$ achieves a minimum.
Left: $Q=20$; top figure: 9 periods of $\kappa$ are displayed; bottom figure: 5 periods of curvature. Middle: $Q=6$; top resp.\ bottom figure: $4.7$ resp.\ 2.2 periods of curvature. Right: $Q=2.25$; top resp.\ bottom figure: $2.1$ resp.\ $1.7$ periods of curvature.}
\label{fig:qmin}
\end{center}
\end{figure}

Some values for which the condition (\ref{eq:closednesscondition}) is satisfied, which have been numerically obtained, are represented in a table together with the corresponding curves in Figure~\ref{fig:closed1}. 
For a given $Q>1$ which determines, together with $q=1$, the invariants $g_2$, $g_3$ and half-periods $\varpi_1$, $\varpi_2$, can be found a number $c=\varpi_1+d\,i$ for which  $\wp(c)=\frac{-g_3}{g_2}$ holds, and then it can be verified whether or not the closedness condition
(\ref{eq:closednesscondition}) is satisfied.
Now the data in the table accompanying Figure~\ref{fig:closed1}
have been found by, for a prescribed pair of integers $m$ and $n$, iteratively approaching the number $Q>1$ for which the corresponding data satisfy the closedness condition (\ref{eq:closednesscondition}) more and more precisely.
The collected numerical data \textit{suggest} that for prescribed $m$, $n$, a corresponding value of $Q$ can be found for which the curve will be closed, for this Case (A.1), if and only if $\frac{n}{m}\in\left]\,1\,,\,\sqrt{2}\,\right[$. The right part of Figure~\ref{fig:QMN} can be seen as a plot of the values of $\frac{n}{m}$ versus  $Q$.

The critical observer will have noticed that the curves, plotted in Figure~\ref{fig:closed1}, resemble the familiar hypotrochoids suspicously well. It should be stressed that these are different curves, as follows from the fact that the hypotrochoids do not satisfy the Euler--Lagrange equation (\ref{eq:odekappabis}). 

Let us now explain why the curves in Figure~\ref{fig:closed1} have such a ``Euclidean'' symmetry. The curve $s\mapsto \gamma(s-2\,\varpi_1)$ has the same curvature function as $\gamma$, and consequently there exists an equi-affine congruence $\psi$ under which this curve is sent to $\gamma$. Since $\gamma$ closes up after $4\,m\,\varpi_1$, there necessarily holds $\psi^{2\,m}=\textbf{1}$, which implies that the linear part of this mapping has two complex-conjugate eigenvalues. The points of the form $\psi^{\ell}(p)$, for a fixed point $p$, are then easily seen to lie on an ellipse.

We conclude that the group of orientation-preserving equi-affine symmetries of a closed curve consists of the orientation-preserving equi-affine transformations which preserve $2\,m$ points separated at equal equi-affine distance on an ellipse. 
With respect to the Euclidean metric for which this ellipse becomes a circle, this symmetry group becomes a finite group of rotations over an angle which is a multiple of $\frac{\pi}{m}$. 

As such, intending to most appropriately visualise these affine curves on a sheet of paper which was endowed with a Euclidean metric anyway, I have opted to select this affine representative of the curve among all its affine transforms which solve the variational problem equally well, for which the ellipse joining the points of maximal equi-affine curvature becomes a circle. 
It is in this way that the geometrical properties of the curves under consideration are most enjoyed by an audience living in a Euclidean, and not an affine, world.

\noindent
\textbf{(A.2).} $q=0$. ---From (\ref{eq:g2g3qQ}) follows that $g_2=\frac{Q^2}{9}$ and $g_3=0$. The choice of $Q$ merely influences the curve by an affine mapping, and therefore we will assume $Q=1$. The curve can most conveniently be described by making use of the fact that (up to a rescaling of the parameter) $\beta=\frac{1}{\sqrt{\kappa}}\,\gamma$ describes a centro-affinely parametrised straight line. In this way we find, up to a full-affine motion,
\begin{equation}
\label{eq:a2}
\big(x(s),y(s)\big) = \pm\sqrt{\kappa(s)}\,\big(1,s\big),
\end{equation}
where $\kappa(s)=-6\,\wp(s-\varpi_2)$. It should be remarked that the curve is not smooth at the points $s=(2\,\ell+1)\,\varpi_1$ (for $\ell\in\mathbb{Z}$), unless a change sign in the right-hand side is induced at these points. As is clear from Figure~\ref{fig:qnul}, this enforces the path described by $\beta$
to be split in two parallel lines.

\noindent
\textbf{(A.3).} $q<0$. ---The left-most intersection point of $\mathcal{C}$ with the axis $\kappa'=0$, which has been denoted by $P$ in Fig.~\ref{fig:cubicC}, is given by $P=-q-Q$ and is strictly smaller then $q$. Therefore, assuming $q=-1$, we necessarly have $Q>2$, and from (\ref{eq:g2g3qQ}) follows that $g_2>0$ and $g_3<0$. The constant $c$ can be taken in the form $c=\varpi_2+d$ where 
$0\leqslant d \leqslant \varpi_1$. The $x$-co-ordinate of the curve, which is given by (\ref{eq:xy}), satisfies (\ref{eq:4momega1}). A simple calculation shows that
\[
\hspace{-3mm}\left\lgroup
\left(\frac{\wp'(c)}{2\,\wp(c)} + \zeta(c) \right)\varpi_1
-\zeta(\varpi_1)\,c
\right\rgroup
\frac{2\,i}{\pi}
= 1 + 
\left\lgroup
\varpi_1\sqrt{\frac{-g_3}{g_2}} - \zeta(\varpi_1)\,d+\varpi_1\,\zeta(\varpi_2+d)-
\varpi_1\,\zeta(\varpi_2)
\right\rgroup \frac{2\,i}{\pi}\,,
\]
and since the quantity between brackets in the right-hand side never vanishes, as can be shown numerically, the curve can never be periodic. Some of these curves are displayed in Fig.~\ref{fig:qmin}.

\noindent\framebox{\textsc{Case} (B). $g_2\neq 0$, $\Delta> 0$, and the non-closed branch of $\mathcal{C}$ is described.}\raisebox{-9pt}{\rule{0pt}{24pt}}\ \ After possibly having rescaled the curve, it can be assumed that the first ordinate of left point of intersection between the horizontal axis and the curve $\mathcal{C}$ in the phase plane is $P=-1$, \textit{i.e.}, the maximal curvature of the corresponding curve in $\mathbb{A}^2$ is $-1$. Keeping $q$ and $Q$ as notation for the two other points of intersection (which are not reached in this case), we can choose 
$q\in\left]\,-1\,,\,\frac{1}{2}\,\right[$, whereas $Q=1-q$.

\noindent
\textbf{(B.1).} $0<q<\frac{1}{2}$. ---The curve can be described by equation (\ref{eq:xymoresimply}).

\noindent
\textbf{(B.2).} $q=0$. ---An adaption of equation (\ref{eq:a2}) from Case (A.2) is valid.

\noindent
\textbf{(B.3).} $-1<q<0$. ---Such curves can be described by equation (\ref{eq:xy}). 

As can be seen on Figure~\ref{fig:B}, the curve follows its osculating hyperbola at its symmetrical point fairly close in all three cases.

\begin{figure}
\begin{center}
\includegraphics[angle=0,width=0.48\textwidth]{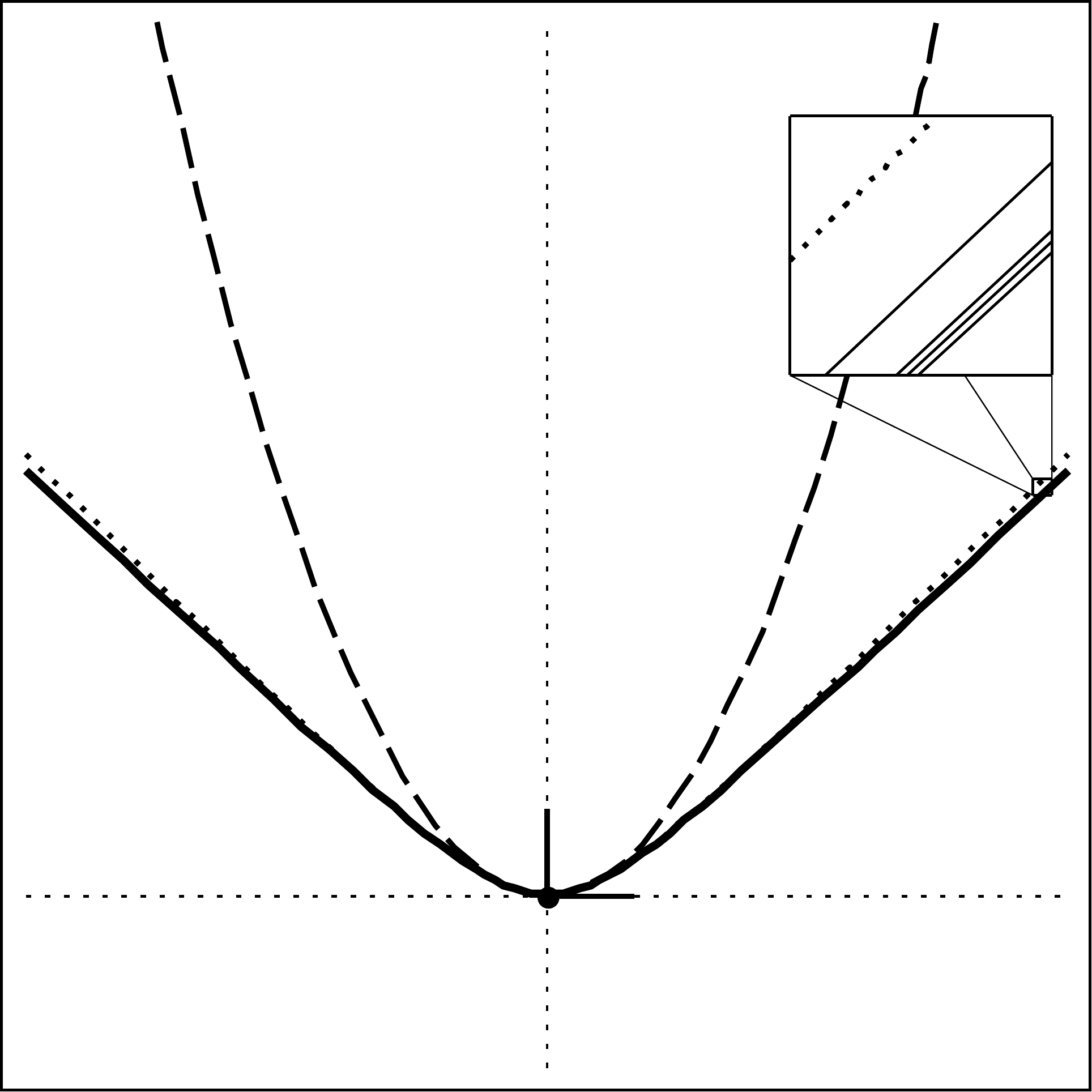}
\caption{Four of the curves described in Case (B) are displayed. Since these curves nearly coincide they are represented by a single thick line in the  main picture. They have been drawn in such a way that they share the equi-affine tangent and normal vector (displayed as short, black lines) at the point where their curvature takes the maximal value -1. Their common osculating parabola (dashed) and hyperbola (dotted) is displayed as well.
In the magnified parallelogram can be seen, from left-up to right-down, a portion of this hyperbola, and the four curves obtained by choosing $q=-0.5$, $0$, $0.15$ and $0.49$, respectively.
}
\label{fig:B}
\end{center}
\end{figure}

\noindent\framebox{\textsc{Case} (C). $g_2\neq 0$ and $\Delta< 0$.}\raisebox{-9pt}{\rule{0pt}{24pt}}\ \  Let us denote $P$ for the only real point of intersection between $\mathcal{C}$ and the horizontal axis in the phase plane, whereas the two remaining complex intersection points are $\frac{-P}{2}\pm\tau\,i$ for a number $\tau>0$. The invariants $g_2$ and $g_3$ are given by
\begin{equation}
\label{eq:g2g3Ptau}
g_2 = \frac{1}{36}\big(3\,P^2-4\,\tau^2\big)
\qquad
\textrm{and}
\qquad
g_3 = \frac{-P}{216} \big(P^2+4\,\tau^2\big)\,.
\end{equation}
After possibly having rescaled the curve $\gamma$, there holds $P=1$, $0$ or $-1$, whence the five cases below are seen to cover all possibilities.

\noindent
\textbf{(C.1).} $P=1$ and $\tau>\frac{\sqrt{3}}{2}$. ---Formula (\ref{eq:xymoresimply}) holds. See Figure~\ref{fig:C123}, left.

\noindent
\textbf{(C.2).} $P=1$ and $\tau<\frac{\sqrt{3}}{2}$. ---See Figure~\ref{fig:C123}, left.

\noindent
\textbf{(C.3).} $P=0$. ---Formula (\ref{eq:a2}) is valid. See Figure~\ref{fig:C123}, middle.

\noindent
\textbf{(C.4).} $P=-1$ and $\tau>\frac{\sqrt{3}}{2}$. ---See Figure~\ref{fig:C123}, right.

\noindent
\textbf{(C.5).} $P=-1$ and $\tau<\frac{\sqrt{3}}{2}$. ---See Figure~\ref{fig:C123}, right.


\begin{figure}
\begin{center}
\includegraphics[angle=0,width=0.315\textwidth]{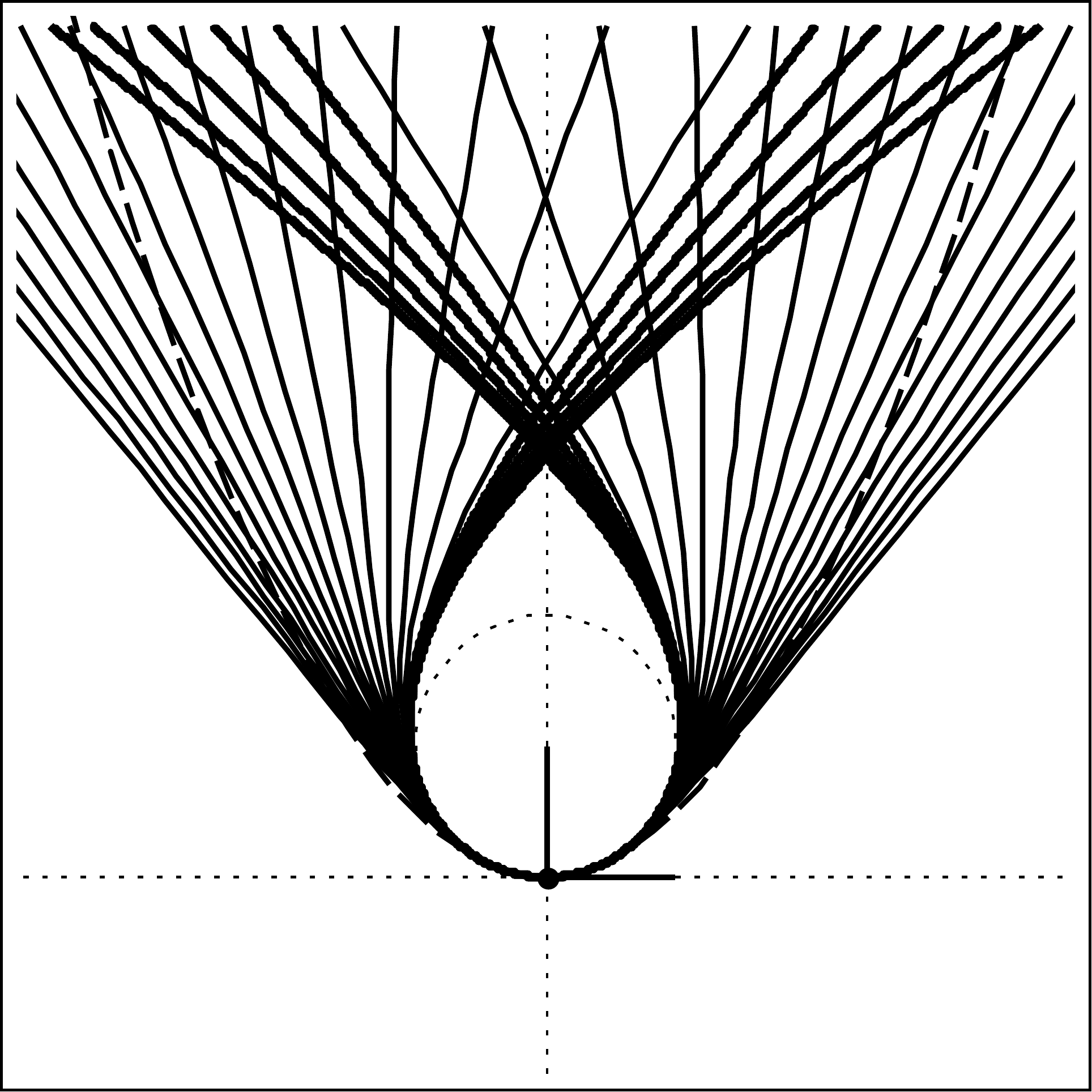}
\includegraphics[angle=0,width=0.315\textwidth]{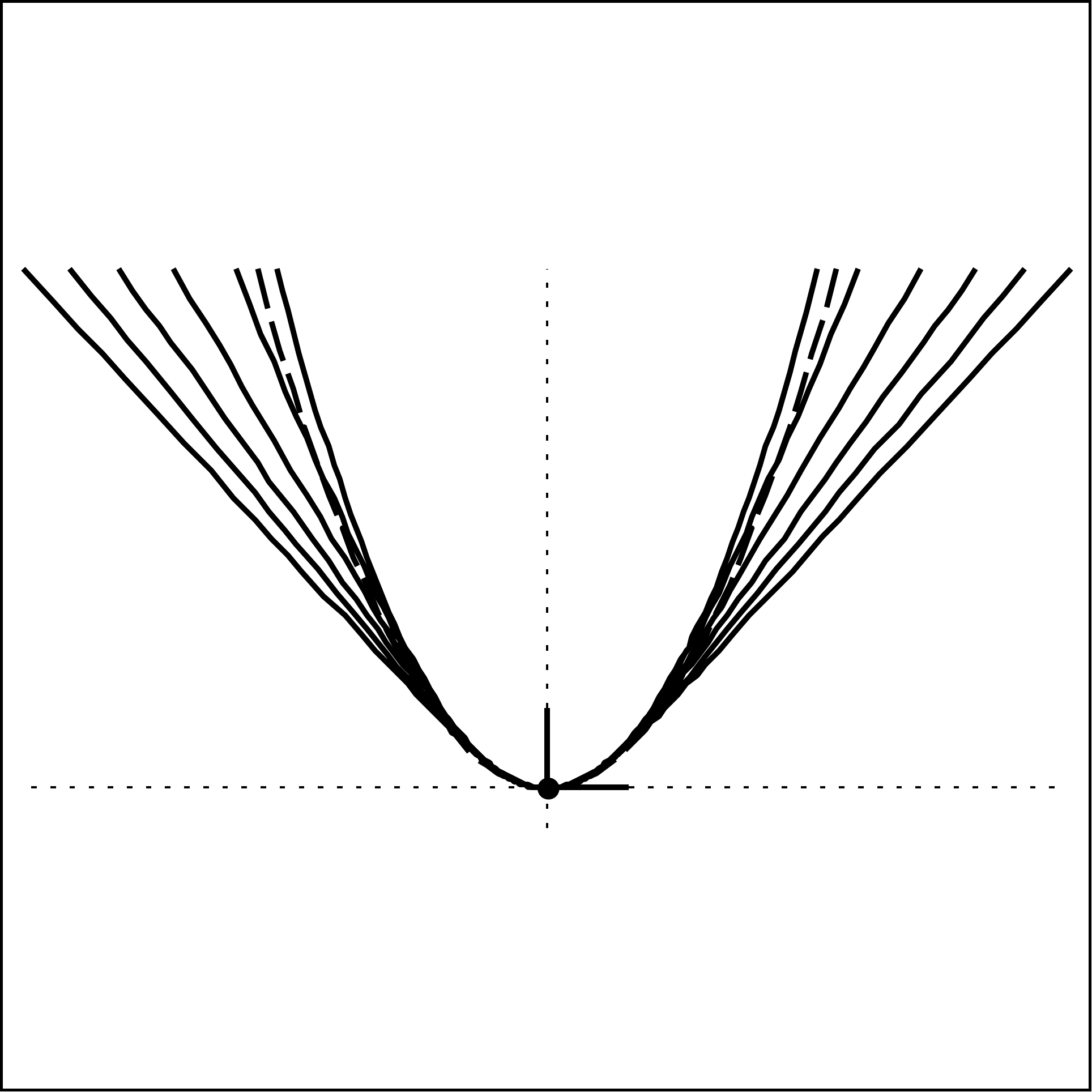}
\raisebox{-0.006\textwidth}{\includegraphics[angle=0,width=0.328\textwidth]{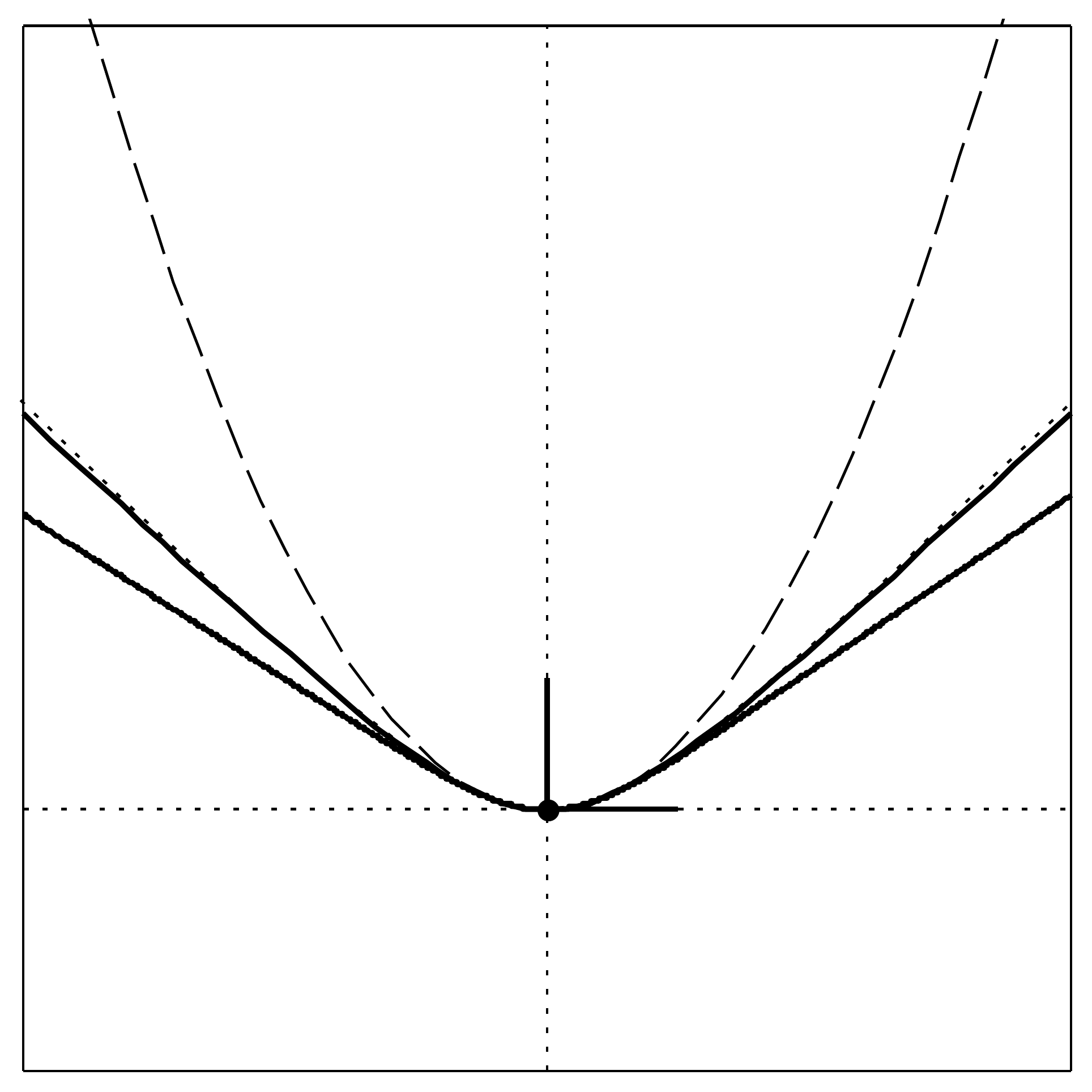}}
\caption{The left picture contains the curves of Case (C.1) with $\tau=\frac{7}{8}+\frac{4}{10}\,i$ for $i=0,1,2\ldots 15$ (in thin solid line) and the curves of Case (C.2) with $\tau=\frac{i}{7}$ for $i=1,2\ldots 5$ (in thick solid line). Middle: The curves of Case (C.3) where $\tau$ runs over the values $\frac{1}{10},1,2,3,4,5$. Remark that the parameter $\tau$ is merely a rescaling parameter in this case. The right picture displays two curves of Case (C.4) and (C.5), where $\tau=8$ and $\tau=\frac{1}{8}$, resp. In the latter case the curve nearly coincides with the osculating hyperbola at the point of maximal curvature.}
\label{fig:C123}
\end{center}
\end{figure}

\noindent\begin{tabular}
{@{\hspace{-0.002\textwidth}}p{0.74\textwidth}@{\hspace{0.05\textwidth}}p{0.2\textwidth}}
\noindent\framebox{\textsc{Case} (D). 
$g_3 < 0$ 
and $\Delta = 0$.}\raisebox{-9pt}{\rule{0pt}{24pt}}\ \ 
Define $E=\sqrt[3]{g_3}<0$. The points of intersection of the cubic $\mathcal{C}$ with the horizontal axis in the phase plane have abscis $3\,E$ (as double point of $\mathcal{C}$) and $-6\,E$. The single curve $\mathcal{C}$ corresponds to four curves in the affine plane, as indicated in the picture right. The curves indicated by (a) and (d) are the same, up to a reversion of the arc-length parameter, whereas the single point (b)  in the phase plane represents a hyperbola. Therefore there remain only two cases to consider. In case (a), the curvature function can be found by integration of (\ref{eq:odekappabis}), with the following result: 
&
\rule{8cm}{0pt}\raisebox{-20mm}{\includegraphics[bb=250 50 400 150,width=0.3\textwidth]{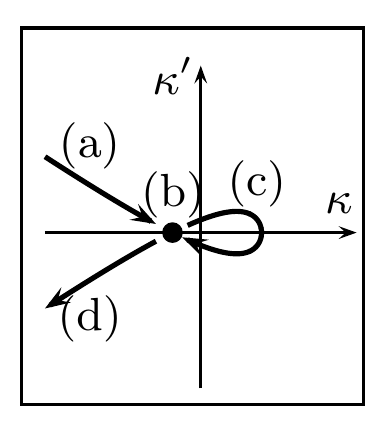}}
\end{tabular}
\[
\kappa(s) = 
9\,E\,\left(\textrm{cotanh}\left(\sqrt{\frac{-3\,E}{2}}\,s\right)\right)^2-6\,E\,.
\]
Two solutions of the differential equation 
$\varphi''=-\kappa\,\varphi$ are given by
\[
\left\{
\begin{array}{rcl}
\varphi_1(s) &=& \exp\left(\sqrt{-3\,E}\,s\right)\left\lgroup
1-3\sqrt{2}\,\cotanh\left(\sqrt{\frac{-3\,E}{2}}\,s\right)+3\left(\cotanh\left(\sqrt{\frac{-3\,E}{2}}\,s\right)\right)^2
\right\rgroup\,;\\
\varphi_2(s) &=& \exp\left(-\sqrt{-3\,E}\,s\right)\left\lgroup
1+3\sqrt{2}\,\cotanh\left(\sqrt{\frac{-3\,E}{2}}\,s\right)+3\left(\cotanh\left(\sqrt{\frac{-3\,E}{2}}\,s\right)\right)^2
\right\rgroup
\,.\rule{0pt}{20pt}
\end{array}
\right.
\]
The curve is now completely described, since $(x',y')$ is obtained from $(\varphi_1,\varphi_2)$ by an affine transformation. The determinant of this transformation should be chosen so as to obtain $x'\,y''-x''\,y'=1$ (see also Figure~\ref{fig:III_II}, left).

In case (c) the function cotanh should be replaced by tanh in the above formulae. It should be noticed that the curvature function, as well as its derivative, tends towards the zero function for $E$ approaching zero. This is reflected in the accompanying Figure~\ref{fig:III_II} (middle). Furthermore, the parameter $E$ is merely a rescaling parameter, and all curves can be obtained from each other from  a (non-unimodular) affine transformation which preserves the depicted osculating parabola.

\begin{figure}
\begin{center}
\includegraphics[angle=0,width=0.32\textwidth]{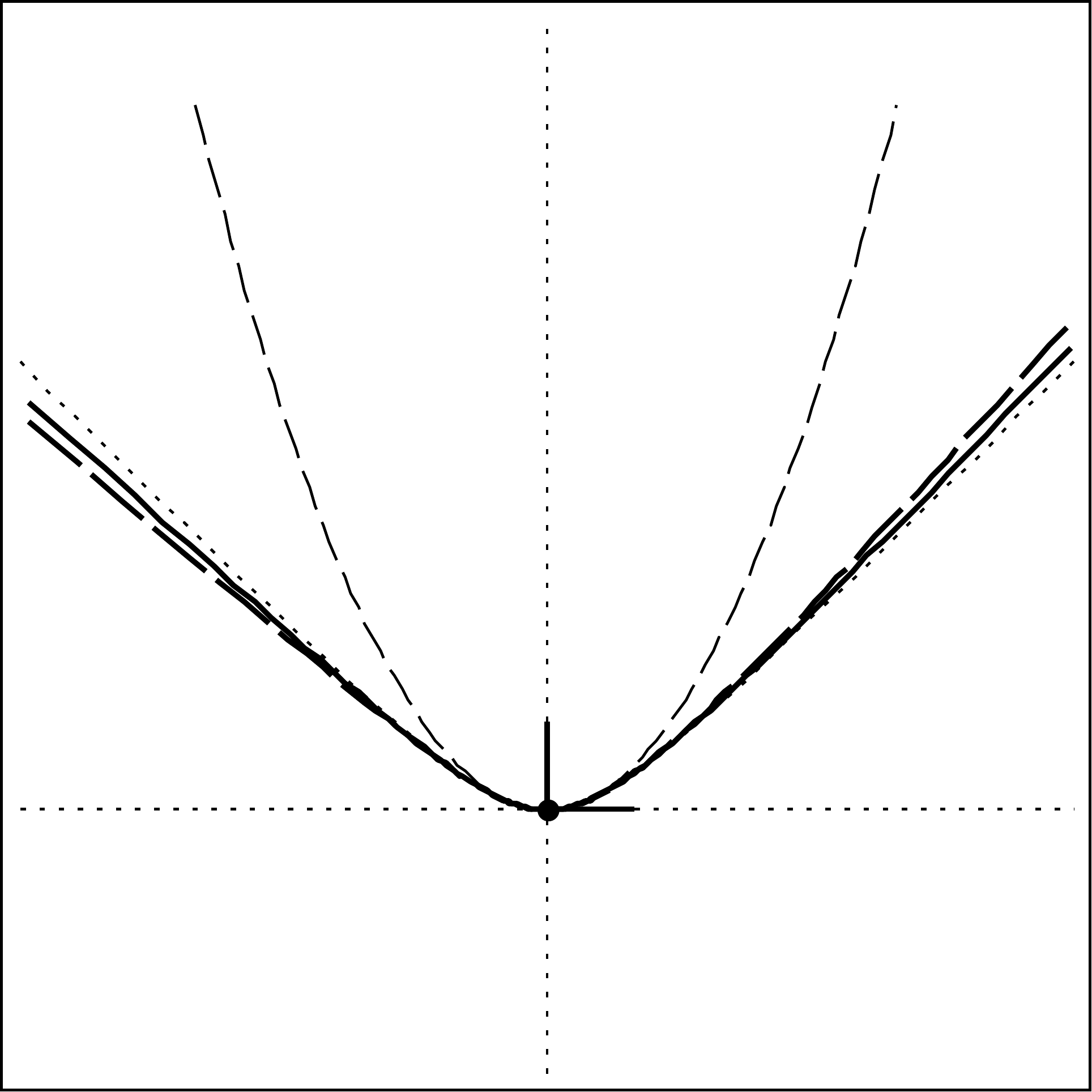}
\includegraphics[angle=0,width=0.32\textwidth]{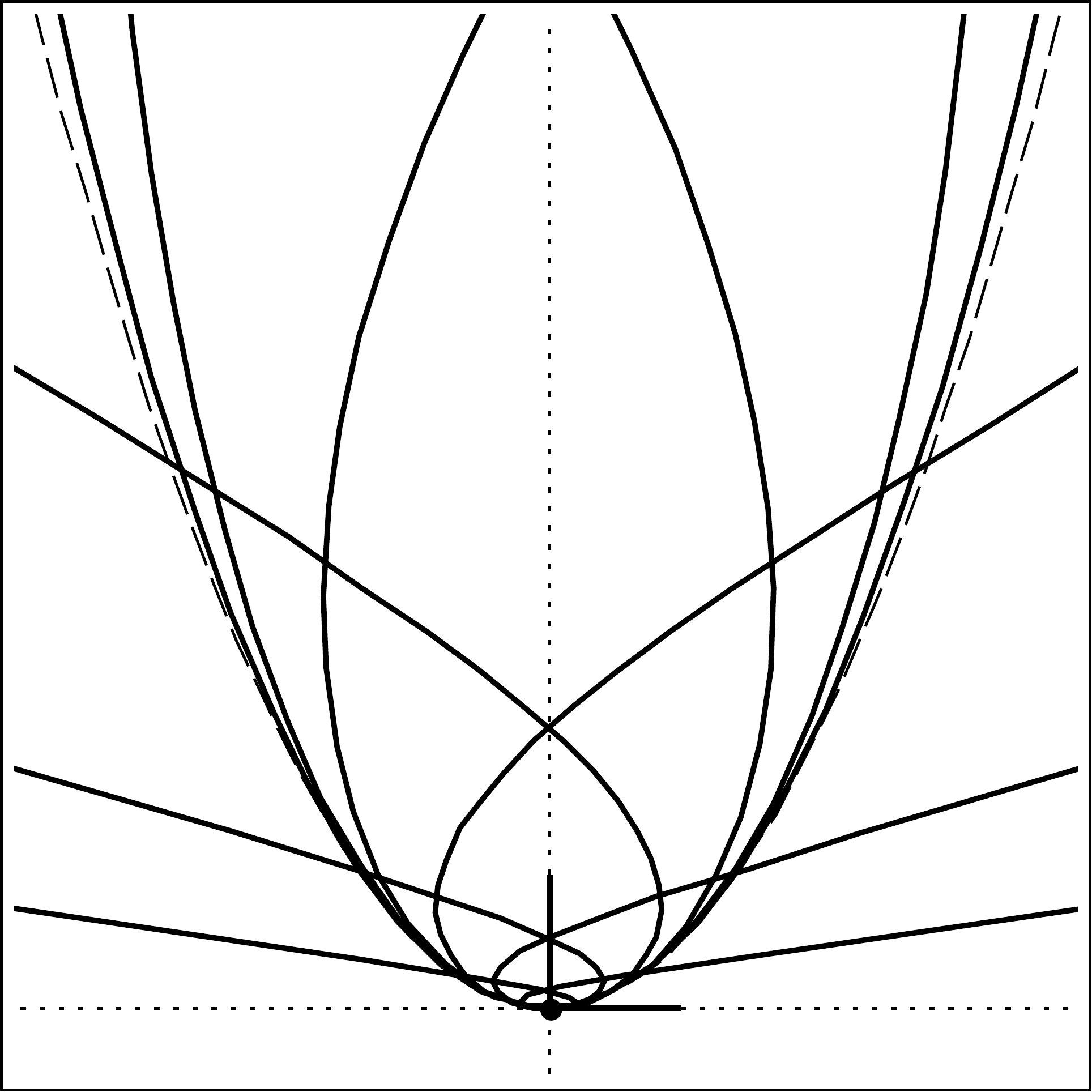}
\includegraphics[angle=0,width=0.32\textwidth]{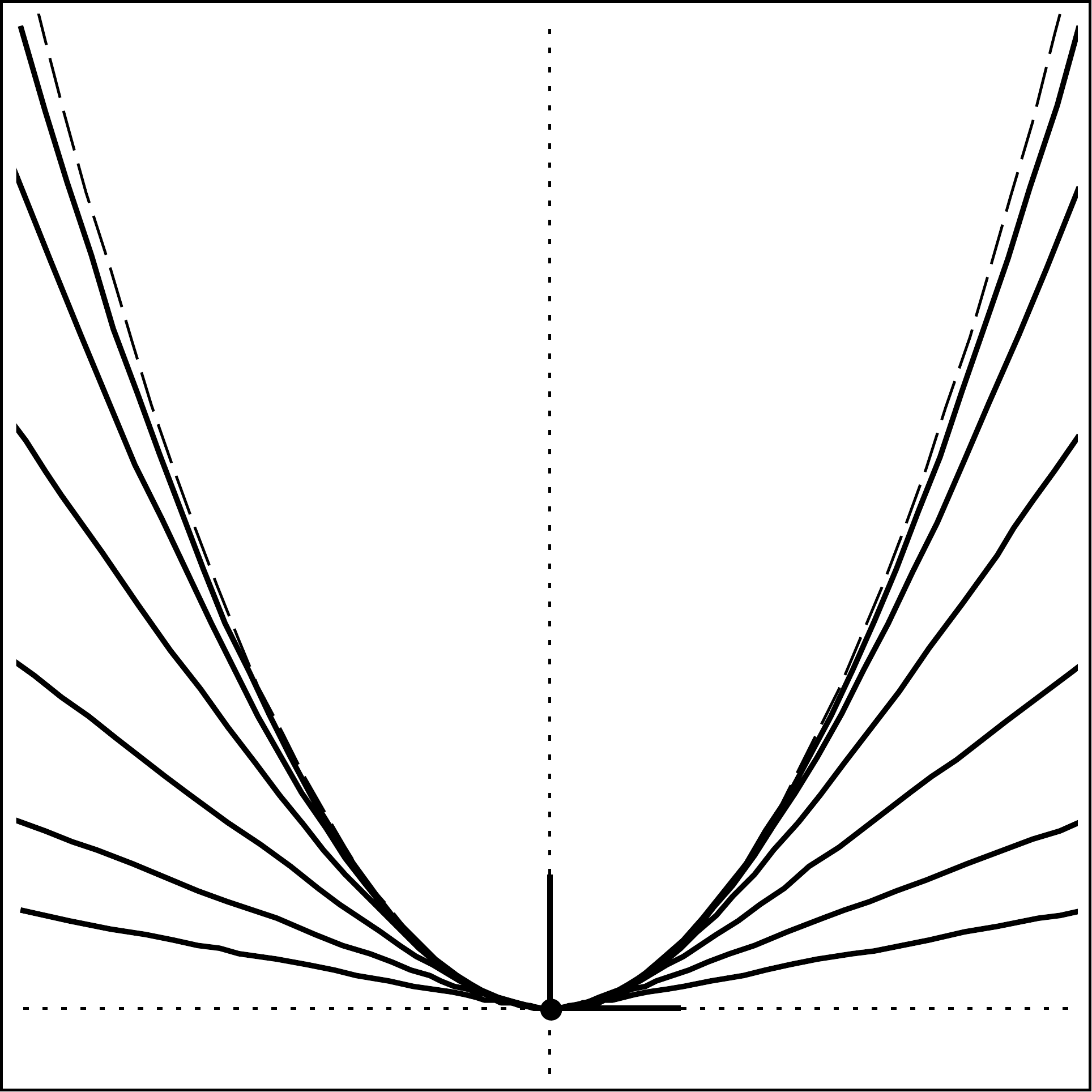}
\caption{In the left picture two curves of case (D.a), corresponding to $E=-1/4$ (curve in long dashed line) and $E=-1/2$ (solid line), are displayed. 
The curves have been drawn in such a way that they share their equi-affine Frenet frame, osculating parabola (dashed), and osculating hyperbola (dotted) at the point where their curvature takes the value $-1$.
In the middle figure, we have plotted the curves which are described in 
Case (D.c), where $E$ runs over the values $4^{n}$ for 
$n=-4,-3\ldots1$. These curves are displayed in such a way that they share their osculating parabola (dashed) and equi-affine Frenet frame (displayed as two small lines) at the parameter value $s=0$ where $\kappa$ achieves its maximal value. As $E$ becomes smaller, the curve more closely resembles this parabola. We notice that the curve has a point of intersection which tends to infinity for $E$ approaching zero. The right picture shows the curves of case (E), where $E$ runs over $-4^{n}$ for the same values of $n$ as before.
}
\label{fig:III_II}
\end{center}
\end{figure}

\noindent\framebox{\textsc{Case} (E). 
$g_3 > 0$ 
and $\Delta= 0$.}\raisebox{-9pt}{\rule{0pt}{24pt}}\ \ 
Define $E=\sqrt[3]{g_3}>0$, then either $\kappa=3\,E$, which gives an ellipse, or
\[
\kappa(s) = -9\,E\,\left(\tan\left(\sqrt{\frac{3\,E}{2}}\,s\right)\right)^2-6\,E\,.
\]
Similar to the previous case, the curve can be retrieved since $x'$ and $y'$ are linear combinations of the following two functions, and is represented in Figure~\ref{fig:III_II}, right:
\[
\hspace{-5mm}
\left\{
\begin{array}{rcl}
\varphi_1(s) &=& \cos(\sqrt{3\,E}\,s) - 3\,\cos(\sqrt{3\,E}\,s )\left(\tan\left(\sqrt{\frac{3\,E}{2}}\,s\right)\right)^2
+3\sqrt{2}\,\sin(\sqrt{3\,E}\,s)\tan\left(\sqrt{\frac{3\,E}{2}}\,s\right)\,;\\
\varphi_2(s) &=& -\sin(\sqrt{3\,E}\,s) + 3\,\sin(\sqrt{3\,E}\,s )\left(\tan\left(\sqrt{\frac{3\,E}{2}}\,s\right)\right)^2
+3\sqrt{2}\,\cos(\sqrt{3\,E}\,s)\tan\left(\sqrt{\frac{3\,E}{2}}\,s\right)\,.\rule{0pt}{20pt}
\end{array}
\right.
\]

\begin{figure}
\begin{center}
\includegraphics[bb=180 10 430 380, width=0.3\textwidth]{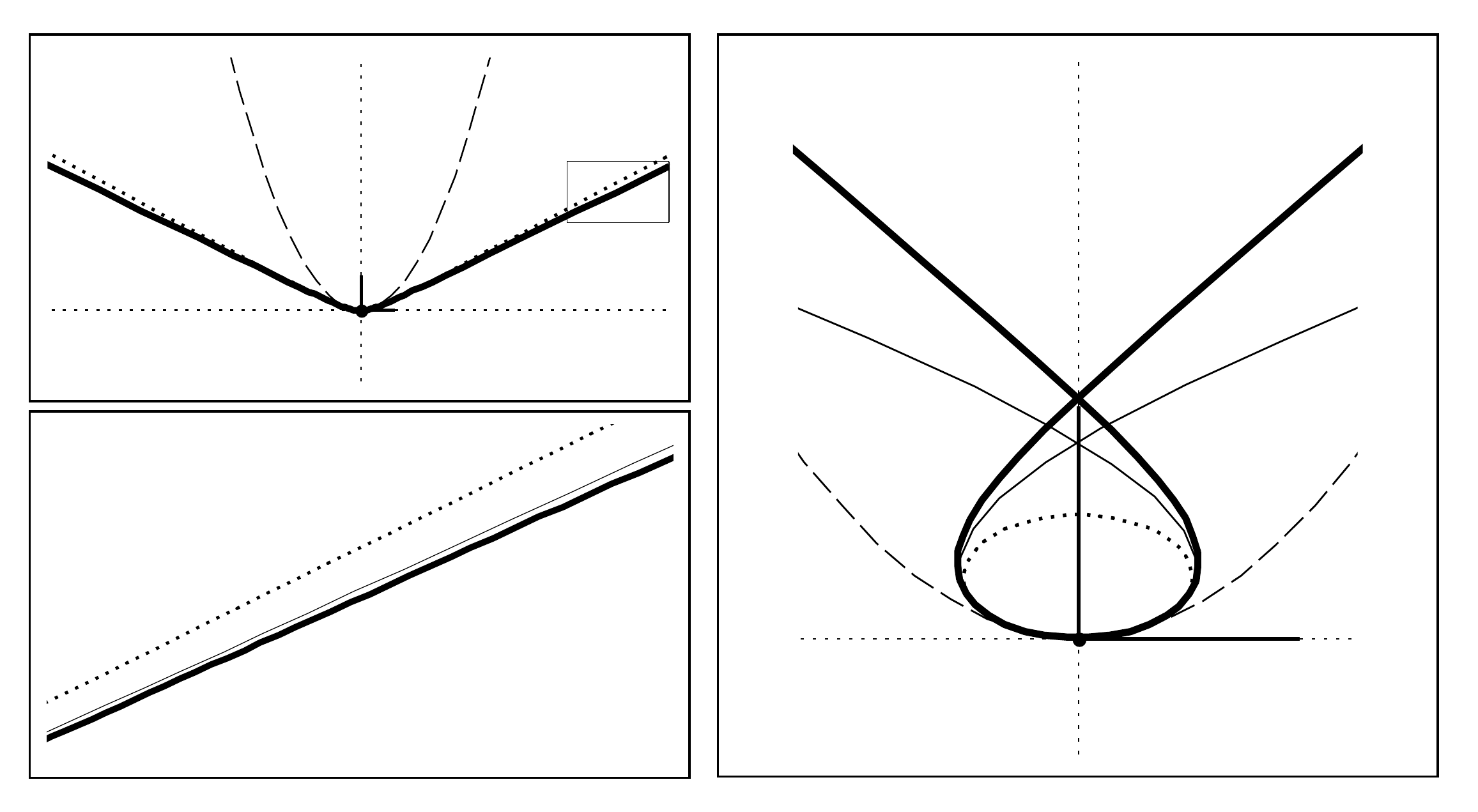}
\end{center}
\caption{Case (F). The top figure left displays such a curve for $g_3=1$, as well as its Frenet frame at the point of maximal curvature, and the osculating parabola and conic. The thin full line represents the curve of Case (D), whose parameter $E$ is adjusted in this way to have the same maximal curvature as $\gamma$. At the scale of the picture, the difference between these two curves is indiscernible, and for this reason a close-up of the small parallelogram which is indicated has been provided. Right: A curve corresponding to $g_3=-1$ (thick, solid line). The difference beween this curve and the curve of Case (E) (displayed by a thin line) which has the same osculating conic at $s=0$ and whose curvature has also a local maximum for $s=0$, is more easily recognised. The double-point of our curve curve does not coincide with the end-point of the Blaschke normal, but is emanated $1.0319$ times the Blaschke normal from the origin.}
\label{fig:bott}
\end{figure}

\noindent\framebox{\textsc{Case} (\text{F}). $g_2= 0$ and $g_3 \neq 0$.}\raisebox{-9pt}{\rule{0pt}{24pt}}\ \ The constant $c_0$ which appears in the natural equation $\kappa(s)=-6\,\wp(s-c_0)$ can be taken equal to zero without restriction, because $\Delta<0$. The curve is described by a suitable affine transformation of $(\zeta(s),\wp(s)-(\zeta(s))^2)$.

Since the parameter $g_3$ merely rescales the curves under consideration, which additionally will turn out to be graphically comparable to the previous two cases (of Figure~\ref{fig:III_II}), we will merely display such a curve for $g_3=-1$ and $g_3=1$ (see Figure~\ref{fig:bott}). 

\noindent\framebox{\textsc{Case} (\text{G}). 
$g_2=g_3=0$.}\raisebox{-9pt}{\rule{0pt}{24pt}}\ \ Apart from parabola's, we obtain the curves for which $(\kappa')^2/\kappa^{3}=2/3$,  and $\kappa(s)=-6/s^{2}$. These curves are equi-affinely congruent to $xy^4=1$.

This finishes our description of the critical points of $\int\kappa\,\dd s$ under area-contraint.

\begin{figure}
\begin{center}
\includegraphics[angle=0,width=0.32\textwidth,height=0.48\textwidth]{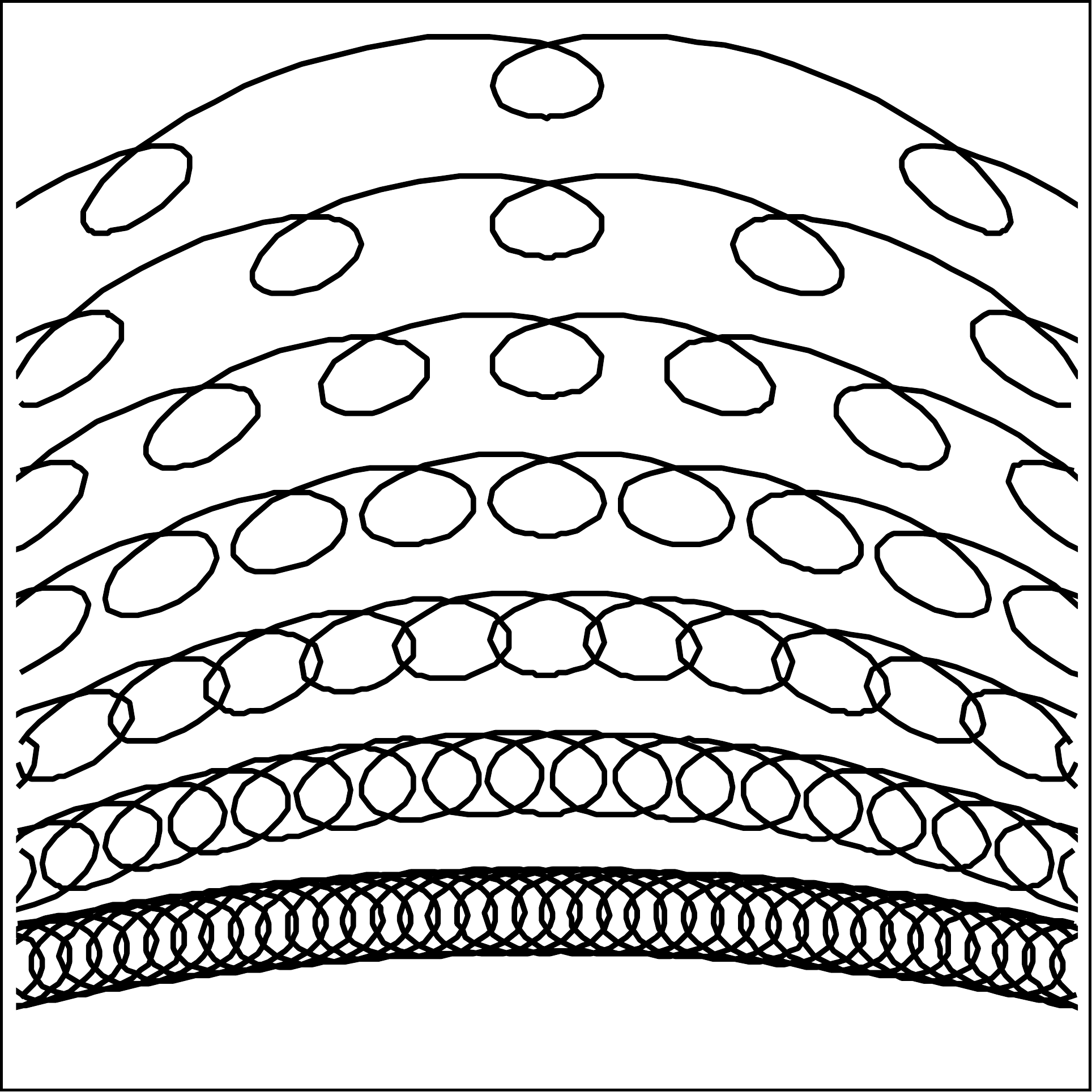}
\includegraphics[angle=0,width=0.32\textwidth,height=0.48\textwidth]{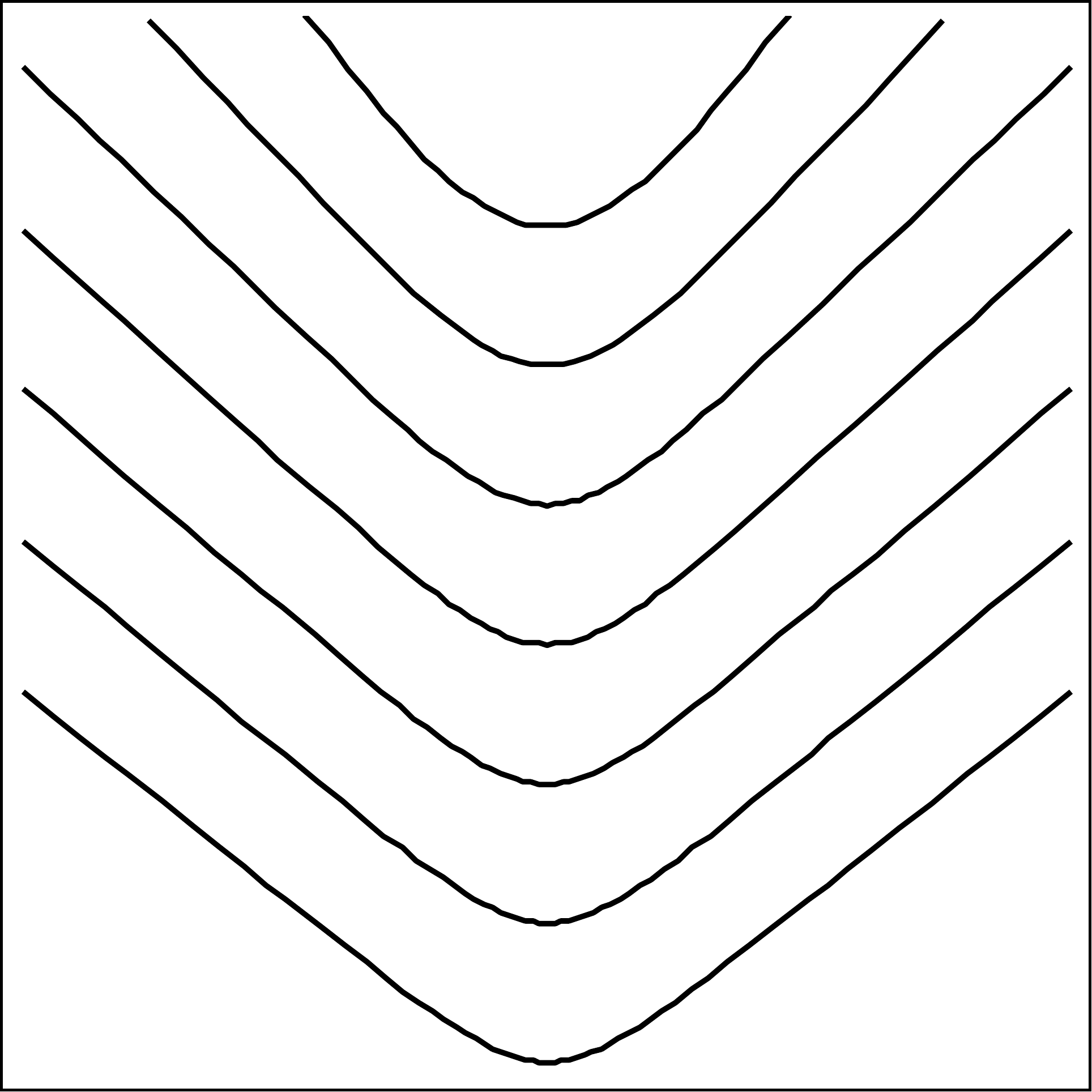}
\includegraphics[angle=0,width=0.32\textwidth,height=0.48\textwidth]{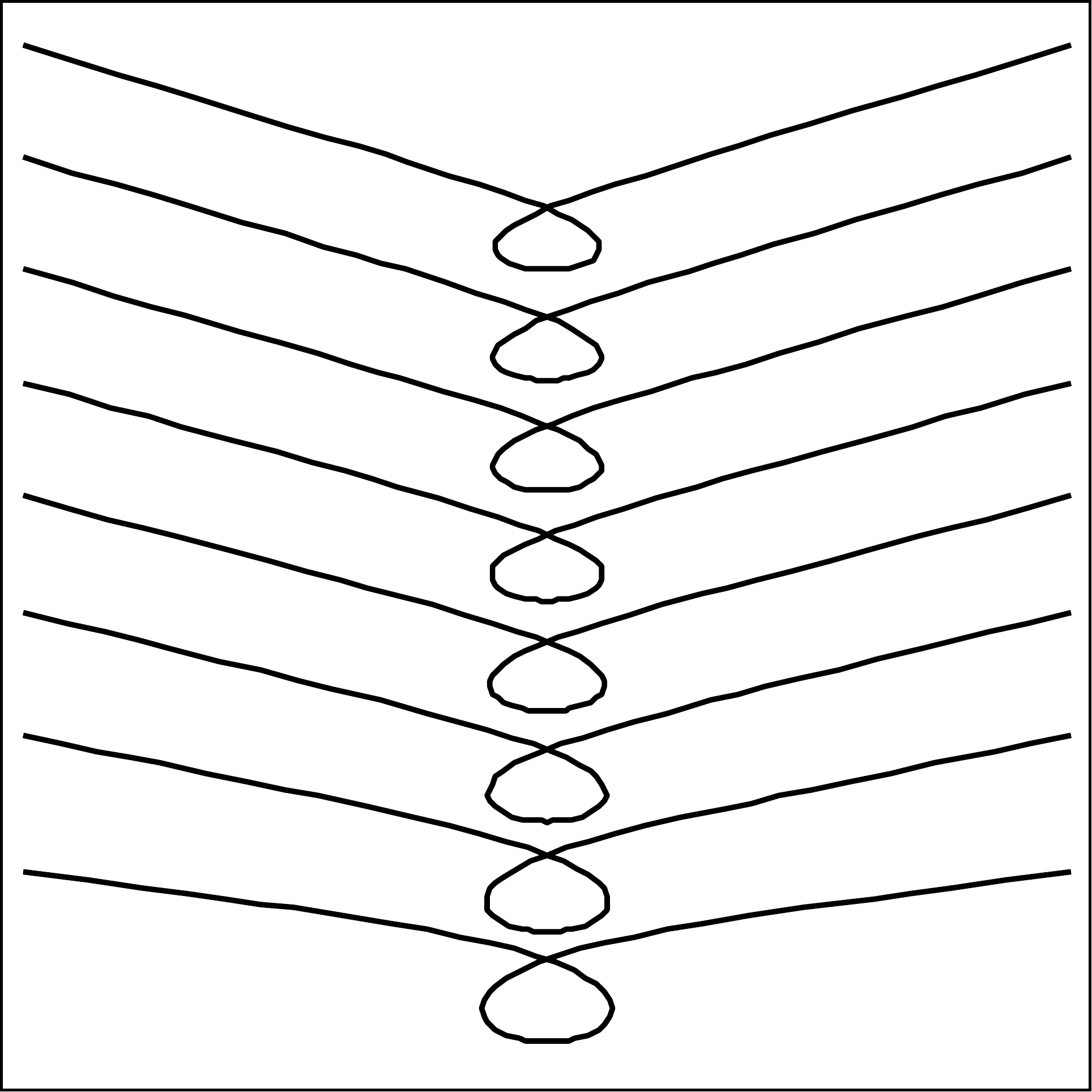}
\caption{Critical points of $\int \kappa\,\dd s$ under constrained $\int\!\dd s$: some examples with $A=1$. Left: from bottom to top, $B=-0.15,\ -0.10,\ldots,0.15$, with $c_0=\varpi_2$. Middle: similar, but $c_0=0$.  Right: $B=\frac{1}{6}+i$ ($i=1\ldots 8$, from bottom to top), and $c_0=0$.}
\label{fig:arclengthconstraint}
\end{center}
\end{figure}

\begin{figure}
\begin{center}
\includegraphics[angle=0,width=0.14\textwidth,height=0.28\textwidth]{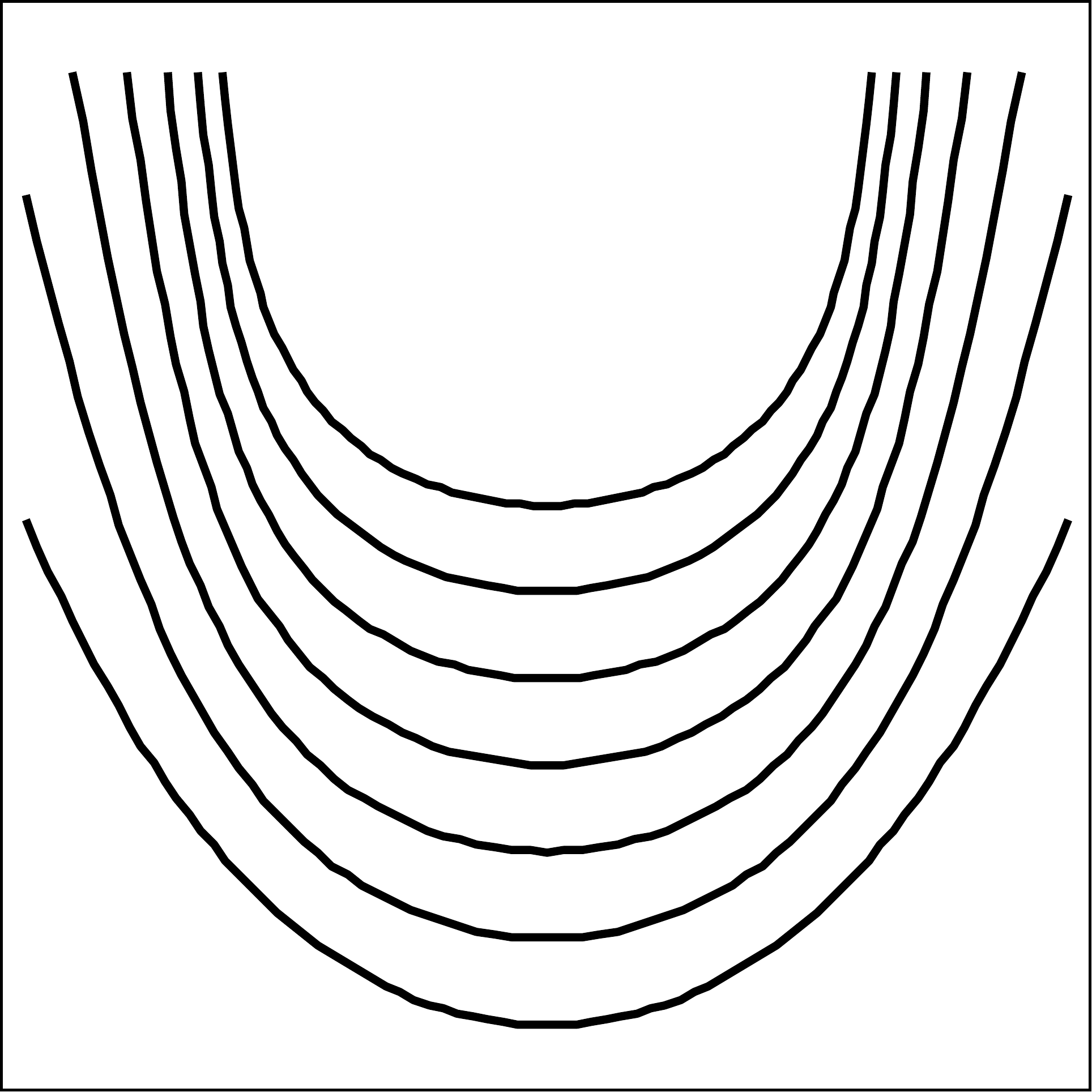}
\includegraphics[angle=0,width=0.56\textwidth,height=0.28\textwidth]{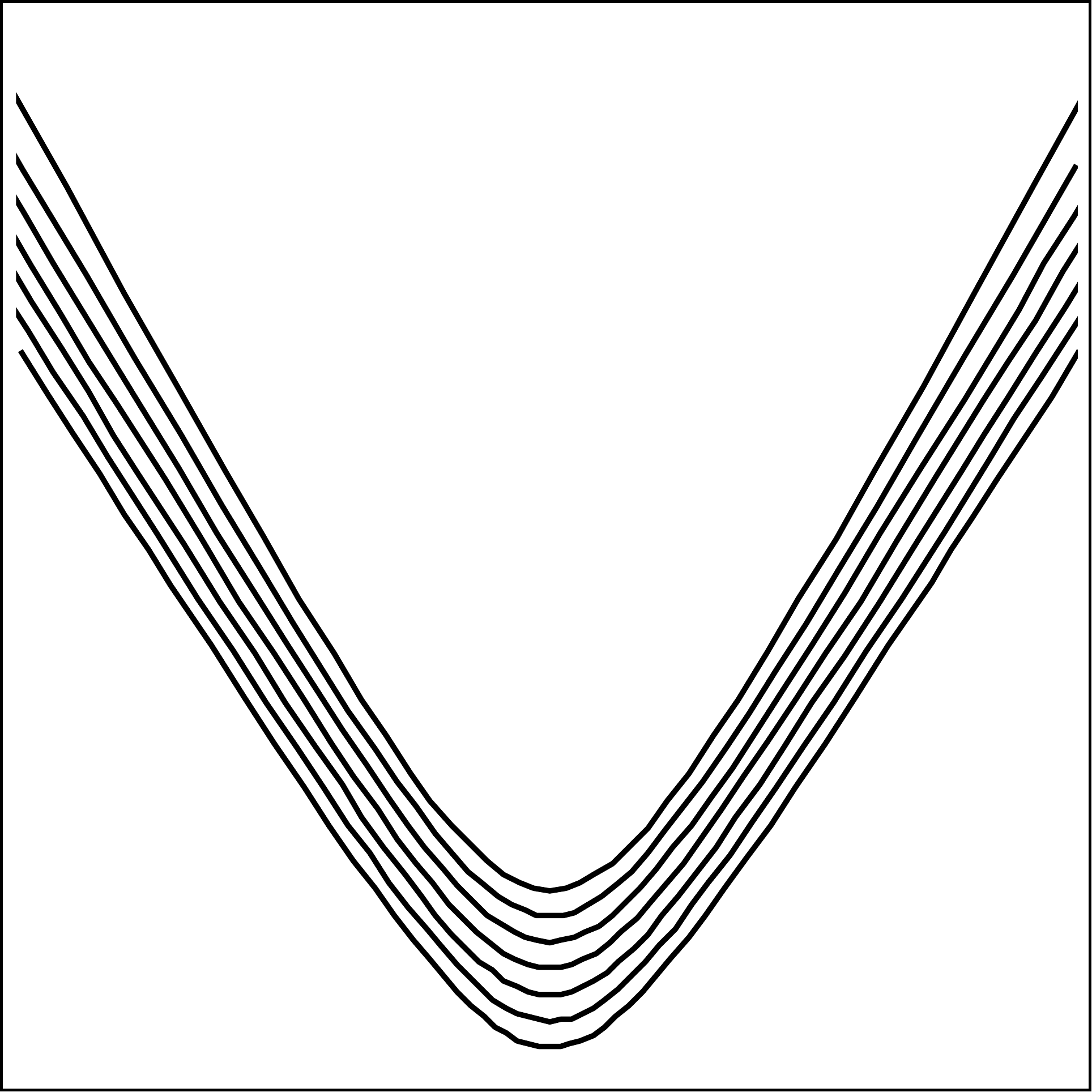}
\includegraphics[angle=0,width=0.28\textwidth,height=0.28\textwidth]{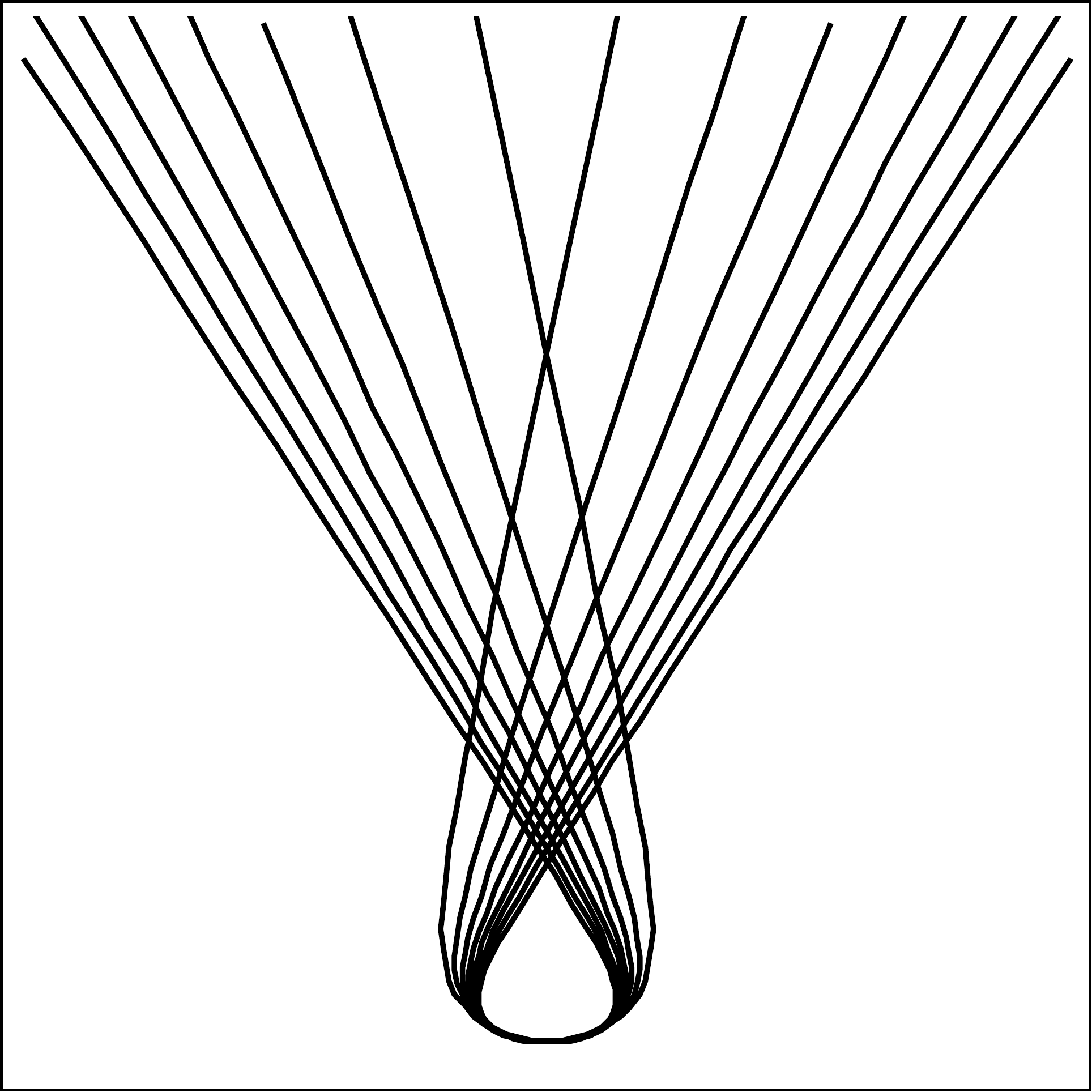}
\caption{Critical points of $\int \kappa\,\dd s$ under constrained $\int\!\dd s$: some examples with $A=-1$, for the same values of $B$ and $c_0$ as in the previous figure.}
\label{fig:arclengthconstraintcontd}
\end{center}
\end{figure}

\begin{remark}
It seems noteworthy to mention that the curves of case (F) have already been studied in another context. We recall that an affine-orthogonal net of curves is a pair of one-parameter families of curves for which at each point the tangent of the curve of the first family is the affine normal of the curve of the second family and conversely.
Now consider an affine-orthogonal net of curves which is invariant under the one-parameter family of translations generated by a vector $v$.
Then the curves for which the tangent is harmonically conjugate to $v$ w.r.t.\ the two tangents of the curves of the net precisely have natural equation 
$\kappa(s)=-6\,\wp(s)$ with $g_2=0$
(see \cite{bol1940}, pp.\ 147--152, and \cite{bottema1941}).
\end{remark}

\begin{remark}
The integration of the Euler--Lagrange equation~(\ref{eq:EL}), which characterises the critical points of the total equi-affine curvature 
w.r.t.\ deformations under which the total equi-affine arc-length is preserved, reveals that the function  $f=\frac{-\kappa}{6}+\frac{A}{12}$ satisfies the same equation $(f')^2=4\,f^3-g_2\,f-g_3$, where $g_2=\frac{A^2}{12}$ and $g_3$ stands for an integration constant. In the generic case when the corresponding cubic discriminant does not vanish, there holds $\kappa(s)=-6\,\wp(s-c_0)+\frac{A}{2}$, and the derivatives of the co-ordinate functions satisfy an adaption of the equation of Lam\'e, which can be integrated with the result
\[
\left\{
\begin{array}{rcl}
x(s)&=& -\zeta(s-c_0)+\frac{A}{12}\,s\,;\\
y(s)&=& \frac{A}{6}\,\zeta(s-c_0)\,(s-c_0)+\wp(s-c_0)
-\big(\zeta(s-c_0)\big)^2 - \left(\frac{A}{12}\,(s-c_0)\right)^2\,,
\end{array}
\right.
\]
up to an affine mapping. The constant $A$ can always be made 1, 0 or -1 by a suitable rescaling. 
We will not investigate this class of curves in detail but rather refer to Figures~\ref{fig:arclengthconstraint}--\ref{fig:arclengthconstraintcontd} where some instances of such curves are depicted.
\end{remark}


\section{The Functional $\displaystyle\int\sqrt{\kappa}\,\dd s$.}
\label{sec:sqrtcurv}

In this section, we will restrict our attention to \textit{strictly positively curved} curves, \textit{i.e.}, curves in the equi-affine plane with strictly positive equi-affine curvature. A functional which is of interest for such curves is given by $\int\sqrt{\kappa}\,\dd s$. 

A first motivation for the study of this functional is perhaps the fact that its Euclidean counterpart $\int \sqrt{\kappa_{\EE}} \,\dd s_{\EE}$ has been studied already in \cite{blaschke_vorlesungenI} \S\,27; in particular, the catenaries turned out to be the critical points of this functional.

But more importantly, the functional 
$\int\sqrt{\kappa}\,\dd s$ is an invariant with respect to the \textit{full affine} group. In the differential geometry of curves w.r.t.\ the full affine group, a \textit{full-affine arc-length} element and a \textit{full-affine curvature} have been defined 
(\cite{calugareanu_gheorghiu1941}; \cite{mihailescu_1}; \cite{schirokow} \S\,10.\textsc{v}; \cite{tutaev}). These are related with the equi-affine invariants by the formulae
\begin{equation}
\label{eq:aff_vs_full}
\dd s_{\FF}=\sqrt{\kappa}\,\dd s
\qquad\textrm{and}\qquad
\kappa_{\FF}=\frac{\kappa'}{2\,\kappa^{3/2}}\,.
\end{equation}

The class of curves for which the full-affine curvature is constant,
which includes for instance the logarithmical spirals, has been described already by 
Blaschke.\!\!\footnote{
See \cite[\S\,10]{blaschke_vorlesungenII}, but also \cite{bottema1941}, p.~91, footnote 2; \cite{leichtweiss}, p.~52, note 1.3.1; \cite{mihailescu_1}, \S\,5; \cite{tutaev}, p.~17, ex.~5.1.}
These curves, which will be called \textit{full-affine W-curves}, are precisely the orbits of a point under a one-parameter family of full-affine transformations.

It may be of interest to describe this full-affine arc-length and curvature without relying on equi-affine notions.
This is discussed in the four paragraphs (A)---(D) 
below.\!\!\footnote{The first three geometrical interpretations agree more or less with \cite{calugareanu_gheorghiu1941,mihailescu_1}. See also \cite{tutaev}.}

\noindent
\textbf{(A).}---On a curve $\gamma$ in the affine plane a point $M_1$ can be thought to approach a given point $M_0$. By intersection of the tangent and the affine normal at $M_1$ with the tangent at $M_0$ we obtain two points $A$ en $B$. This is illustrated in the left picture of Figure~\ref{fig:fullaff}, and according to the formula displayed below this picture the full-affine arc-length between $M_0$ and $M_1$ is a measure for the deviation of ${AM_0}$ from $BA$. It is immediately clear that this quantity vanishes for parabolas.

\noindent
\textbf{(B).}---A curve $\gamma$ has two points in common with its tangent line at a point $M_0$, four with its osculating parabola, and five with its osculating conic. There also uniquely exists a cubic which has a 
double-point at $M_0$ such that one branch is tangent to the affine normal whereas the other branch has six points in common with the curve. 
In the picture in the middle of Figure~\ref{fig:fullaff} the curve $\gamma$ is represented by the thick solid curve, and through the point $M_0$ of this curve have been erected the affine normal and the tangent. The osculating parabola, which follows the curve quite closely, is also drawn (full line), as is the osculating ellipse (dotted line) with centre $Q$. The aforementioned cubic (dashed) determines a unique point $A$ of intersection with the osculating parabola, disregarding $M_0$. 
Then, bringing the line $QA$ to intersection with the tangent at $M_0$ we obtained three collinear points from which the full-affine curvature $\kappa_{\mathbf{F}}$ at $M_0$ can be determined.

\noindent
\textbf{(C).}---For a point $M_0$ on the curve $\gamma$, consider the point $M_0^{\ast}$ which is antipodal to $M_0$ on its osculating ellipse. While $M_0$ traverses the curve $\gamma$, the point $M_0^{\ast}$ will describe a certain curve $\gamma^{\ast}$. In the right picture in Fig.~\ref{fig:fullaff} have been drawn in a dashed line the ellipses which are osculating at four points of the curve $\gamma$. If the line which is parallel to the tangent of $\gamma^{\ast}$ at $M_0^{\ast}$ is drawn through $M_0$, the points $P$, $Q$ of intersection with the osculating ellipse and the osculating parabola can be found. Then the full-affine curvature of the curve at the point $M_0$ is characterised by the ratio $\frac{QP}{PM_0}$ as well.

\noindent
\textbf{(D).}---Let us also give a new interpretation of the full-affine arc-length.

First, by a \textit{pointed parabola} will be understood a parabola in $\mathbb{A}^2$ of which a special point has been singled out. Because every two such parabolas can be matched by means of a unique equi-affine orientation-preserving transformation bringing the special points into correspondence, the space of all pointed parabolas can be simply seen as the equi-affine group, once a standard pointed parabola has been singled out.

\begin{figure}
\begin{center}
\includegraphics[bb=170 0 500 242,width=0.5\textwidth]{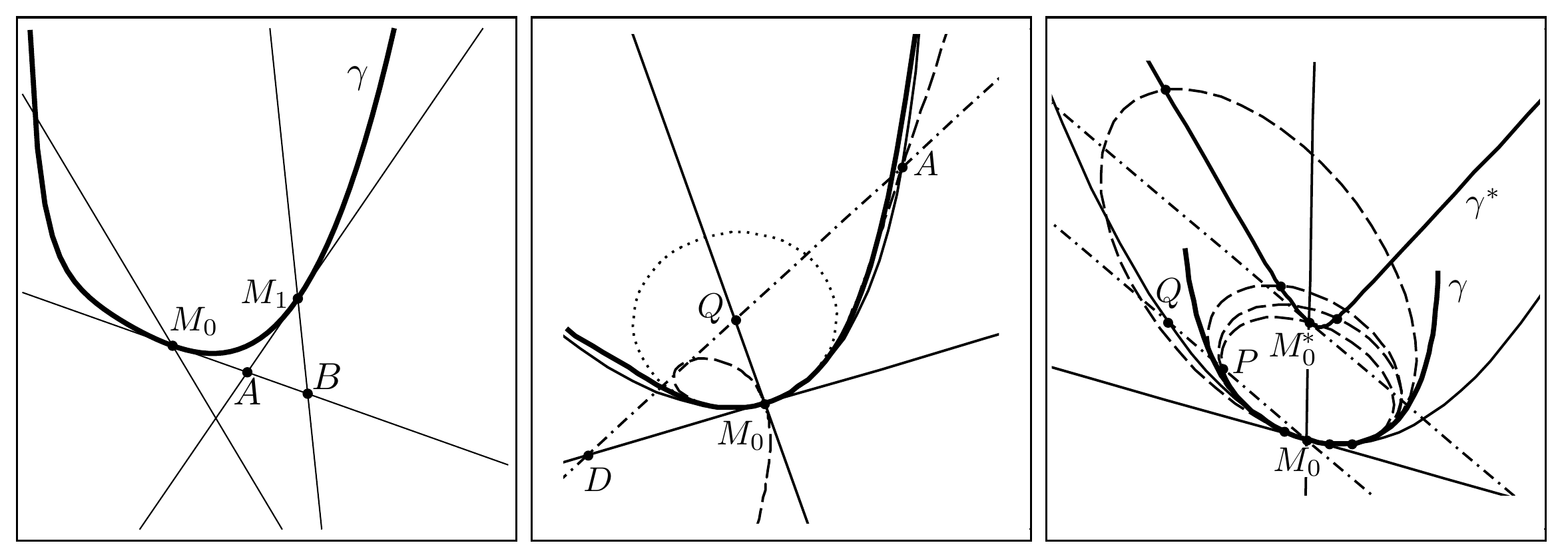}
\begin{tabular}{p{0.3\textwidth}p{0.3\textwidth}p{0.3\textwidth}}
\rule{0pt}{21pt}
$\big(\dd s_{\mathbf{F}}\big)_{01} \approx 
\sqrt{8\left(\frac{1}{2}-\frac{AM_0}{BM_0}\right)  }$.\rule{0pt}{21pt}
& 
\rule{0pt}{21pt}
$\big(\kappa_{\mathbf{F}}\big)_{0}=\pm \sqrt{ \frac{25}{8}\,\frac{QD}{AD}}$. 
\rule{0pt}{21pt}
& 
\rule{0pt}{21pt}
$\frac{QM_0}{PM_0} = 1 + 16\,\left[\big(\kappa_\textbf{F}\big)_0\right]^{2}$. 
\rule{0pt}{21pt}
\end{tabular}
\caption{Interpretations of the full-affine arc-length and curvature.}
\label{fig:fullaff}
\end{center}
\end{figure}

Next, we recall that a quadratic form $\varphi$ of index $q$  on $\mathbb{R}^{n+1}$ automatically endowes the hyperquadric $\varphi=-1$ with a pseudo-Riemannian metric of index $q-1$ by restriction to its tangent spaces. When this is applied to the Lie group 
$\mathrm{SL}(2)=\left\{
\left\lgroup
\begin{array}{cc}
a & b \\
c & d
\end{array}
\right\rgroup
\in\mathbb{R}^{2\times 2}\,\vert\,a\,d-\,b\,c=1\right\}$, where $\varphi=-\textrm{det}$ is regarded as a quadratic form, we conclude that the metric $g$ defined by
\[
g(v_P,v_P) = -\mathrm{det}(v_P)
\qquad\qquad\qquad
\textrm{(for $P\in\mathrm{SL}(2)$ and 
$v_P\in\mathrm{T}_P\mathrm{SL}(2)\subseteq\mathbb{R}^{2\times2}$),}
\]
which is obviously bi-invariant w.r.t.\ the group structure,
makes the special linear group into a pseudo-Riemannian space 
$(\mathrm{SL}(2),g)$ of Lorentzian signature which is isometric to the pseudohyperbolic space $H_1^3$ of constant sectional curvature $-1$.

Finally, for an arbitrarily parametrised curve $\gamma:\mathbb{R}\rightarrow \mathbb{A}^2: t \mapsto \gamma(t)$, the osculating parabola of $\gamma$ at $\gamma(t)$, endowed with the special point $\gamma(t)$, can be considered as an element of the equi-affine group. 
By omitting the translational part from this curve of osculating parabolas, a curve in a pseudo-Riemannian space results, which will be called the \textit{osculating parabolic congruence} of $\gamma$:
\[
\mathscr{P}_{\gamma} : \mathbb{R} \rightarrow (\mathrm{SL}(2),g)\,.
\]

We then have the following result, which is reminiscent to the interpretation of the Willmore integral of a surface in the conformal space as the area of its central sphere congruence (see \cite{blaschke_vorlesungenIII}, \S\,69 and 74.21):

\begin{theorem}
The full-affine arc-length of the curve $\gamma$ is precisely the pseudo-Riemannian arc-length of its osculating parabolic congruence $\mathscr{P}_{\gamma}$.
\end{theorem}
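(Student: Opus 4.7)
My plan is to realize $\mathscr{P}_{\gamma}$ as an explicit matrix-valued curve in $\mathrm{SL}(2)$ and then compute its Lorentzian speed directly from the equi-affine Frenet equations.

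First, I would fix a convenient standard pointed parabola, for example $u\mapsto(u,u^{2}/2)$ with the origin as special point. At this special point, $T=(1,0)^{\top}$ and $N=(0,1)^{\top}$ and their columns form the identity matrix in $\mathrm{SL}(2)$. Any other pointed parabola is the image of this standard one under a unique orientation-preserving equi-affine transformation matching the special points; its linear part lies in $\mathrm{SL}(2)$ because the Frenet frame at the special point always satisfies $|T,N|=1$. For the osculating parabola of $\gamma$ at $\gamma(s)$, this linear part is the $2\times 2$ matrix with columns $T(s)=\gamma'(s)$ and $N(s)=\gamma''(s)$. Omitting the translational part therefore gives
\[
\mathscr{P}_{\gamma}(s) \;=\; P(s) \;:=\; \bigl(\gamma'(s)\ \big|\ \gamma''(s)\bigr) \;\in\; \mathrm{SL}(2).
\]

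Next, differentiate $P$ using the equi-affine Frenet equations $T'=N$ and $N'=-\kappa\,T$, to obtain
\[
P'(s) \;=\; \bigl(\gamma''(s)\ \big|\ -\kappa(s)\,\gamma'(s)\bigr).
\]
Taking the determinant and using $|\gamma',\gamma''|=1$, one finds $\det P'(s) = -\kappa(s)\,\det\bigl(\gamma''(s),\gamma'(s)\bigr) = \kappa(s)$. Consequently the bi-invariant Lorentzian metric evaluates to $g(P'(s),P'(s)) = -\det P'(s) = -\kappa(s)$, so under the standing strict positivity $\kappa>0$ of \S\ref{sec:sqrtcurv} the tangent vector $P'(s)$ is timelike with Lorentzian length $\sqrt{-g(P',P')}=\sqrt{\kappa(s)}$. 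Integrating, and invoking the first identity of (\ref{eq:aff_vs_full}), yields
\[
\mathrm{length}\bigl(\mathscr{P}_{\gamma}\bigr) \;=\; \int \sqrt{\kappa(s)}\,\mathrm{d}s \;=\; \int \mathrm{d}s_{\FF},
\]
which is the claimed identity.

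The only substantive step is the initial identification of $\mathscr{P}_{\gamma}(s)$ with the explicit matrix $P(s)$ whose columns are the equi-affine Frenet vectors; once this is in hand, the rest reduces to a one-line calculation driven by the Frenet equations, and I do not foresee any serious obstacle.
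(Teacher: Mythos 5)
Your proof is correct, and in fact the paper states this theorem without giving a proof: your identification of the congruence with the Frenet-frame matrix $\mathscr{P}_{\gamma}(s)=\bigl(T(s)\ \vert\ N(s)\bigr)\in\mathrm{SL}(2)$, followed by $g(\mathscr{P}_{\gamma}',\mathscr{P}_{\gamma}')=-\det\mathscr{P}_{\gamma}'=-\kappa$ and integration of $\sqrt{\kappa}\,\dd s=\dd s_{\FF}$, is precisely the computation implicit in the paper's construction. It is also consistent with the paper's later remarks (e.g.\ that the osculating parabolic congruence of an ellipse is a timelike geodesic), and your appeal to the standing hypothesis $\kappa>0$ of \S\,\ref{sec:sqrtcurv} correctly handles the causal character of the tangent vector.
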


In order to illustrate the link between the pseudo-Riemannian geometry of 
$(\textrm{SL}(2),g)$ and the full-affine geometry of planar curves, we consider the following matrices which constitute a pseudo-orthonormal basis of $\mathrm{T}_{\textbf{1}}\mathrm{SL}(2)$:
\[
e_1 = 
\left\lgroup
\begin{array}{cc}
1 & 0 \\
0 & -1
\end{array}
\right\rgroup\,,
\quad
e_2 = 
\left\lgroup
\begin{array}{cc}
0 & 1 \\
1 & 0
\end{array}
\right\rgroup
\quad
\textrm{and}
\quad
e_3 = 
\left\lgroup
\begin{array}{cc}
0 & 1 \\
-1 & 0
\end{array}
\right\rgroup\,.
\]
The geodesics of $\mathrm{SL}(2)$ which start at the unit matrix in the directions $e_1$, $e_2$ or $e_3$ can be calculated by means of the matrix-exponential mapping, which is equal to the Riemann-exponential mapping because of the bi-invariance of the metric. These three curves can be considered as a one-parameter family of parabolas in $\mathbb{A}^2$, once a standard pointed parabola has been chosen.
In the three Figures~\ref{fig:geodpara} on p.\ \pageref{fig:geodpara}, this standard pointed parabola is displayed in thick, along with sixteen other parabolas which represent eight points on both sides of the corresponding geodesic in $\mathrm{SL}(2)$ at a distance of $\pi/20$ in-between. All drawn parabolas share their special point, which is situated in the middle of the figures.
As is clear from the left figure, not every curve in $\mathrm{SL}(2)$
can be realised as an osculating parabolic congruence from a curve in $\mathbb{A}^2$. For indeed the unit tangent vector field of such a hypothetical curve could only be horizontal, which would mean that the curve is a horizontal line, which does not admit osculating parabolas.
On the other hand, the curves of parabolas in the middle (resp.\ right) figure are the osculating parabolic congruence of a hyperbola (resp.\ an ellipse ).
The curve of parabolas in the right figure corresponds to a segment of a closed time-like geodesic of $\mathrm{SL}(2)$.

\begin{figure}
\begin{center}
\includegraphics[angle=0,width=0.32\textwidth]{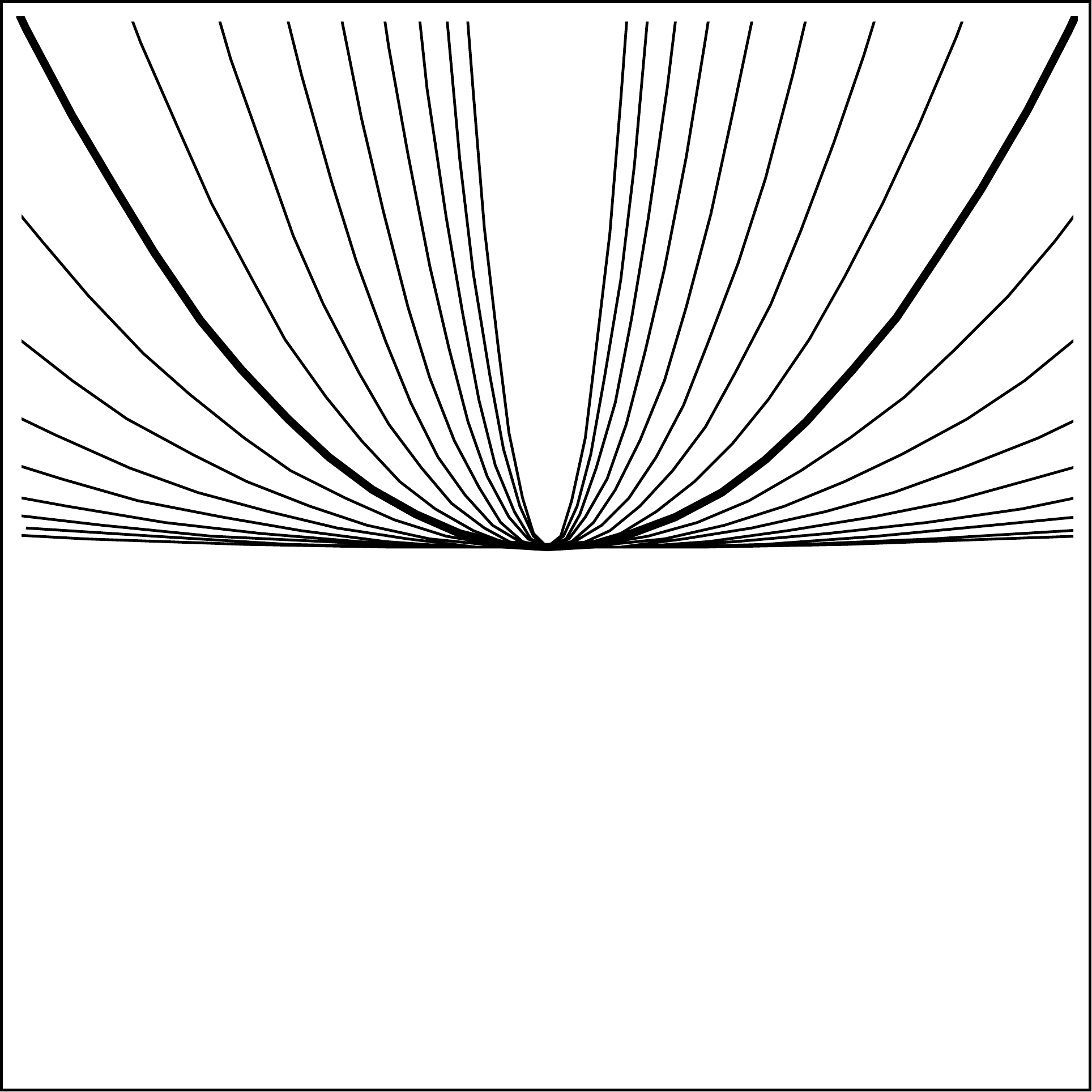}
\includegraphics[angle=0,width=0.32\textwidth]{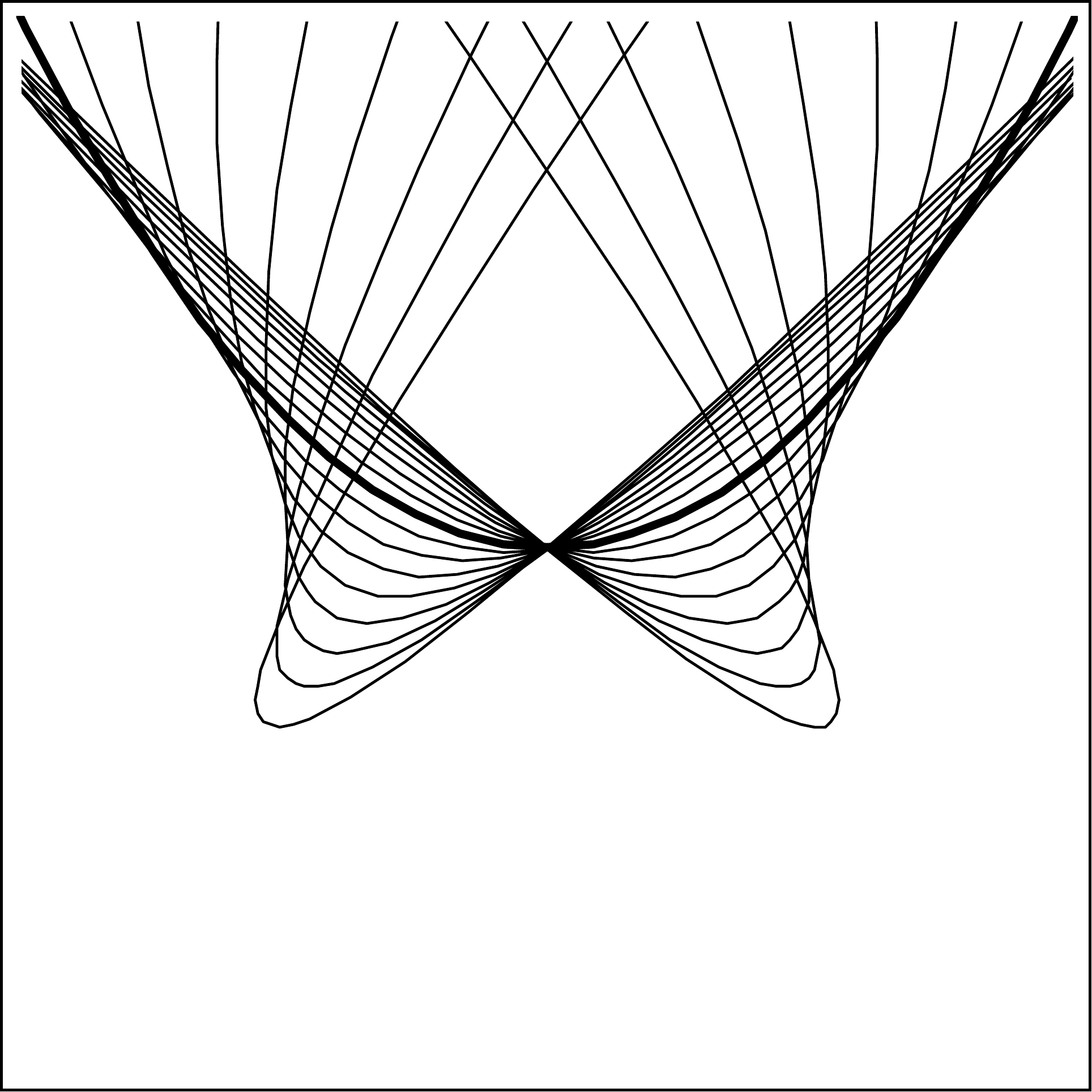}
\includegraphics[angle=0,width=0.32\textwidth]{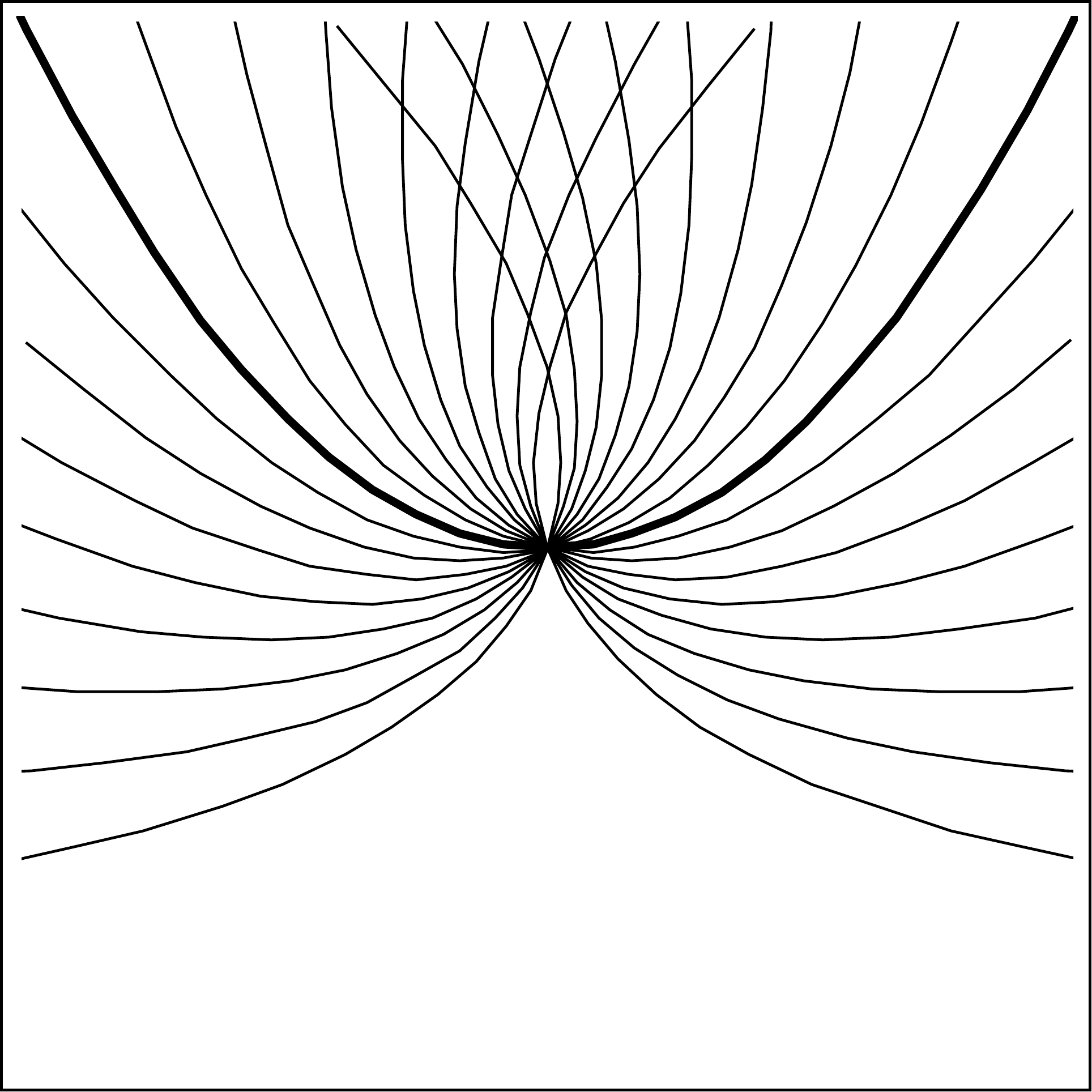}
\caption{Curves of parabolas corresponding to geodesics in $(\mathrm{SL}(2),g)$ emanating from the unit matrix in the directions $e_1$, $e_2$ and $e_3$, respectively.}
\label{fig:geodpara}
\end{center}
\end{figure}

\threeasterisks

Let us now return to the full-affine arc length from the variational point of view.
We remark that unlike the Euclidean and equi-affine variational formulae
\[
\left\{
\begin{array}{rclcl}
\delta \int \dd s_{\EE} &\propto& \int f\, \kappa_{\EE}\, \dd s_{\EE}
&\qquad& \textrm{(w.r.t. variational vector field $f\,N_{\EE} + g\,T_{\EE}$);}\\
\delta \int \dd s &\propto& \int f \,\kappa \,\dd s
&\qquad\rule{0pt}{14pt}& \textrm{(w.r.t. variational vector field $f\,N + g\,T$),}\\
\end{array}
\right.
\]
a similar formula does not hold for the full-affine geometry. In fact, the following variational formula w.r.t.\ the variational vector field $f\,N+g\,T$ can be deduced from (\ref{eq:delta_functional}):
\begin{equation}
\label{eq:deltasqrtkappa}
\delta\int\sqrt{\kappa}\,\dd s 
= 
-\int \frac{f}{12} \left\lgroup 
\left(\frac{\kappa'}{\kappa^{3/2}}\right)\!\!\rule{0pt}{14pt}'''\, 
+\kappa \left(\frac{\kappa'}{\kappa^{3/2}}\right)\!\!\rule{0pt}{14pt}'\,
\right\rgroup
\,\dd s\,.
\end{equation}

\begin{theorem}
\label{thm:intsqrt}
Among the strictly positively curved curves in the affine plane, the critical points of the full-affine arc-length are precisely the full-affine W-curves and the curves for which the full-affine curvature is a non-zero linear function of the position vector w.r.t.\ a suitable origin.
\end{theorem}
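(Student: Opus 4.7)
The plan is to start from the variational formula (\ref{eq:deltasqrtkappa}) and read off the Euler--Lagrange equation, which is
\[
\left(\frac{\kappa'}{\kappa^{3/2}}\right)''' + \kappa \left(\frac{\kappa'}{\kappa^{3/2}}\right)' = 0\,.
\]
The crucial observation is that this factors as $\left(\frac{\partial^2\ }{\partial s^2}+\kappa\right)\!\left[\left(\frac{\kappa'}{\kappa^{3/2}}\right)'\right]=0$, so one can invoke the fact, already used in Theorem~\ref{thm:intFkappa}, that every solution of the homogeneous equation $\left(\frac{\partial^2\ }{\partial s^2}+\kappa\right)\xi=0$ is of the form $\xi = A\,x' + B\,y'$. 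Hence
\[
\left(\frac{\kappa'}{\kappa^{3/2}}\right)' \,=\, A\,x' + B\,y'
\]
for some real constants $A$ and $B$.

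Next I would integrate once. Since $x'$ and $y'$ are themselves derivatives of the co-ordinate functions, this yields
\[
\frac{\kappa'}{\kappa^{3/2}} \,=\, A\,x + B\,y + D
\]
for some integration constant $D\in\mathbb{R}$. Using the identity $\kappa_{\FF}=\frac{\kappa'}{2\,\kappa^{3/2}}$ from (\ref{eq:aff_vs_full}), this is equivalent to saying that $2\,\kappa_{\FF}$ is an affine function of the position vector.

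Finally I would split into two cases. If $A=B=0$, then $\kappa_{\FF}$ is constant, which is precisely the definition of a full-affine W-curve. Otherwise $(A,B)\neq(0,0)$, and the translation of the origin by any vector $(x_0,y_0)$ satisfying $A\,x_0+B\,y_0=-D$ (which has solutions exactly because $(A,B)\neq(0,0)$) absorbs the constant $D$ and gives $\kappa_{\FF}=\frac{1}{2}(A\,x+B\,y)$, a non-zero linear function of the position vector w.r.t.\ the new origin. The converse — that every curve falling in one of these two families satisfies the Euler--Lagrange equation — is immediate by running the same differentiations backwards. The only step that really needs attention is the careful handling of the origin shift in the second case; the rest is a two-line reduction once one recognises the key factorisation.
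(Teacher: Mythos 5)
Your proposal is correct and follows essentially the same route as the paper: the paper likewise reads off the Euler--Lagrange equation $(\kappa_{\FF})'''+\kappa\,(\kappa_{\FF})'=0$ and notes that $x$, $y$, $1$ span the kernel of $\xi\mapsto\xi'''+\kappa\,\xi'$, which is exactly your factorisation through $\bigl(\frac{\partial^2\ }{\partial s^2}+\kappa\bigr)$ followed by one integration. The concluding case split ($A=B=0$ giving constant $\kappa_{\FF}$, i.e.\ a full-affine W-curve, versus a translation absorbing the constant) matches the paper's argument as well.
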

\begin{proof} 
In view of the variational formula (\ref{eq:deltasqrtkappa}), the equi-affine curvature function $\kappa$ and the full-affine curvature function $\kappa_{\FF}$ of such a curve are related by
\begin{equation}
\label{eq:kappakappaF}
(\kappa_{\FF})''' + \kappa \, (\kappa_{\FF})'=0\,.
\end{equation}
According to (\ref{eq:defkappa}), the functions $x$, $y$ and $1$ span the set of solutions of the homogeneous third-order differential equation $\xi'''+\kappa\,\xi'=0$, and therefore we have $\kappa_{\textbf{F}}= A\,x+B\,y+C$.
If $A$ or $B$ is not equal to zero, a suitable translation of the co-ordinate system reduces this equation to the form $\kappa_{\mathbf{F}}=A\,x+B\,y$.  
\end{proof}

\begin{theorem}
(i). The ellipses are the only simple closed strictly positively curved curves
which are a critical point of $\int \sqrt{\kappa}\,\dd s$.

\noindent
(ii). The ellipses are the only simple closed strictly positively curved curves which are a critical point of $\int \sqrt{\kappa}\,\dd s$ under area constraint.

\noindent
(iii). The ellipses are the only simple closed strictly positively curved curves which are a critical point of $\int \sqrt{\kappa}\,\dd s$ under equi-affine arc-length constraint.

\noindent
(iv). The ellipses are the only simple closed strictly positively curved curves which are a critical point of $\int \sqrt{\kappa}\,\dd s$ under total equi-affine curvature constraint.
\end{theorem}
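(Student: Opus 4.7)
The plan is to handle all four parts uniformly by studying the third-order operator $L\xi = \xi''' + \kappa\xi'$ appearing in (\ref{eq:kappakappaF}), together with its formal adjoint $L^\ast\eta = -\eta''' - (\kappa\eta)'$. Using the variational formula (\ref{eq:deltasqrtkappa}), together with $\delta\,\mathrm{Area} = -\int f\,\dd s$, the formula $\delta\!\int\!\dd s = -\tfrac{2}{3}\!\int\! f\kappa\,\dd s$ (which follows from (\ref{eq:delta_functional}) with $F \equiv 1$), and (\ref{eq:delta_int_kappa}), each of the constrained Euler--Lagrange equations may be written in the unified form $L\kappa_{\FF} = R$, where $R = 0$ in case (i), $R$ equals a constant $c_1$ in case (ii), $R = c_2\kappa$ in case (iii), and $R = c_3(\kappa'' + \kappa^2)$ in case (iv), for suitable Lagrange multipliers $c_k$.

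Integration by parts on a closed curve produces the fundamental identity
\[
\int \eta\,L\kappa_{\FF}\,\dd s \;=\; -\int\bigl[\eta''' + (\kappa\eta)'\bigr]\,\kappa_{\FF}\,\dd s.
\]
I would evaluate this on two special test functions. With $\eta = \rho$, relation (\ref{eq:rho_ode}) gives $\rho''' + (\kappa\rho)' = \rho''' + (1-\rho'')' = 0$, whence $\int \rho\, R\,\dd s = 0$. With $\eta = 1$, the bracket reduces to $\kappa'$, so
\[
\int L\kappa_{\FF}\,\dd s \;=\; -\int\kappa'\,\kappa_{\FF}\,\dd s \;=\; -\int\frac{(\kappa')^2}{2\,\kappa^{3/2}}\,\dd s.
\]

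The first identity eliminates the Lagrange multiplier. In case (ii) it reads $c_1\!\int\!\rho\,\dd s = 0$, and the classical relation $\int\rho\,\dd s = 2\,\mathrm{Area}$ forces $c_1 = 0$. In case (iii), using $\kappa\rho = 1-\rho''$, one finds $\int \rho\kappa\,\dd s = L$, so $c_2 L = 0$ gives $c_2 = 0$. In case (iv), two integrations by parts together with (\ref{eq:rho_ode}) yield $\int \rho(\kappa''+\kappa^2)\,\dd s = \int \kappa(\rho''+\kappa\rho)\,\dd s = \int\kappa\,\dd s$, so $c_3 = 0$. For a simple closed strictly positively curved curve each of the three test integrals is manifestly strictly positive.

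With $R \equiv 0$, the second identity reduces to $\int (\kappa')^2/(2\kappa^{3/2})\,\dd s = 0$, and since $\kappa > 0$ this forces $\kappa' \equiv 0$. A curve of constant positive equi-affine curvature is an ellipse, which conversely trivially satisfies $L\kappa_{\FF} = 0$ and is therefore a critical point of all four variational problems. The main obstacles are keeping track of the signs of the Lagrange multipliers when passing from the variational formulas to $L\kappa_{\FF} = R$, and checking that the three identities $\int \rho\,\dd s = 2A$, $\int\rho\kappa\,\dd s = L$, $\int\rho(\kappa'' + \kappa^2)\,\dd s = \int\kappa\,\dd s$ are independent of the choice of origin, which follows from the fact that $Ax' + By'$ solves the homogeneous equation $\xi''+\kappa\xi=0$.
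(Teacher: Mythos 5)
Your argument is correct, but it takes a genuinely different route from the paper's. The paper proves (i) geometrically: by Theorem~\ref{thm:intsqrt} a non-elliptical critical point would satisfy $\kappa_{\FF}=A\,x+B\,y$ with $(A,B)\neq(0,0)$, so the line $A\,x+B\,y=0$ would permit at most two sextactic points, contradicting the affine six-vertex theorem (Blaschke, \S\,19); parts (ii)--(iv) are then reduced to (i) by exhibiting the particular solutions $Q\,R$, $Q\,s$, $Q\,K$ of the inhomogeneous equation ($R$, $K$ primitives of $\rho$, $\kappa$) and noting that these are non-periodic unless $Q=0$, while $\kappa_{\FF}+A\,x+B\,y+C$ is periodic. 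You instead exploit the operator $L\xi=\xi'''+\kappa\,\xi'$ with two test functions: $\rho$, which is annihilated by the formal adjoint precisely because $\rho''+\kappa\,\rho=1$, eliminates the multiplier in (ii)--(iv) since the pairings $\int\rho\,\dd s=2\,\mathrm{Area}$, $\int\rho\,\kappa\,\dd s=\int\dd s$ and $\int\rho\,(\kappa''+\kappa^{2})\,\dd s=\int\kappa\,\dd s$ are all nonzero; and $\eta=1$, which converts $L\kappa_{\FF}=0$ into $\int(\kappa')^{2}/(2\,\kappa^{3/2})\,\dd s=0$, hence $\kappa$ constant and the curve an ellipse. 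The two mechanisms are dual (the paper uses that a primitive of $\rho$ solves $L\xi=1$, you use that $\rho$ lies in the kernel of $L^{\ast}$), but the payoffs differ: your proof is uniform over the four cases and settles (i) by a one-line positivity argument, avoiding both the classification of closed full-affine W-curves and the appeal to the six-sextactic-point theorem, whereas the paper's route, at the cost of invoking those results, ties the theorem to the geometric classification of Theorem~\ref{thm:intsqrt}. Two small points deserve an explicit word in a final write-up: only the nonvanishing of $\int\rho\,\dd s$ is needed (so the orientation convention in $\int\rho\,\dd s=2\,\mathrm{Area}$ is harmless, or one may simply place the origin inside the oval so that $\rho>0$), and the closing step uses the standard fact that a closed curve of constant positive equi-affine curvature is an ellipse.
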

\begin{proof}
(i).
Assume that a simple closed strictly positively curved curve is a critical point of $\int \sqrt{\kappa}\,\dd s$ but is not an ellipse.

Because the ellipses are the only closed strictly positively curved 
full-affine W-curves, there holds $\kappa_{\mathbf{F}}=A\,x+B\,y$, where at least one of the constants $A$ or $B$ is non-zero. The equation $A\,x+B\,y=0$ defines a straight line in the affine plane which cuts the curve in at most two points, and we conclude that there will be at most two critical points of the equi-affine curvature, \textit{i.e.}, \textit{sextactic points}, which contradicts an adaption of the four-vertex theorem (see \cite{blaschke_vorlesungenII}, \S\,19).

\noindent
(ii). Assume that a simple closed strictly positively curved curve is a critical point of $\int \sqrt{\kappa}\,\dd s$ under area constraint, but is not an ellipse. The curve $\gamma=(x,y)$ will be described  w.r.t. a co-ordinate system which makes the support function $\rho$ strictly positive.
The Euler-Lagrange equation expresses  that 
\begin{equation}
\label{eq:kappaQ}
(\kappa_{\FF})''' +\kappa\, (\kappa_{\FF})' = Q
\qquad\qquad \textrm{(for some constant $Q$),}
\end{equation}
where $\kappa_{\FF}=
\frac{\kappa'}{2\,\kappa^{3/2}}$ as in (\ref{eq:aff_vs_full}) and primes denote derivatives w.r.t.\ $s$. 
If $R$ is a local primitive of the equi-affine support function $\rho$ then $Q\,R$ is a solution of this linear inhomogeneous differential equation for $\kappa_{\FF}$, whereas the solution of the corresponding homogeneous differential equation was described in the proof of theorem \ref{thm:intsqrt}.

Therefore the full-affine curvature function $\kappa_{\FF}$ of the curve has to satisfy
\begin{equation}
\label{eq:kappaQR}
\kappa_{\FF}+ A\,x+B\,y+C = Q\,R 
\end{equation}
for some constants $A$, $B$ and $C$. The left-hand side of the above equation is the sum of four periodic functions, and hence the right-hand side should also be periodic. However $R'=\rho$ does not change sign, and consequently there necessarily holds $Q=0$, which means that we are in the above case (i).

\noindent
(iii). This is similar to the previous case. Instead of the formulae (\ref{eq:kappaQ}) and (\ref{eq:kappaQR}) one should use, respectively,
\[
(\kappa_{\FF})'''+\kappa\,(\kappa_{\FF})' = Q\,\kappa
\qquad
\textrm{and}
\qquad
\kappa_{\FF}+ A\,x+B\,y+C = Q\,s \,.
\]
(iv). Here one should use, respectively,
\[
(\kappa_{\FF})'''+\kappa\,(\kappa_{\FF})' = Q\,\left(\kappa'' + \kappa^2 \right)
\qquad
\textrm{and}
\qquad
\kappa_{\FF}+ A\,x+B\,y+C = Q\,K \,,
\]
where $K$ is a local primitive of $\kappa$ (and hence non-periodic).
\end{proof}

\begin{remark}
From (\ref{eq:intkappaints}), we immediately obtain a \textit{full-affine isoperimetric inequality}
\[
\int \dd s_{\mathbf{F}} \leqslant 2\,\pi
\]
for simple closed strictly positively curved curves, in which equality is attained exactly for  ellipses (cf.\ \cite{heil1967}, \S\,10). 
Furthermore, the total full-affine curvature vanishes for every closed curve:
\[
\int \kappa_{\mathbf{F}}\,\dd s_{\mathbf{F}} 
= \frac{1}{2}\int \frac{\kappa'}{\kappa} \dd\,s
= \frac{1}{2}\int \dd \log\,\kappa =  0.
\]
\end{remark}

\begin{remark}
The  Euler--Lagrange equation (\ref{eq:kappakappaF}) can be written completely in full-affine invariants: 
\begin{equation}
\label{eq:Miha}
\frac{\partial^3 \kappa_{\FF}  }{\partial (s_{\FF})^3}  
+
3\,\kappa_{\FF}
\frac{\partial^2 \kappa_{\FF}  }{\partial (s_{\FF})^2}  
+
\left(
\frac{\partial \kappa_{\FF}  }{\partial s_{\FF}}  
\right)^2
+
\left(
2\,(\kappa_{\FF})^2+1
\right)\,\frac{\partial \kappa_{\FF}  }{\partial s_{\FF}}  
=0 \,.
\end{equation}
This equation, which \textit{should} have been obtained in 
\cite[eq.\ (33)]{mihailescu_2}, has been contended to
lack the simplicity of the Euler--Lagrange equation for the critical points of the projective arc-length functional which was derived by \'E. Cartan (\cite{cartan_1927}, eq. (16)):
\[
\frac{\partial^3 \kappa_{\PP}  }{\partial (s_{\PP})^3}  
+ 8 \,\kappa_{\PP} \,
\frac{\partial \kappa_{\PP}  }{\partial s_{\PP}} 
=0\,,
\quad
\textrm{or equivalently (for some $C\in\mathbb{R}$),}
\quad
\frac{\partial^2 \kappa_{\PP}  }{\partial (s_{\PP})^2}  
+ 4 \,(\kappa_{\PP})^2
=C\,.
\]
In this article we have seen 
in (\ref{eq:odekappa}) as well as in (\ref{eq:kappakappaF}) 
an Euler--Lagrange equation which is similar to Cartan's equation.
\end{remark}

\begin{figure}
\begin{center}
\includegraphics[angle=0,width=0.48\textwidth]{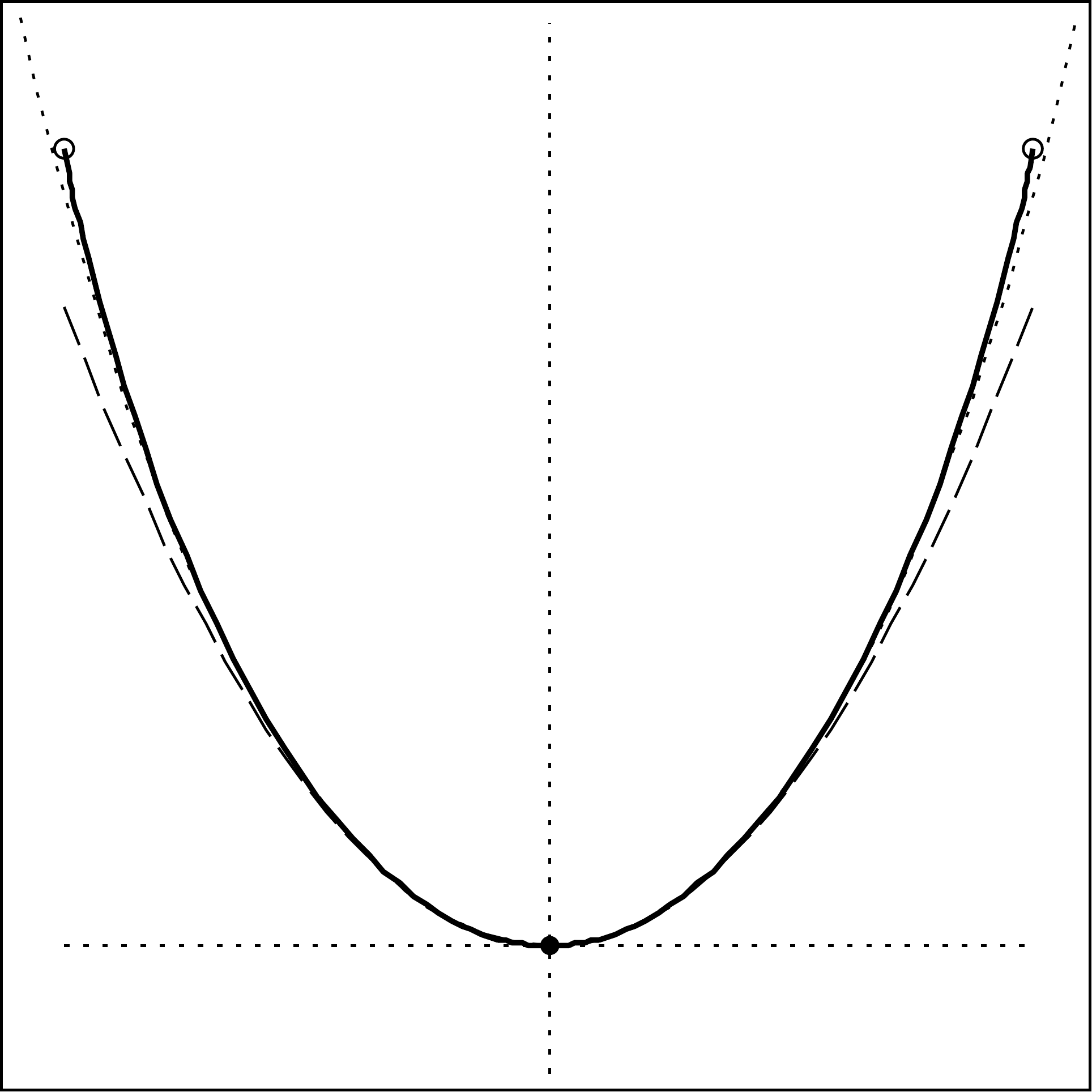}
\caption{The solid line represents the curve from Example~\ref{ex:fullaffineminimalcurve}. At the black dot, which represents the point where $\kappa_{\FF}$ vanishes, the tangent line and the affine normal line are shown, as well as the corresponding osculating parabola (dashed). An arc of the over-osculating ellipse at this point is drawn in a dotted line. As follows from the asymmetry of $\kappa_{\FF}$, the curve is symmetric around the point $s_{\FF}=0$. The point with parameter $s_{\FF}$ approaches one of the two end-points indicated by small circles when $s_{\FF}$ tends towards 
$\pm\infty$.
}
\label{fig:fullaffine}
\end{center}
\end{figure}

\begin{example}
\label{ex:fullaffineminimalcurve}
Let us end with an example: The curve given by $\kappa_{\FF}(s_\FF)=\frac{3}{\sqrt{2}}\mathrm{tanh}(\sqrt{2}\,s_{\FF})$ satisfies eq.~(\ref{eq:Miha}). This curve has been drawn in Figure~\ref{fig:fullaffine}. 
\end{example}


\noindent{\footnotesize\textsc{Previous Address:}
K.U.Leuven, Departement Wiskunde, Celestijnenlaan 200B, 3001 He\-verlee, Belgium.\\
\textsc{Previous Address:} 
Masaryk University, Department of Mathematics and Statistics, Kotl\'a\v{r}sk\'a 2, 611~37 Brno, Czech Republic.}

\end{document}